\theoremstyle{plain}
\newtheorem{theorem}{Theorem}[section]
\newtheorem*{corollary*}{Corollary}
\newtheorem{lemma}[theorem]{Lemma}
\newtheorem{proposition}[theorem]{Proposition}
\newtheorem{corollary}[theorem]{Corollary}
\newtheorem{fact}[theorem]{Fact}
\newtheorem{claim}[theorem]{Claim}
\theoremstyle{definition}
\newtheorem{definition}[theorem]{Definition}
\newtheorem{remark}[theorem]{Remark}
\newcommand{\B}{\mathcal{B}}
\newcommand{\C}{\mathcal{C}}
\DeclareMathOperator{\Orb}{Orb}
\DeclareMathOperator{\Tr}{Tr}
\DeclareMathOperator{\im}{im}
\DeclareMathOperator{\supp}{supp}
\DeclareMathOperator{\Iso}{Iso}
\newcommand{\cstar}{\ensuremath{\mathrm{C}^{*}}}
\newcommand{\ch}{\ensuremath{\mathds{1}}}
\newcommand{\Ko}{\ensuremath{\mathrm{K}_{0}}}
\newcommand{\Gunit}{\ensuremath{G^{(0)}}}
\newcommand{\Hunit}{\ensuremath{H^{(0)}}}
\begin{document}

\title[A dichotomy for groupoid \cstar-algebras]{A dichotomy for groupoid \cstar-algebras}
\author[T.\ Rainone and A.\ Sims]{Timothy Rainone and Aidan Sims}
\date{\today}
\address{School of Mathematics and Statistics, Arizona State University, Tempe, Arizona, 85287}
\email{trainone@asu.edu}
\address{School of Mathematics and Applied Statistics, University of Wollongong, NSW 2522, Australia}
\email{asims@uow.edu.au}

\subjclass[2010]{Primary: 46L05} \keywords{\cstar-algebras, groupoid, groupoid
\cstar-algebra, purely infinite \cstar-algebra}
\date{\today}
\thanks{This research was supported by the Australian Research Council grant DP150101595.
This research project was initiated when the authors were both at the Centre de Recerca
Mathematica, Universitat Aut\'onoma de Barcelona as part of the Intensive Research
Program \emph{Operator algebras: dynamics and interactions} in 2017, and the work was
significantly supported by the research environment and facilities provided there.}

\begin{abstract}
We study the finite versus infinite nature of $C^*$-algebras arising from \'etale
groupoids. For an ample groupoid $G$, we relate infiniteness of the reduced $C^*$-algebra
of $G$ to notions of paradoxicality of a K-theoretic flavor.  We construct a pre-ordered
abelian monoid $S(G)$ which generalizes the type semigroup introduced by R{\o}rdam and
Sierakowski for totally disconnected discrete transformation groups. This monoid reflects
the finite/infinite nature of the reduced groupoid $C^*$-algebra of $G$. If $G$ is ample,
minimal, and topologically principal, and $S(G)$ is almost unperforated we obtain a
dichotomy between stable finiteness and pure infiniteness for the reduced $C^*$-algebra
of $G$.
\end{abstract}

\maketitle

\section{Introduction}

The groupoid \cstar-algebra construction has been a very fruitful unifying notion in the
theory of operator algebras since Renault's pioneering monograph~\cite{Ren}. Groupoid
\cstar-algebras include all group \cstar-algebras, all crossed products of commutative
\cstar-algebras by actions and partial actions of groups, inverse-semigroup
\cstar-algebras, AF algebras, and the various Cuntz--Krieger constructions. Even in the
seemingly restrictive case of ample groupoids it is known that every Kirchberg algebra
that satisfies the Universal Coefficient Theorem is Morita equivalent to the
\cstar-algebra associated to a Hausdorff ample groupoid (see~\cite{Sp1}). Groupoids and
associated operator algebras have also been used by Connes and others as models for
noncommutative topological spaces.

Here we study, for a large class of \'etale groupoids, the notions of finiteness,
infiniteness, and proper infiniteness, the latter expressed in terms of paradoxical
decompositions. Tarski's alternative theorem establishes, for discrete groups, the
dichotomy between amenability and paradoxicality. This divide carries over to geometric
operator algebras. For example, R{\o}rdam and Sierakowski showed \cite{RS} that if a
discrete group $\Gamma$ acts on itself by left-translation, the Roe algebra
$C(\beta\Gamma)\rtimes_{\lambda}\Gamma$ is properly infinite if and only if $\Gamma$ is
paradoxical, which is equivalent to the non-amenability of $\Gamma$~\cite{RS}. In the
W$^*$-setting, we know that all projections in a II$_1$ factor are finite and that the
ordering of Murray-von-Neumann subequivalence is determined by a unique faithful normal
tracial state. By contrast, type~III factors admit no traces since all non-zero
projections therein are properly infinite. The analogous dichotomy fails for
\cstar-algebras in general as shown by R{\o}rdam's construction~\cite{Ro1} of a unital,
simple, separable and nuclear algebra which is neither stably finite or purely infinite
(the \cstar-algebraic analog of type~III). It is still not known whether the stably
finite/purely infinite dichotomy holds for \cstar-algebras generated by projections. Here
we give a partial answer to this question for \cstar-algebras associated to ample
groupoids.

The motivation for such a dichotomy comes from Elliott's program of classification of
simple, separable, nuclear \cstar-algebras by K-theoretic invariants. In the stably
finite case the Elliott program has seen stunning advances in recent years
\cite{ElliottGongLinNiu, GongLinNiu, RordamYellowBook, TWW, WinterTAF}. In the purely
infinite setting, complete classification was achieved in the groundbreaking results of
Kirchberg~\cite{Kir} and Phillips~\cite{Phi}: all Kirchberg algebras (unital, simple,
separable, nuclear, and purely infinite) satisfying the Universal Coefficient Theorem
(UCT) are classified by their K-theory. For this reason \cstar-theorists have sought to
determine when various \cstar-constructions yield purely infinite simple algebras. For
instance, strong and local boundary actions or certain filling properties displayed in
dynamical systems give rise to purely infinite crossed products \cite{AD2, KiKu, LS,
Sp2}. For groupoids, there is a locally contracting property, introduced by
Anantharaman-Delaroche \cite{AD} that guaranteed pure infiniteness of the associated
\cstar-algebra. This has recently been used, for example, to obtain sufficient conditions
for pure infiniteness of topological-graph \cstar-algebras~\cite{HuiLi}.

The notion of paradoxicality, which undergirds Tarki's theorem~\cite{Wa}, is paramount in
distinguishing the finite from the infinite. This notion was cast in a \cstar-algebraic
framework by Kerr and Nowak~\cite{KN}, R{\o}rdam and Sierakowski~\cite{RS}, and by the
first author in~\cite{Ra2}. The authors of~\cite{RS} built a type semigroup $S(X,\Gamma)$
from an action of a discrete group on the Cantor set and subsequently tied pure
infiniteness of the resulting reduced crossed product to the absence of states on this
semigroup.They prove that if a countable, discrete, and exact group $\Gamma$ acts
continuously and freely on the Cantor set $X$, and the pre-ordered semigroup
$S(X,\Gamma)$ is almost unperforated,  then the following are equivalent: (i) The reduced
crossed product  $C(X)\rtimes_{\lambda}\Gamma$ is purely infinite; (ii)
$C(X)\rtimes_{\lambda}\Gamma$ is traceless; and (iii) $S(X,\Gamma)$ is purely infinite
(that is $2\theta\leq\theta$ for every $\theta\in S(X,\Gamma)$). Inspired by their work,
the first author extended these results to noncommutative \cstar-dynamical systems
$(A,\Gamma)$ by constructing an analogous noncommutative type semigroup
$S(A,\Gamma)$~\cite{Ra2}. In this paper, we generalise this work in a different
direction, to the setting of ample \'{e}tale groupoids. That is, we associate to every
\'{e}tale ample groupoid $G$ a pre-ordered abelian monoid $S(G)$, constructed as an
appropriate quotient of the additive monoid of compactly supported locally constant
integer valued functions on the unit space. We prove that this monoid is an invariant for
equivalence of groupoids, and also that it is isomorphic to the type semigroup
$S(X,\Gamma)$ when $G$ is the transformation groupoid $G=X\rtimes\Gamma$.

Our two main theorems explore the relationship between the nature of $S(G)$ and the
structure of $\cstar_r(G)$. Our strongest results are for the situation where $G$ is
minimal, which is a necessary condition for simplicity of $\cstar_r(G)$. In
Theorem~\ref{theoremsfcnp} we characterize stable finiteness of the reduced
$\cstar$-algebras of minimal groupoids with totally disconnected unit spaces by means of
the finite nature of $S(G)$ and also by means of a coboundary condition. In the amenable
case we recover quasidiagonality. That a certain coboundary condition is equivalent to
stable finiteness is reminiscent of the work of Pimsner~\cite{Pimsner} and
Brown~\cite{BrownAFE} and also appears in noncommutative \cstar-systems~\cite{CaHS, Ra1,
RaSc}. In Theorem~\ref{pitheorem} we establish that, again for minimal groupoids with
totally disconnected unit space, if every element of $S(G)$ is properly infinite, then
$\cstar(G)$ is purely infinite; moreover these conditions are equivalent if $S(G)$ is
almost unperforated.

We deduce a dichotomy for a large class of groupoids: if $\Gunit$ is totally
disconnected, $S(G)$ is almost unperforated, and $G$ is minimal, then $\cstar_r(G)$ is
either stably finite or purely infinite.

As we were preparing the final version of this article, the article \cite{BL} was posted
on the arXiv. In Sections 4~and~5 of that paper, B\"onicke and Li have independently
obtained a number of the results in this paper, for groupoids with compact unit spaces.
We have included remarks discussing the relationships between our results and theirs
where relevant throughout the paper.

\medskip

The paper is organized as follows. We begin in Section~\ref{sec:background} by reviewing
the necessary concepts, definitions, and basic results surrounding the theory of
groupoids and their algebras. In Section~\ref{sec:paradoxicality} we study notions of
paradoxicality displayed in ample groupoids. We construct infinite reduced groupoid
\cstar-algebras and show that stable finiteness is a natural obstruction to paradoxical
behavior. Section~\ref{sec:minimal} deals with minimal groupoids, and provides a
K-theoretic description of minimality for ample groupoids. In Section~\ref{sec:SG} we
associate to every ample groupoid $G$ a type semigroup $S(G)$ and show that it reflects
any paradoxical phenomena present in the groupoid. We show that both isomorphism and
equivalence of ample groupoids preserves the type semigroup. In Section~\ref{sec:stabfin}
we establish our characterization (Theorem~\ref{theoremsfcnp}) of stable finiteness for
the reduced \cstar-algebras of minimal ample groupoids with compact unit space, and
extend this to non-compact unit spaces using the invariance of the type semigroup under
groupoid equivalence. In Section~\ref{sec:pi} we consider the purely infinite situation.
We pin down a necessary condition on $G$ for pure infiniteness of $\cstar_r(G)$, and
prove that if the type semigroup is almost unperforated and the groupoid is minimal and
topologically principal, then pure infiniteness of the type semigroup characterizes pure
infiniteness of the \cstar-algebra (Theorem~\ref{pitheorem}). In particular, we obtain a
dichotomy for the \cstar-algebras of ample minimal groupoids whose type semigroups are
almost unperforated. In section~\ref{sec:reconcile}, we reconcile our definition of the
type semigroup $S(G)$ with previous constructions for group actions and for higher-rank
graphs. Finally in Section~\ref{sec:applications}, we explore applications of our ideas
to orbit equivalence, to $n$-filling groupoids, and to zero-dimensional topological
graphs.

\smallskip

The first author would like to express his gratitude to Jack Spielberg for his many
suggestions and careful comments. A word of thanks to Christopher Schafhauser for
meaningful discussions and answered inquiries. The second author thanks Adam Sierakowski
for many helpful conversations.

\section{Preliminaries and Notation}\label{sec:background}

In this section we provide a brief introduction to the theory of groupoids,  topological
groupoids, and their reduced \cstar-algebras. We shall be mainly interested in the
\'{e}tale case. Experts are welcome to move ahead to the next section. There are a
variety of good resources on the subject, for example Patterson's book~\cite{Pat}, or the
work of Reneault~\cite{Ren}.

\subsection{Groupoids.} \label{introgroupoids}

A \emph{groupoid} is a non-empty set $G$ satisfying the following:
\begin{enumerate}
\item[G1.] There is a distinguished subset $\Gunit\subseteq G$, called the unit space.
    Elements of $\Gunit$ are called units.
\item[G2.] There are maps $r,s:G\rightarrow\Gunit$ satisfying $r(u)=s(u)=u$ for all
    $u\in \Gunit$. These maps are called the range and source maps respectively.
\item [G3.] Setting $G^{(2)}=\{(\alpha,\beta)\ |\ \alpha, \beta\in G,\
    s(\alpha)=r(\beta)\}$, there is a `law of composition'
\[m:G^{(2)}\longrightarrow G,\quad m(\alpha,\beta)=\alpha\beta\]
that satisfies
\begin{enumerate}
\item [(i)] $r(\alpha\beta)=r(\alpha)$, $s(\alpha\beta)=s(\beta)$, for all
    $(\alpha,\beta)\in G^{(2)}$.
\item[(ii)] If $(\alpha,\beta)$ and $(\beta,\gamma)$ belong to $G^{(2)}$, then
    $(\alpha,\beta\gamma)$ and $(\alpha\beta,\gamma)$ also are in $G^{(2)}$ and
    $(\alpha\beta)\gamma=\alpha(\beta\gamma)$.
\item[(iii)] For every $\alpha\in G$, $r(\alpha)\alpha=\alpha=\alpha s(\alpha)$
    (note that $(r(\alpha),\alpha)$ and $(\alpha, s(\alpha))$ are in $G^{(2)}$).
\end{enumerate}
\item[G4.] For every $\alpha\in G$ there is an `inverse' $\alpha^{-1}\in G$
    (necessarily unique) such that $(\alpha,\alpha^{-1})$ and $(\alpha^{-1},\alpha)$
    are in $G^{(2)}$ and such that $\alpha\alpha^{-1}=r(\alpha)$,
    $\alpha^{-1}\alpha=s(\alpha)$.
\end{enumerate}
It follows from definitions that $r(\alpha^{-1})=s(\alpha)$, $s(\alpha^{-1})=r(\alpha)$,
and that the map $\iota(\alpha)=\alpha^{-1}$ is an involutive bijection of $G$.

The following notation is common in the groupoid literature. If $A,B\subseteq G$, then
\[AB=\{\alpha\beta\ |\ \alpha\in A, \beta\in B, r(\beta)=s(\alpha)\}=m\left( (A\times B)\cap G^{(2)}\right).\]
Some special cases arise frequently; for instance, if $E\subseteq G$ and $U\subseteq \Gunit$
then
\begin{align*}
EU=&\{\alpha\beta\ |\ \alpha\in E, \beta\in U, s(\alpha)=r(\beta)\}=\{\alpha\beta\ |\ \alpha\in E, \beta\in U, s(\alpha)=\beta\}\\=&\{\alpha s(\alpha)\ |\ \alpha\in E,  s(\alpha)\in U\}=\{\alpha\ |\ \alpha\in E,  s(\alpha)\in U\}=E\cap s^{-1}(U),
\end{align*}
and in this context we will write
\[U_E:=r(EU)=\{r(\alpha)\ |\ \alpha\in E, s(\alpha)\in U\}\subseteq \Gunit.\]
As a special case, if $u$ is a unit in $G$, then
\[G_u:=Gu=\{\alpha\in G\ |\ s(\alpha)=u\},\quad G^{u}:=uG=\{\alpha\in G\ |\ r(\alpha)=u\},\quad G_u^u:=G_u\cap G^u.\]
The \emph{isotropy subgroupoid} is defined by $\Iso(G):=\bigcup_{u\in \Gunit}G_u^u$.

A map $\varphi:G\rightarrow H$ between groupoids is a homomorphism of groupoids if
\[(\alpha,\beta)\in G^{(2)}\Longrightarrow (\varphi(\alpha),\varphi(\beta))\in H^{(2)}\quad\text{and}\quad \varphi(\alpha\beta)=\varphi(\alpha)\varphi(\beta).\]
It is not difficult to show that such a homomorphism satisfies
\[\varphi(\Gunit)\subseteq\Hunit,\quad \varphi\circ r_G=r_H\circ\varphi,\quad \varphi\circ s_G=s_H\circ\varphi,\quad \varphi\circ\iota_G=\iota_H\circ\varphi.\]
If $\varphi$ is bijective, then the inverse map $\varphi^{-1}:H\rightarrow G$ is easily
seen to be a groupoid homomorphism, and such a $\varphi$ defines a groupoid isomorphism.
If $\varphi$ is an isomorphism, then $\varphi(\Gunit)=\Hunit$.

We are mainly interested in groupoids that admit a topology. A \emph{topological
groupoid} is a groupoid $G$ with a topology for which $G^{(2)}\subseteq G\times G$ is
closed (automatic when $G$ is Hausdorff), $m:G^{(2)}\rightarrow G$ is continuous, and
$\iota:G\rightarrow G$ is continuous. It follows that the range and source maps are again
continuous, and if $G$ is Hausdorff, it follows that $\Gunit$ is closed. An isomorphism
of topological groupoids is an isomorphism of groupoids that is also a homeomorphism.

A topological groupoid $G$ is called \emph{minimal} if $\overline{r(G_u)}=\Gunit$ for
every unit $u\in\Gunit$,  and is said to be \emph{topologically principal} or
\emph{effective} if $\Iso(G)^{\circ}=\Gunit$.

A topological groupoid $G$ is called \emph{\'{e}tale} if the maps $r,s:G\rightarrow G$
are local homeomorphisms. In this setting every $\alpha\in G$ has an open neighborhood
$E\subseteq G$ for which $r(E), s(E)$ are open subsets of $G$, and $r|_{E}:E\rightarrow
r(E)$, $s|_E:E\rightarrow s(E)$ are homeomorphisms. Such a set $E$ is called an
\emph{open bisection}. A subset of an open bisection is simply called a \emph{bisection}.
One shows that the unit space $\Gunit$ of an \'{e}tale groupoid is open and closed in
$G$. If $G$ is a locally compact Hausdorff groupoid, then $G$ is \'{e}tale if and only if
there is a basis for the topology on $G$ consisting of open bisections with compact
closure. A topological groupoid is called \emph{ample} if $G$ has a basis of compact open
bisections. If $G$ is locally compact, Hausdorff, and \'{e}tale, then  $G$ is ample if
and only if $\Gunit$ is totally disconnected (see Proposition 4.1 in~\cite{Ex1}).

When $G$ is \'{e}tale we will denote by $\B$ the collection of open bisections, and write
$\C\subseteq\B$ for the subcollection of all compact open bisections. It can be shown that
both $\B$ and $\C$ are closed under inversion, multiplication, and taking intersections.
Note that any open (compact open) $U\subseteq\Gunit$ belongs to $\B$ ($\C$).
%We note the following standard algebraic properties of bisections:
%\begin{enumerate}
%\item[] $E\in \B\  (\C)\Longrightarrow E^{-1}\in\B\  (\C)$,
%\item[] $E,F\in \B\  (\C)\Longrightarrow EF\in\B\  (\C)$,
%\item[] $E,F\in \B\  (\C)\Longrightarrow E\cap F\in\B\ (\C)$.
%\end{enumerate}
The following facts will surface regularly throughout our work. If $E$ is a bisection
in $G$, then using the fact that $r$ and $s$ are injective on $E$, we get
\[E^{-1}E=\{\alpha^{-1}\beta\ |\ \alpha,\beta\in E, r(\beta)=s(\alpha^{-1})=r(\alpha)\}=\{\alpha^{-1}\alpha\ |\ \alpha\in E\}=s(E),\]
\[EE^{-1}=\{\alpha\beta^{-1}\ |\ \alpha,\beta\in E, s(\alpha)=r(\beta^{-1})=s(\beta)\}=\{\alpha\alpha^{-1}\ |\ \alpha\in E\}=r(E).\]
Moreover, if $E$ and $F$ are disjoint bisections in $G$, then
$E^{-1}F\cap\Gunit=\emptyset=EF^{-1}\cap\Gunit$. This holds because for any
$\alpha,\beta\in G$, $\alpha^{-1}\beta\in\Gunit$ implies that $\alpha=\beta$. Finally we
note that if $E,U$ belong to $\B$ ($\C$) with $U\subseteq\Gunit$, then
$EU=(s|_{E})^{-1}(U)$ and $U_E=r(EU)$ also belong to $\B$ ($\C$), because $s|_{E}$ is a
homeomorphism and $r$ is an open map.

For convenience we shall henceforth assume that \emph{all topological groupoids are
locally compact, second countable, and Hausdorff.}

The theory of topological groupoids offers a unification of several constructions. One
motivating special case is that of a topological dynamical system. A \emph{transformation
group} is a pair $(X,\Gamma)$ where $X$ is a locally compact Hausdorff space, $\Gamma$ is
a locally compact Hausdorff group, and $\Gamma\curvearrowright X$ acts continuously by
homeomorphisms. Endowed with the product topology, $G=\Gamma\times X$ becomes a locally
compact Hausdorff groupoid where the unit space $\Gunit:=\{(e,x)\ |\ x\in X\}\cong X$ can
be identified with $X$, and for $t,t'\in\Gamma$, $x,y\in X$
\[s(t,x)=(e,x)\cong x,\quad r(t,x)=(e,t.x)\cong t.x,\quad m((t',y),(t,x))=(t't,x),\quad\text{if}\quad t.x=y.\]
This groupoid is often called the \emph{transformation groupoid} and is denoted by
$G=X\rtimes\Gamma$. If $\Gamma$ is discrete then $X\rtimes\Gamma$ is \'{e}tale, and if,
moreover, $X$ is totally disconnected then $X\rtimes\Gamma$ is ample. In the \'{e}tale
setting one can show that $X\rtimes\Gamma$ is minimal if and only if the action
$\Gamma\curvearrowright X$ is minimal ($X$ admits no non-trivial $\Gamma$-invariant
closed subspaces), and $X\rtimes\Gamma$ is topologically principal if and only if the
action $\Gamma\curvearrowright X$ is topologically free (every point in $X$ has
nowhere-dense stabilizer). In this dynamical setting $\Gamma$ naturally acts on functions
defined on $X$ through the formula $t.f(x)=f(t^{-1}.x)$, where $f:X\rightarrow\mathbb{C}$
is any function. Generalizing this to the groupoid setting we arrive at the following
definition.

\begin{definition}
Let $G$ be an \'{e}tale groupoid and $E$ a bisection. If $f:\Gunit\rightarrow\mathbb{C}$
is a function, define $Ef:\Gunit\rightarrow\mathbb{C}$ by
$$
Ef(u)=
\begin{cases}
f(s(\alpha)) & \text{ if } \exists \alpha\in E\ \text{with}\ r(\alpha)=u,\\
0 & \text{ otherwise}.
\end{cases}
$$
\end{definition}
Note that $Ef$ is well defined because $r|_{E}:E\rightarrow\Gunit$ is injective. We list
a few facts that will surface periodically in our work below. We are always assuming that
$G$ is \'{e}tale. The first easily follows from definitions.

\begin{fact}\label{factlinear}
If $E$ is a bisection, $f,g:\Gunit\rightarrow\mathbb{C}$ functions, and $z\in\mathbb{C}$,
then $$E(zf+g)=zEf+Eg.$$
\end{fact}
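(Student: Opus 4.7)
The plan is to verify the identity pointwise on $\Gunit$, splitting into cases according to whether the unit $u$ lies in $r(E)$. Fix $u \in \Gunit$. Since $E$ is a bisection, $r|_E$ is injective, so there is at most one $\alpha \in E$ with $r(\alpha) = u$; this dichotomy is exactly the one built into the defining formula for $E(\cdot)$, and it lets me evaluate each of the three expressions $E(zf+g)(u)$, $(Ef)(u)$, $(Eg)(u)$ unambiguously.

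In the first case, suppose there exists (necessarily unique) $\alpha \in E$ with $r(\alpha) = u$. Then by definition
\[
E(zf+g)(u) = (zf+g)(s(\alpha)) = z\,f(s(\alpha)) + g(s(\alpha)) = z\,(Ef)(u) + (Eg)(u),
\]
using ordinary linearity of evaluation of complex-valued functions at the point $s(\alpha)$.

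In the second case, there is no $\alpha \in E$ with $r(\alpha) = u$. Then all three quantities $E(zf+g)(u)$, $(Ef)(u)$, and $(Eg)(u)$ vanish by definition, so trivially $E(zf+g)(u) = z\,(Ef)(u) + (Eg)(u) = 0$.

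Since these two cases exhaust $\Gunit$ and the two sides agree at every $u$, we conclude $E(zf+g) = z\,Ef + Eg$. There is really no obstacle here; the only subtle point to highlight is the appeal to injectivity of $r|_E$, which ensures that the case split in the definition of $E(\cdot)$ is well-posed and that the same $\alpha$ can be used simultaneously to compute all three expressions in the first case.
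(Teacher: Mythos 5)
Your proof is correct and is precisely the routine pointwise verification that the paper has in mind when it states that this fact ``easily follows from definitions'' (the paper gives no explicit proof). The case split according to whether $u\in r(E)$, justified by injectivity of $r|_E$, is exactly the right and only point to check.
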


\begin{fact}\label{factchar}
If $E\subseteq G$ is a bisection and $U\subseteq\Gunit$, then
$E\ch_U=\ch_{r(EU)}=\ch_{U_{E}}$.
\end{fact}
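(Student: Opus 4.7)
The plan is to unpack both sides of the claimed equality pointwise and observe that they agree, using only the definition of $Ef$ together with the bisection-based identities $EU = E \cap s^{-1}(U)$ and $U_E = r(EU)$ already established in the preceding discussion.

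First I would fix $u \in \Gunit$ and split into two cases according to whether $u$ lies in $r(E)$. If $u \notin r(E)$, then by definition there is no $\alpha \in E$ with $r(\alpha) = u$, so $E\ch_U(u) = 0$; on the other hand $r(EU) \subseteq r(E)$, so $u \notin U_E$ and $\ch_{U_E}(u) = 0$ as well. If instead $u \in r(E)$, the bisection property says that $r|_E$ is injective, so there is a unique $\alpha \in E$ with $r(\alpha) = u$, and the definition of $E\ch_U$ gives $E\ch_U(u) = \ch_U(s(\alpha))$.

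Next I would chase the conditions: $E\ch_U(u) = 1$ exactly when $s(\alpha) \in U$, which by the earlier identity $EU = E \cap s^{-1}(U)$ is equivalent to $\alpha \in EU$, which in turn is equivalent to $u = r(\alpha) \in r(EU) = U_E$. Combining this with the trivial case yields $E\ch_U(u) = \ch_{U_E}(u)$ for every $u \in \Gunit$, proving the fact.

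There is really no obstacle here; the only subtlety is making sure the single $\alpha \in E$ with $r(\alpha) = u$ is genuinely unique, which is exactly the hypothesis that $E$ is a bisection and was already used in displaying $Ef$ as well defined. The second equality $\ch_{r(EU)} = \ch_{U_E}$ is immediate from the definition $U_E := r(EU)$.
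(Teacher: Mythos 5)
Your proposal is correct and matches the paper's argument: both proceed by a pointwise comparison of $E\ch_U(u)$ and $\ch_{U_E}(u)$, splitting on whether $u \in r(E)$ and using the uniqueness of $\alpha \in E$ with $r(\alpha) = u$ together with the identity $U_E = r(EU)$. The only cosmetic difference is that the paper displays both functions as explicit piecewise formulas rather than chasing the membership conditions in prose.
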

\begin{proof}
Definitions imply $U_E=r(EU)=\{r(\alpha)\ |\ \alpha\in E,  s(\alpha)\in U\}$, so
\[\ch_{U_E}(u)=
\begin{cases}
1 & \text{ if } \exists\alpha\in E\ \text{with}\ s(\alpha)\in U,\ r(\alpha)=u,\\
0 & \text{ otherwise}.
\end{cases}\]
whereas
\begin{align*}
E\ch_{U}(u)&=
\begin{cases}
\ch_U(s(\alpha)) & \text{ if } \exists\alpha\in E\ \text{with}\ r(\alpha)=u,\\
0 & \text{ otherwise}.
\end{cases}\\
&=
\begin{cases}
1 & \text{ if } \exists\alpha\in E\ \text{with}\ s(\alpha)\in U,\ r(\alpha)=u,\\
0 & \text{ otherwise}.
\end{cases}
\end{align*}
Thus $\ch_{U_E}(u)=E\ch_{U}(u)$ for every $u\in\Gunit$.
\end{proof}

\begin{fact}\label{factcts}
If $f:\Gunit\rightarrow\mathbb{C}$ is continuous and $E$ is a closed and open bisection,
then $Ef:\Gunit\rightarrow\mathbb{C}$ is continuous.
\end{fact}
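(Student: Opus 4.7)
The plan is to exploit the \'{e}tale structure of $G$ to write $Ef$ explicitly on two complementary pieces of $\Gunit$ and then glue. Since $E$ is an open bisection, $r|_E : E \to r(E)$ and $s|_E : E \to s(E)$ are homeomorphisms onto open subsets of $\Gunit$, so the composition
\[
T_E := s \circ (r|_E)^{-1} : r(E) \longrightarrow s(E)
\]
is continuous. For $u \in r(E)$, the unique $\alpha \in E$ with $r(\alpha) = u$ is $(r|_E)^{-1}(u)$, so by definition $Ef(u) = f(T_E(u))$ on $r(E)$ while $Ef(u) = 0$ off $r(E)$.

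First I would observe that $Ef|_{r(E)} = f \circ T_E$ is continuous on the open set $r(E)$ as a composition of continuous functions, and that $Ef$ is trivially continuous (being identically $0$) on $\Gunit \setminus r(E)$. To upgrade these two pieces to global continuity, the remaining step is to verify that $r(E)$ is clopen in $\Gunit$, so that $\Gunit = r(E) \sqcup (\Gunit \setminus r(E))$ is a clopen partition and continuity on each piece automatically propagates to $\Gunit$. Openness of $r(E)$ is immediate because $r$ is a local homeomorphism and $E$ is open.

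The main obstacle is showing that $r(E)$ is also closed in $\Gunit$; this is where the hypothesis that $E$ is closed enters. In the ample setting relevant to this paper, where the fact is applied to elements of $\mathcal{C}$, $E$ is in practice compact, and then $r(E)$ is the continuous image of a compact set in the Hausdorff space $\Gunit$, hence closed. Once closedness of $r(E)$ is in hand, the clopen partition yields global continuity of $Ef$ and completes the proof.
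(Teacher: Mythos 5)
Your proof is in substance the same as the paper's: both hinge on the facts that $Ef|_{r(E)} = f\circ s\circ(r|_E)^{-1}$ is continuous because $r|_E$ is a homeomorphism onto the open set $r(E)$, and that $Ef\equiv 0$ off $r(E)$, so that everything reduces to $r(E)$ being clopen in $\Gunit$. The paper runs this through convergent sequences where you glue over a clopen partition; that difference is purely cosmetic. The one substantive step is closedness of $r(E)$, and there you are actually more careful than the paper, which simply declares $\Gunit\setminus r(E)$ to be open. Deducing this from closedness of $E$ alone is genuinely problematic: $r$ is a local homeomorphism, hence an open map but not a closed one, and closedness of $E$ in $G$ only yields closedness of $r(E)$ relative to $r(E')$ for an ambient open bisection $E'\supseteq E$, which is merely open in $\Gunit$. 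Indeed the statement fails as literally written: in $G = X\times\mathbb{Z}$ with $X = \{0\}\cup\{1/k : k\ge 1\}$ and the trivial action, the set $E = \{(1/k,k) : k\ge 1\}$ is a clopen bisection whose range $r(E) = \{1/k : k\ge 1\}$ is not closed, so $E\ch_{\Gunit} = \ch_{r(E)}$ is discontinuous at $0$. Your fallback to compactness of $E$ --- so that $r(E)$ is compact, hence closed in the Hausdorff space $\Gunit$ --- is therefore the right fix rather than a shortcut, and it covers every use the paper makes of this Fact, since it is only ever invoked for elements of $\C$.
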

\begin{proof}
Let $u, (u_n)_n\in\Gunit$ with $(u_n)_n\longrightarrow u$. Suppose first that there is an
$\alpha\in E$ with $r(\alpha)=u$. Then $u\in r(E)$ which is open, so $u_n$ belongs to
$r(E)$ for large enough $n$. Say $r(\alpha_n)=u_n$, for some $\alpha_n\in E$. Now
$r|_{E}$ is a homeomorphism, so $(\alpha_n)_n\longrightarrow\alpha$ which implies
$(s(\alpha_n))_n\longrightarrow s(\alpha)$. Therefore
\[\left(Ef(u_n)\right)_n=\left(f(s(\alpha_n))\right)_n\longrightarrow f(s(\alpha))=Ef(u).\]

Next suppose that $u\notin r(E)$, which means that $Ef(u)=0$. Then $u\in \Gunit\setminus
r(E)$ which is again open, so that $u_n\notin r(E)$ for all large $n$. In this case
$Ef(u_n)=0$ for all large $n$.
\end{proof}

\subsection{The reduced \texorpdfstring{\cstar}{C*}-algebra \texorpdfstring{$\cstar_r(G)$}{C*(G)}.}
We briefly describe the construction of the reduced \cstar-algebra of a locally compact,
Hausdorff, \'{e}tale groupoid $G$. Fix such a $G$ and consider the complex linear space
$C_c(G)$ of compactly supported complex-valued functions on $G$. Convolution and
involution defined by
\begin{align*}
f\cdot g(\gamma)&=\sum_{\alpha\beta=\gamma}f(\alpha)g(\beta),\quad\gamma\in G\\f^{*}(\alpha)&=\overline{f(\alpha^{-1})},\quad\alpha\in G
\end{align*}
give $C_c(G)$ the structure of a complex $\ast$-algebra. Is is important to note that
there is a natural inclusion $C_c(\Gunit)\hookrightarrow C_c(G)$ of $\ast$-algebras.
Also, if $E$ and $F$ are compact open bisections, then the characteristic functions
$\ch_E, \ch_F\in C_c(G)$ and
\[\ch_E^{*}=\ch_{E^{-1}}\quad\text{and}\quad\ch_E\ch_F=\ch_{EF}.\]
The $\ast$-algebra $C_c(G)$ is then represented on Hilbert spaces as follows: for a unit
$u\in\Gunit$ define
\[\pi_u:C_c(G)\rightarrow\mathbb{B}(\ell^{2}(G_u))\quad\text{as}\quad\pi_u(f)(\xi)(\gamma)=f\cdot\xi(\gamma)=\sum_{\alpha\beta=\gamma}f(\alpha)\xi(\beta).\]
It is verified that $\pi_u$ is a representation of the $\ast$-algebra $C_c(G)$, and that
the direct sum $\pi_r:=\bigoplus_{u\in\Gunit}\pi_u$ is faithful. It follows that
$\|f\|_r:=\|\pi_r(f)\|$ is a \cstar-norm on $C_c(G)$ and we may define the reduced
\cstar-algebra of the groupoid $G$ as
\[\cstar_r(G):=\overline{C_c(G)}^{\|\cdot\|_r}.\]

The inclusion $C_c(\Gunit)\hookrightarrow\cstar_r(G)$ has a partial `inverse', namely the
conditional expectation. More precisely, the restriction map $C_c(G)\rightarrow
C_c(\Gunit)$, $f\mapsto f|_{\Gunit}$ extends continuously to a faithful conditional
expectation $\mathbb{E}:\cstar_r(G)\rightarrow C_0(\Gunit)$.

Given a discrete transformation group $(X,\Gamma)$, the \cstar-algebra  obtained from the
resulting transformation  groupoid $X\rtimes\Gamma$ is a familiar construction; the
reduced \cstar-crossed product. In fact,
$$\cstar_{r}(X\rtimes\Gamma)\cong C_0(X)\rtimes_{r}\Gamma.$$

\subsection{K-theory and traces.} \label{Kth&traces}
Recall that if $X$ is the Cantor set, $\Ko(C(X))$ is order isomorphic to
$C(X,\mathbb{Z})$ via the dimension map $\dim:\Ko(C(X))\rightarrow C(X,\mathbb{Z})$ given
by $\dim([p]_0)(x)=\Tr(p(x))$, where $p$ is a projection over the matrices of $C(X)$;
$M_n(C(X))\cong C(X;\mathbb{M}_{n})$, and $\Tr$ denotes the standard (non-normalized)
trace on $\mathbb{M}_{n}$. It is clear that under this isomorphism $\dim
([\ch_E]_0)=\ch_E$ where $E\subseteq X$ is a compact open subset.

If $G$ is an \'{e}tale groupoid with unit space $\Gunit$ homeomorphic to a Cantor set,
the inclusion $\iota:C(\Gunit)\hookrightarrow\cstar_r(G)$ induces a positive group
homomorphism $$\Ko(\iota):\Ko(C(\Gunit))\longrightarrow\Ko(\cstar_r(G))$$ that satisfies
$\Ko(\iota)([\ch_{s(E)}]_0)=\Ko(\iota)([\ch_{r(E)}]_0)$ for any compact open bisection
$E$. This holds because the projections $\iota(\ch_{s(E)})$ and $\iota(\ch_{r(E)})$ are
Murray-von-Neumann equivalent in $\cstar_r(G)$. Indeed, $\ch_E\in
C_c(G)\subseteq\cstar_r(G)$ and
\[\ch_E^*\ch_E=\ch_{E^{-1}}\ch_E=\ch_{E^{-1}E}=\ch_{s(E)},\]
\[\ch_E\ch_E^{-1}=\ch_E\ch_{E^{-1}}=\ch_{EE^{-1}}=\ch_{r(E)}.\]

The following elementary fact will be useful. Let $A$ and $B$ be \cstar-algebras with $B$
stably finite, and suppose $\varphi:A\hookrightarrow B$ is an embedding. The induced map
on K-theory $\Ko(A)\rightarrow\Ko(B)$ is faithful. For if $p$ is a projection in
$\mathbb{M}_n(A)$ for some $n$, then
\[\Ko(\varphi)([p]_0)=0\Longrightarrow[\varphi(p)]_0=0\Longrightarrow\varphi(p)=0\Longrightarrow p=0,\]
where the second implication follows from the fact that $B$ is stably finite.

Traces on $\cstar_r(G)$ are often obtained via invariant measures on the unit space. We describe these.

\begin{definition}
Let $G$ be an \'{e}tale groupoid. A regular Borel measure $\mu$ on $\Gunit$ is said to be
$G$-invariant if, $\mu(s(E))=\mu(r(E))$ for every open bisection $E$.
\end{definition}

For such a groupoid $G$ and invariant probability measure $\mu$, we obtain a tracial state
$\tau_{\mu}$ on $\cstar_r(G)$ by composing the conditional expectation
$\mathbb{E}:\cstar_r(G)\rightarrow C_0(\Gunit)$ with integration against $\mu$;
$I_{\mu}:C_0(\Gunit)\rightarrow\mathbb{C}$, $f\mapsto\int_{\Gunit}fd\mu$:
\[\tau_{\mu}:=I_{\mu}\circ\mathbb{E}:\cstar_r(G)\longrightarrow\mathbb{C}\quad \tau_{\mu}(a)=\int_{\Gunit}\mathbb{E}(a)d\mu=\int_{\Gunit}a(u)d\mu(u).\]
Since the expectation is faithful, the above trace $\tau_{\mu}$ is faithful provided the
measure $\mu$ has full support. Finally, if $G$ is such a groupoid with the added
assumption that $G$ is principal, then every trace on $\cstar_r(G)$ arises in this way.

\section{Paradoxical Groupoids}\label{sec:paradoxicality}

We begin by studying notions of paradoxicality and paradoxical decompositions in
\'{e}tale and ample groupoids which give rise to infinite reduced groupoid
\cstar-algebras. Recall that a projection $p$ in a \cstar-algebra $A$ is infinite if
there is a partial isometry $v\in A$ with $v^*v=p$ and $vv^*<p$, that is, $p$ is
Murray-von Neumann equivalent to a proper subprojection of itself. A \cstar-algebra $A$
is infinite if it admits an infinite projection.

\begin{definition}\label{defnparadox}
Let $G$ be an \'{e}tale and ample groupoid, and suppose $A\subseteq \Gunit$ is a non-empty
compact open subset of the unit space. We say that $A$ is \emph{paradoxical} if there are
bisections $E_1,\dots, E_n$ in $\C$ satisfying
\begin{equation}\label{paradox} \ch_{A}\leq\sum_{i=1}^{n}\ch_{s(E_i)},\quad\text{and}\quad \sum_{i=1}^{n}\ch_{r(E_i)}<\ch_{A}.
\end{equation}

We will say that $G$ is \emph{paradoxical} if there is some paradoxical $A\subseteq\Gunit$.

\end{definition}

A remark is in order. The notation $f<g$, for functions $f,g\in C_c(\Gunit)$, means that
$f\leq g$ and $f\neq g$.

\begin{proposition}
Let $G$ be an \'{e}tale and ample groupoid. If $G$ is paradoxical, then the reduced
groupoid \cstar-algebra $\cstar_r(G)$ is infinite.
\end{proposition}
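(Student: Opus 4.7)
The plan is to exhibit an explicit partial isometry $v \in \cstar_r(G)$ that witnesses $p := \ch_A \in C_c(\Gunit) \subseteq \cstar_r(G)$ as an infinite projection. The strategy has three steps: unpack the combinatorial content of the two inequalities in Definition~\ref{defnparadox}; disjointify the source cover of $A$ and restrict each $E_i$ accordingly; and sum the resulting characteristic functions, verifying by direct computation that $v^*v = \ch_A$ and $vv^* < \ch_A$.

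Since each $\ch_{r(E_i)}$ is $\{0,1\}$-valued, the pointwise inequality $\sum_{i=1}^n \ch_{r(E_i)} \leq \ch_A$ already forces the sets $r(E_i)$ to be pairwise disjoint and contained in $A$, and the strictness then gives $\bigsqcup_{i=1}^n r(E_i) \subsetneq A$ as compact open subsets of $\Gunit$. Meanwhile $\ch_A \leq \sum_i \ch_{s(E_i)}$ simply says $A \subseteq \bigcup_i s(E_i)$. I would disjointify by setting $A_1 := A \cap s(E_1)$ and $A_i := (A \cap s(E_i)) \setminus \bigcup_{j<i} s(E_j)$ for $i \geq 2$; each $A_i$ is compact open (a Boolean combination of compact open subsets of $\Gunit$), the $A_i$ are pairwise disjoint, and $A = \bigsqcup_i A_i$. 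For each $i$, set $F_i := E_i A_i = (s|_{E_i})^{-1}(A_i) \in \C$; then $s(F_i) = A_i$ and $r(F_i) \subseteq r(E_i)$, so the sources partition $A$ while the ranges remain pairwise disjoint.

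Now define $v := \sum_{i=1}^n \ch_{F_i} \in C_c(G) \subseteq \cstar_r(G)$. Using $\ch_E^* = \ch_{E^{-1}}$ and $\ch_E \ch_F = \ch_{EF}$, the pairwise disjointness of both $\{s(F_i)\}$ and $\{r(F_i)\}$ forces $F_i^{-1}F_j = \emptyset = F_iF_j^{-1}$ for $i \neq j$, so the cross terms in $v^*v$ and $vv^*$ all vanish and
$$v^*v = \sum_i \ch_{s(F_i)} = \sum_i \ch_{A_i} = \ch_A, \qquad vv^* = \sum_i \ch_{r(F_i)} = \ch_{\bigsqcup_i r(F_i)}.$$
Since $\bigsqcup_i r(F_i) \subseteq \bigsqcup_i r(E_i) \subsetneq A$, we obtain $vv^* < v^*v = \ch_A$, exhibiting $\ch_A$ as an infinite projection in $\cstar_r(G)$. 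I anticipate no serious obstacle; the whole argument is a direct construction. The one mildly subtle point is recognising that the hypothesis $\sum \ch_{r(E_i)} \leq \ch_A$, read pointwise over a $\{0,1\}$-valued range, already encodes the disjointness of the $r(E_i)$, which is precisely what makes $vv^*$ a genuine projection in $C_0(\Gunit)$ rather than a sum of overlapping projections, and so makes the comparison with $\ch_A$ immediate.
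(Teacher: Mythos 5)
Your proof is correct and follows essentially the same route as the paper's: the same disjointification $A_i = (A\cap s(E_i))\setminus\bigcup_{j<i}(\cdot)$, the same restricted bisections $F_i=(s|_{E_i})^{-1}(A_i)$, and the same partial isometry $v=\sum_i\ch_{F_i}$ with $v^*v=\ch_A$ and $vv^*=\sum_i\ch_{r(F_i)}<\ch_A$. The observation that $\sum_i\ch_{r(E_i)}\leq\ch_A$ already forces disjointness of the $r(E_i)$ is exactly the paper's opening step.
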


\begin{proof}
Suppose the compact open $\emptyset\neq A\subseteq\Gunit$ is paradoxical. Let $E_1,\dots
E_n$ be the compact open bisections that satisfy~\ref{paradox}. The inequality
$\sum_{i=1}^{n}\ch_{r(E_i)}<\ch_{A}$ implies that $r(E_i)\cap r(E_j)=\emptyset$ for
$i\neq j$. It naturally follows that the bisections $E_i$ are  also disjoint. Moreover,
since $\ch_{A}\leq\sum_{i=1}^{n}\ch_{s(E_i)}$ we know that
$A\subseteq\bigcup_{i=1}^{n}s(E_i)$. There are compact open subsets $A_1,\dots, A_n\subseteq
A$ that satisfy $A_i\subseteq s(E_i)$ and $\bigsqcup_{i=1}^{n}A_i=A$. This is seen by
inductively defining
\[A_1:=A\cap s(E_1),\ A_2:=\left(A\cap s(E_2)\right)\setminus A_1,\dots, A_n:=\left(A\cap s(E_n)\right)\setminus\left(\bigcup_{i=1}^{n-1}A_i\right).\]
Now let $F_i:=(s|_{E_i})^{-1}(A_i)\subseteq E_i$ for $i=1,\dots, n$. Note that the $F_i$
are again disjoint compact open bisections and that $s(F_i)=A_i$. Observe
\[F_i^{-1}F_j=\{\alpha^{-1}\beta\ |\ \alpha\in F_i, \beta\in F_j, r(\alpha)=s(\alpha^{-1})=r(\beta)\}=
\begin{cases}
s(F_i)=A_i & \text{ if } i=j,\\
\emptyset & \text{ if } i\neq j.
\end{cases}\]
\[F_iF_j^{-1}=\{\alpha\beta^{-1}\ |\ \alpha\in F_i, \beta\in F_j, s(\alpha)=r(\beta^{-1})=s(\beta)\}=
\begin{cases}
r(F_i) & \text{ if } i=j,\\
\emptyset & \text{ if } i\neq j.
\end{cases}.\]
Therefore if we set
\[v:=\sum_{i=1}^{n}\ch_{F_i}\ \text{in}\ C_c(G)\subseteq\cstar_r(G),\]
we obtain
\begin{align*}\textstyle
v^*v
    &= \textstyle \Big(\sum_{i=1}^{n}\ch_{F_i}\Big)^*\Big(\sum_{j=1}^{n}\ch_{F_j}\Big)
     = \sum_{i,j=1}^{n}\ch_{F_i}^*\ch_{F_j}\\
    &= \textstyle \sum_{i,j=1}^{n}\ch_{F_i^{-1}F_j}
     = \sum_{i=1}^{n}\ch_{s(F_i)}
     = \sum_{i=1}^{n}\ch_{A_i}=\ch_{A}.
\end{align*}
whereas
\begin{align*}
vv^*
    &= \textstyle \Big(\sum_{i=1}^{n}\ch_{F_i}\Big)\Big(\sum_{j=1}^{n}\ch_{F_j}\Big)^{*}
     = \sum_{i,j=1}^{n}\ch_{F_i}\ch_{F_j}^{*}\\
    &= \sum_{i,j=1}^{n}\ch_{F_iF_j^{-1}}
     =\sum_{i=1}^{n}\ch_{r(F_i)}
     \leq \sum_{i=1}^{n}\ch_{r(E_i)}<\ch_{A}.
\end{align*}
The projection $\ch_A$ is therefore infinite whence $\cstar_r(G)$ is infinite.
\end{proof}

Parodoxicality carries the connotation of `duplication of sets', so we revisit the ideas
explored in~\cite{KN} and ~\cite{Ra2} and define, in the groupoid setting, a notion of
paradoxical decomposition with a covering multiplicity.

\begin{definition}
Let $G$ be an \'{e}tale and ample groupoid and suppose $A\subseteq \Gunit$ is a non-empty
compact open subset of the unit space. With $k>l>0$ positive integers, we say that $A$ is
\emph{$(k,l)$-paradoxical} if there are bisections $E_1,\dots, E_n$ in $\C$ satisfying
\begin{equation}\label{klparadox} k\ch_{A}\leq\sum_{i=1}^{n}\ch_{s(E_i)},\quad\text{and}\quad \sum_{i=1}^{n}\ch_{r(E_i)}\leq l\ch_{A}.
\end{equation}

We call $A$ \emph{properly paradoxical} if it is $(2,1)$-paradoxical.

If $A$ fails to be $(k,l)$-paradoxical for all integers $k>l>0$ then we say that $A$ is
\emph{completely non-paradoxical.}

If every compact open $A\subseteq \Gunit$ is completely non-paradoxical we say that $G$ is
completely non-paradoxical.
\end{definition}

\begin{remark}
A related definition of paradoxicality appears as \cite[Definition~4.4]{BL}; our
definitions is slightly more flexible in that we do not require that the sets $s(E_i)$
in~\eqref{klparadox} cover $A$, and we do not insist on any orthogonality amongst the
sets $r(E_i)$. It is clear that if $G$ is $(\mathbb{E}, k, l)$-paradoxical in the sense
of B\"onicke and Li for some $\mathbb{E}, k, l$, then it is $(k,l)$-paradoxical in our
sense.
\end{remark}

It is not surprising that stable finiteness is an obstruction to paradoxicality. The
following result is related to \cite[Proposition~4.8]{BL}, modulo the difference in our
definitions of paradoxicality.

\begin{proposition}\label{sfimpliescnp}
Let $G$ be an \'{e}tale, and ample groupoid. If $\cstar_{r}(G)$ is stably finite then $G$
is completely non-paradoxical.
\end{proposition}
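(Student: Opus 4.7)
The plan is to prove the contrapositive: assuming some nonempty compact open $A\subseteq\Gunit$ is $(k,l)$-paradoxical, witnessed by bisections $E_1,\dots,E_n\in\C$, I will exhibit an infinite projection in $M_N(\cstar_r(G))$ for $N:=\max\{n,k,l\}$. Specifically, I will produce a partial isometry $T\in M_N(\cstar_r(G))$ with $T^*T=\ch_A\otimes 1_k$ and $TT^*\leq\ch_A\otimes 1_l$. Because $\ch_A\neq 0$ and $k>l$, the projection $\ch_A\otimes 1_l$ is a proper subprojection of $\ch_A\otimes 1_k$, so $\ch_A\otimes 1_k$ is Murray--von Neumann equivalent (via $T$) to a proper subprojection of itself, hence infinite, contradicting stable finiteness.

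To build the source half of $T$, I would exploit $k\ch_A\leq\sum_i\ch_{s(E_i)}$. Partition $A$ into finitely many pairwise disjoint compact open pieces $A_\alpha$ on which the pattern $\{i:A_\alpha\subseteq s(E_i)\}$ is constant, necessarily of cardinality $\geq k$, and choose on each piece distinct indices $i^\alpha_1,\dots,i^\alpha_k$ from that pattern. Setting $F^\alpha_j:=(s|_{E_{i^\alpha_j}})^{-1}(A_\alpha)$ and regrouping by chosen index,
\[
G^i_j:=\bigsqcup_{\alpha:\,i^\alpha_j=i}F^\alpha_j\subseteq E_i
\]
remains a compact open bisection. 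Placing $\ch_{G^i_j}$ in position $(i,j)$ for $1\leq i\leq n$, $1\leq j\leq k$, and zeros elsewhere defines $V\in M_N(\cstar_r(G))$. The identities $E^{-1}E=s(E)$ and $EE^{-1}=r(E)$, combined with the facts that $s(G^i_j)$ and $s(G^{i'}_j)$ are disjoint for $i\neq i'$ (since $i^\alpha_j$ is unique) and that $G^i_j, G^i_{j'}$ are disjoint subsets of $E_i$ for $j\neq j'$ (since the $i^\alpha_j$ are distinct), then give $V^*V=\ch_A\otimes 1_k$ and $VV^*$ diagonal with entries $(VV^*)_{ii}\leq\ch_{r(E_i)}$.

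For the range half, the inequality $\sum_i\ch_{r(E_i)}\leq l\ch_A$ forces $r(E_i)\subseteq A$ for every $i$ and bounds the covering multiplicity on $A$ by $l$; this lets me construct a partial isometry $W\in M_N(C(\Gunit))\subseteq M_N(\cstar_r(G))$ with $W^*W=\diag(\ch_{r(E_1)},\dots,\ch_{r(E_n)})$ and $WW^*\leq\ch_A\otimes 1_l$. This is a purely commutative move: on each atom $B$ of the finite Boolean algebra generated by $\{r(E_i)\}$ at most $l$ of the indicators equal $1$, and an injection of the occupied indices into $\{1,\dots,l\}$ provides the required block permutation on $B$; the finitely many local permutations glue continuously because $\Gunit$ is totally disconnected. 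Setting $T:=WV$, the domination $VV^*\leq W^*W$ forces $W^*WV=V$, so $T^*T=V^*W^*WV=V^*V=\ch_A\otimes 1_k$ while $TT^*\leq WW^*\leq\ch_A\otimes 1_l$, closing the argument.

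I expect the main obstacle to be the combinatorial bookkeeping: verifying the off-diagonal vanishing of $V^*V$ and $VV^*$ is entirely driven by the distinct-index convention forcing the underlying sub-bisections to have pairwise disjoint sources, and the construction of $W$ requires stable choices of injections across finitely many Boolean atoms. Neither step is deep, but laying out the disjointness bookkeeping and the matching between the cover patterns of $s(E_i)$ and $r(E_i)$ is where care is needed.
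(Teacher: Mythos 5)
Your argument is correct, but it takes a genuinely different route from the paper. The paper argues directly (not by contraposition) at the level of $\mathrm{K}_0$: since $\cstar_r(G)$ is stably finite, the map $\Ko(\iota):\Ko(C(\Gunit))\cong C(\Gunit,\mathbb{Z})\to\Ko(\cstar_r(G))$ induced by the inclusion $\iota:C(\Gunit)\hookrightarrow\cstar_r(G)$ is faithful; combining $\Ko(\iota)(\ch_{s(E_i)})=\Ko(\iota)(\ch_{r(E_i)})$ with the inequalities~\eqref{klparadox} gives $kx\leq lx$ for $x=\Ko(\iota)(\ch_A)$, whence $(k-l)x\in\Ko^+\cap(-\Ko^+)=\{0\}$ and then $x=0$, contradicting $A\neq\emptyset$. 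You instead unpack what lies behind that K-theory computation: from $(k,l)$-paradoxicality you explicitly build partial isometries $V$ and $W$ in $M_N(\cstar_r(G))$ witnessing $k[\ch_A]\preceq l[\ch_A]$ in the Murray--von Neumann semigroup, and so exhibit $\ch_A\otimes 1_k$ as an infinite projection. Your disjointness bookkeeping checks out: the distinct-index convention on each Boolean atom of $A$ relative to $\{s(E_i)\}$ forces $(G^i_j)^{-1}G^i_{j'}=\emptyset$ for $j\neq j'$ and $G^i_j(G^{i'}_j)^{-1}=\emptyset$ for $i\neq i'$, the multiplicity bound $\sum_i\ch_{r(E_i)}\leq l\ch_A$ does yield the commutative partial isometry $W$, and $VV^*\leq W^*W$ gives $W^*WV=V$ as you claim. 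What each approach buys: the paper's is much shorter, outsourcing all combinatorics to standard facts about $\Ko$ of stably finite algebras recalled in its preliminaries; yours avoids K-theory entirely, produces a concrete infinite projection in a matrix amplification (so it directly proves the contrapositive ``paradoxical $\Rightarrow$ not stably finite''), and is essentially the same machinery the paper later deploys in Lemma~\ref{pilemma} to pass from proper paradoxicality to properly infinite projections -- so your construction is not wasted effort, just front-loaded.
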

\begin{proof}
Suppose $A\subseteq \Gunit$ is compact open and also $(k,l)$-paradoxical. Let $E_1,\dots
E_n\in\C$ be the bisections that satisfy~\ref{klparadox}. Moreover, let $\iota:
C(\Gunit)\hookrightarrow\cstar_{r}(G)$ denote the canonical inclusion. Since
$\cstar_{r}(G)$  is stably finite,  the induced map on K-theory
\[\mathrm{K}_0(\iota): \mathrm{K}_0(C(\Gunit))\cong C(\Gunit,\mathbb{Z})\longrightarrow \mathrm{K}_{0}(\cstar_{r}(G))\]
is faithful (see section~\ref{Kth&traces}). We compute
\begin{align*}
k\mathrm{K}_0(\iota)(\ch_{A})
    &=\mathrm{K}_0(\iota)(k\ch_{A})
     \leq\mathrm{K}_0(\iota)\Big(\sum_{i=1}^{n}\ch_{s(E_i)}\Big)
     =\sum_{i=1}^{n}\mathrm{K}_0(\iota)(\ch_{s(E_i)})\\
    &=\sum_{i=1}^{n}\mathrm{K}_0(\iota)(\ch_{r(E_i)})
     =\mathrm{K}_0(\iota)\Big(\sum_{i=1}^{n}\ch_{r(E_i)}\Big)
     \leq \mathrm{K}_0(\iota)(l\ch_{A})
     =l\mathrm{K}_0(\iota)(\ch_{A}).
\end{align*}
Writing $x=\mathrm{K}_0(\iota)(\ch_{A})$ we get that $(l-k)x$ belongs to
$\mathrm{K}_{0}(\cstar_{r}(G))^+\cap -\left(\mathrm{K}_{0}(\cstar_{r}(G))^+\right)$,
which is trivial by stable finiteness. Thus $(l-k)x=0$. Again using the fact that
$\cstar_{r}(G)$ is stably finite we get $x=0$, and so $\ch_{A}=0$ since
$\mathrm{K}_0(\iota)$ is faithful. This contradicts the assumption that $A$ is non-empty.
\end{proof}

One of the principal goals of this paper is to characterize stably finite \cstar-algebras
that arise from \'{e}tale groupoids. In this vein we aim to establish a converse to
Proposition~\ref{sfimpliescnp}. What is needed is a technique to pass from complete
non-paradoxicality of a groupoid $G$ to constructing faithful tracial states on
$\cstar_r(G)$. We will achieve this in the minimal setting (Theorem~\ref{theoremsfcnp}).
The next section is devoted to expressing the notion of minimality in a K-theoretic
framework which will best suit this purpose and enable us to construct faithful traces.

\section{Minimal Groupoids}\label{sec:minimal}

Recall that a \cstar-dynamical system $(A,\Gamma,\alpha)$ is said to be minimal if $A$
admits no non-trivial $\Gamma$-invariant ideals. If $A=C_0(X)$, where $X$ is a locally
compact space, and $\alpha$ is induced by a continuous action $\Gamma\curvearrowright X$,
then $\alpha$ is minimal if and only if $X$ admits no non-trivial $\Gamma$-invariant
closed subsets. When $X$ is compact it is not difficult to show that the action is
minimal if and only if the orbit $\Orb(x):=\{t.x\ |\ t\in\Gamma\}$ of every $x\in X$ is
dense in $X$. The analogous statement for \'{e}tale groupoids is natural. Indeed, an
\'{e}tale groupoid $G$ is said to be \emph{minimal} if the `orbit' of every unit is dense
in the unit space, that is, $\overline{r(G_u)}=\Gunit$ for every unit $u\in \Gunit$.

If $(X,\Gamma)$ is a discrete transformation group with $X$ compact, it is not difficult
to show (see Proposition 3.1 in~\cite{Ra3}) that the action is minimal if and only if
the following holds: for every non-empty open set $U\subseteq X$, there are group elements
$t_1,\dots, t_n\in\Gamma$ such that $X=\cup_{j=1}^{n}t_j.U$. The following result is
similar, where group elements are replaced by open bisections.

Recall from section~\ref{introgroupoids} that if $E$ is an open bisection in $G$ and
$U\subseteq\Gunit$ is open, then both
\[EU=\{\alpha\ |\ \alpha\in E,  s(\alpha)\in U\},\quad\text{and}\quad U_E=r(EU)=\{r(\alpha)\ |\ \alpha\in E,  s(\alpha)\in U\}\]
are also open.

\begin{proposition}\label{min1}
Let $G$ be an \'{e}tale groupoid with compact unit space $\Gunit$. The following are
equivalent
\begin{enumerate}
\item [(i)] $G$ is minimal.
\item [(ii)] For every non-empty open $U\subseteq \Gunit$, there are open bisections
    $E_1,\dots, E_n$ such that
\[\bigcup_{j=1}^{n}U_{E_j}=\Gunit.\]
\end{enumerate}
\end{proposition}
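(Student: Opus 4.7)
The plan is to prove both implications directly, using the definitions of $U_E$ and of minimality, with compactness of $G^{(0)}$ entering only in the forward direction.

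For (i)$\Rightarrow$(ii), fix a non-empty open $U \subseteq G^{(0)}$. I want to produce, for each unit $v \in G^{(0)}$, an open bisection $E_v$ with $v \in U_{E_v}$; then the $\{U_{E_v}\}$ form an open cover of $G^{(0)}$ (each $U_{E_v}$ is open since $E_v$ is and $r$ is an open map) and compactness of $G^{(0)}$ yields a finite subcover, which is exactly (ii). To produce $E_v$, note that membership $v \in U_{E_v}$ means $v = r(\alpha)$ for some $\alpha \in E_v$ with $s(\alpha) \in U$. Minimality applied to $v$ gives $\overline{r(G_v)} = G^{(0)}$, hence $r(G_v) \cap U \neq \emptyset$, so there is some $\beta \in G$ with $s(\beta) = v$ and $r(\beta) \in U$. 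Set $\alpha := \beta^{-1}$, so $r(\alpha) = v$ and $s(\alpha) \in U$. Since $G$ is \'etale, take any open bisection $E_v'$ containing $\alpha$ and shrink to $E_v := E_v' \cap s^{-1}(U)$, which is still an open bisection and still contains $\alpha$; by construction $v \in U_{E_v}$.

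For (ii)$\Rightarrow$(i), fix a unit $u \in G^{(0)}$ and a non-empty open $U \subseteq G^{(0)}$; it suffices to show $r(G_u) \cap U \neq \emptyset$. Apply (ii) to $U$ to obtain open bisections $E_1,\dots,E_n$ covering $G^{(0)}$ by the $U_{E_j}$. Then $u \in U_{E_j}$ for some $j$, which by definition of $U_{E_j} = r(E_j U)$ means there exists $\alpha \in E_j$ with $s(\alpha) \in U$ and $r(\alpha) = u$. Inverting, $\alpha^{-1} \in G_u$ and $r(\alpha^{-1}) = s(\alpha) \in U$, so $r(\alpha^{-1}) \in r(G_u) \cap U$, proving density.

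I do not anticipate a real obstacle here. The only mild subtlety is keeping the direction of arrows straight: the orbit of $u$ is $r(G_u)$, i.e.\ sources equal to $u$, whereas $U_E$ is defined with sources in $U$, so in both directions one must invert the selected arrow to translate between the two conventions. Compactness of $G^{(0)}$ is used only to pass from the pointwise cover to a finite cover in (i)$\Rightarrow$(ii).
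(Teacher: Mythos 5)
Your proof is correct and uses the same ingredients as the paper's: density of the orbits $r(G_v)$, inversion of arrows to switch between "source in $U$" and "source equal to $v$", enclosure of the resulting arrow in an open bisection, openness of $U_E$, and compactness of $\Gunit$ to extract a finite subcover. If anything it is slightly more streamlined: the paper proves (i)$\Rightarrow$(ii) by contradiction, intersecting the orbit of a putatively uncovered point with the open union of all the sets $U_E$ and then composing two arrows, whereas you apply density of $r(G_v)$ directly to $U$ at each point $v$ and invert a single arrow, which gives the pointwise cover immediately.
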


\begin{proof}
$(i)\Rightarrow (ii)$: Let $U\subseteq \Gunit$ be open. For each finite collection of open
bisections $\mathcal{E}=\{E_1,\dots, E_m\}\subseteq\B$ we set
\[U_{\mathcal{E}}=\bigcup_{E\in\mathcal{E}}U_{E}.\]
We claim that $\bigcup_{\mathcal{E}}U_{\mathcal{E}}=\Gunit$ where the union runs over all
finite collections $\mathcal{E}$ of open bisections. Suppose the claim is false, then
there is a unit $u\in \Gunit\setminus\cup_{\mathcal{E}}U_{\mathcal{E}}$. Since $r(G_u)$
is dense and  $\cup_{\mathcal{E}}U_{\mathcal{E}}$ is open we know that $r(G_u)\cap
\left(\cup_{\mathcal{E}}U_{\mathcal{E}}\right)\neq\emptyset$. Let $\alpha\in G_u$ with
$r(\alpha)\in U_{\mathcal{F}}$ for some finite collection $\mathcal{F}\subseteq\B$. So
there is a bisection $F\in\mathcal{F}$ with $r(\alpha)\in E_F$. This means that there is
a $\beta\in F$ with $s(\beta)\in U$ and $r(\alpha)=r(\beta)$. Now set
$\gamma=\alpha^{-1}\beta$. It follows that
\[s(\gamma)=s(\beta)\in U,\quad \text{and}\quad r(\gamma)=r(\alpha^{-1})=s(\alpha)=u.\]
Now let $E$ be an open bisection containing $\alpha^{-1}$. Then $H=EF$ is an open
bisection containing $\gamma$, and $u\in U_H$ since $\gamma\in H$ and $r(\gamma)=u$ with
$s(\gamma)\in U$. This contradicts the fact that
$u\notin\cup_{\mathcal{E}}U_{\mathcal{E}}$. The claim is thus proved.

Compactness of $\Gunit$ implies that $\Gunit=\cup_{j=1}^{J}U_{\mathcal{E}_{j}}$ where the
$\mathcal{E}_j$ are finite collections of open bisections. Let
$\mathcal{E}=\cup_{j=1}^{J}\mathcal{E}_j$, which is again a finite collection. Therefore,
\[\Gunit=U_{\mathcal{E}}=\bigcup_{E\in\mathcal{E}}U_{E}.\]

$(ii)\Rightarrow (i)$: Let $u\in \Gunit$, we want to show that
$\overline{r(G_u)}=\Gunit$. If not, set $U=\Gunit\setminus\overline{r(G_u)}$, a non-empty
open set in $\Gunit$. By our assumption there are open bisections $E_1,\dots, E_n$ such
that
\[\bigcup_{j=1}^{n}U_{E_j}=\Gunit.\]
Let $i$ be such that $u\in U_{E_i}$. Then there is an $\alpha$ in $E_i$ with
$s(\alpha)\in U$ and $r(\alpha)=u$. This implies that $\alpha^{-1}\in G_u$ since
$s(\alpha^{-1})=r(\alpha)=u$. However,
\[\overline{r(G_u)}\supseteq r(G_u)\ni r(\alpha^{-1})=s(\alpha)\in U\]
which contradicts the fact that $U\cap\overline{r(G_u)}=\emptyset$.
\end{proof}

We now turn our attention to the ample case. Let $G$ be an \'{e}tale groupoid with
compact and totally disconnected unit space $\Gunit$. If $f:\Gunit\rightarrow\mathbb{Z}$
is a continuous integer-valued function, its range is a finite subset of $\mathbb{Z}$ and
we can therefore express $f$ as a finite sum
\[f=\sum_{k=1}^{l}m_k\ch_{U_k}\]
where $m_k\in\mathbb{Z}$ and the $U_k$ are closed and open subsets of $\Gunit$. If $E$ is
a compact and open bisection in $G$, then combining
Facts~\ref{factlinear},~\ref{factchar}, and~\ref{factcts} we get
\[Ef=E\left(\sum_{k=1}^{l}m_k\ch_{U_k}\right)=\sum_{k=1}^{l}m_k\ch_{r(EU_k)}\]
which is again a continuous integer-valued function on $\Gunit$.

\begin{proposition}\label{min2}
Let $G$ be an \'{e}tale groupoid with compact unit space
$\Gunit$. The following are equivalent.
\begin{enumerate}
\item [(i)] $G$ is minimal.
\item [(ii)] For every non-empty closed and open $U\subseteq \Gunit$, there are clopen bisections $E_1,\dots, E_n$ such that
\[\bigcup_{j=1}^{n}U_{E_j}=\Gunit.\]
\item [(iii)] For every non-zero $f\in C(\Gunit,\mathbb{Z})^+$, there are clopen bisections $E_1,\dots, E_n$ such that
\[\sum_{j=1}^{n}E_jf\geq\ch_{\Gunit}.\]
\end{enumerate}
\end{proposition}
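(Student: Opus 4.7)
The plan is to bootstrap from Proposition~\ref{min1}, which already handles the analogous statement with \emph{open} (rather than clopen) bisections, and to refine to the clopen level using the ample structure: the existence of a basis of compact open bisections, total disconnectedness of $\Gunit$, and closure of $\C$ under the operation $(F,U)\mapsto F\cdot U$ for clopen $U\subseteq\Gunit$. I expect $(i)\Rightarrow(ii)$ to be the main step; the implications $(ii)\Rightarrow(i)$ and $(ii)\Leftrightarrow(iii)$ should fall out almost immediately from Fact~\ref{factchar} together with linearity and positivity of the operation $g\mapsto Eg$.

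For $(i)\Rightarrow(ii)$, fix a nonempty clopen $U\subseteq\Gunit$ and, via Proposition~\ref{min1}, produce open bisections $E_1,\dots,E_m$ with $\bigcup_j U_{E_j}=\Gunit$. For each $u\in\Gunit$, pick $j$ and $\alpha\in E_j$ with $r(\alpha)=u$ and $s(\alpha)\in U$. Since $G$ is ample, choose a compact open bisection $F_u\in\C$ with $\alpha\in F_u\subseteq E_j$; now replace $F_u$ by $F_u\cdot U=F_u\cap s^{-1}(U)$, which remains compact open (because $U$ is clopen, as noted in the paragraph following Fact~\ref{factcts}) and still contains $\alpha$ while now satisfying $s(F_u)\subseteq U$. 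Hence $u\in U_{F_u}$, so $\{U_{F_u}:u\in\Gunit\}$ is an open cover of the compact space $\Gunit$; any finite subcover $F_1,\dots,F_n$ supplies the desired clopen bisections.

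The remaining implications are routine. For $(ii)\Rightarrow(i)$, given any nonempty open $W\subseteq\Gunit$, total disconnectedness provides a nonempty clopen $V\subseteq W$; the clopen bisections supplied by (ii) applied to $V$ are in particular open bisections and satisfy $V_{E_j}\subseteq W_{E_j}$, so $\bigcup_j W_{E_j}=\Gunit$, whence Proposition~\ref{min1}$(ii)\Rightarrow(i)$ yields minimality. For $(iii)\Rightarrow(ii)$, apply (iii) to $f=\ch_U$ and invoke Fact~\ref{factchar} to conclude $\sum_j\ch_{U_{E_j}}\geq\ch_{\Gunit}$, i.e.\ $\bigcup_j U_{E_j}=\Gunit$. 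Finally, for $(ii)\Rightarrow(iii)$, let $f\in C(\Gunit,\mathbb{Z})^+$ be nonzero; the set $U:=\{u:f(u)\geq 1\}$ is clopen (by continuity and integer-valuedness) and nonempty, with $\ch_U\leq f$. Applying (ii) to $U$ and using linearity (Fact~\ref{factlinear}) and positivity-preservation of $g\mapsto Eg$ (immediate from its pointwise definition), we obtain $\sum_j E_j f\geq\sum_j E_j\ch_U=\sum_j\ch_{U_{E_j}}\geq\ch_{\Gunit}$, as required.
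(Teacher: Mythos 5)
Your proof is correct and follows essentially the same route as the paper's: both reduce $(ii)\Rightarrow(i)$ and $(ii)\Leftrightarrow(iii)$ to Proposition~\ref{min1}, Fact~\ref{factchar}, and the linearity and monotonicity of $g\mapsto Eg$, and both (like the paper, which implicitly invokes the standing ample/totally-disconnected hypothesis from the surrounding discussion) use the basis of compact open bisections for $(i)\Rightarrow(ii)$. The only difference is cosmetic: where the paper re-runs the argument of Proposition~\ref{min1} choosing compact open bisections throughout, you instead invoke Proposition~\ref{min1} as a black box and then refine each open bisection to a compact open one by a covering-and-compactness step, which is an equally valid and arguably tidier way to execute the same idea.
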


\begin{proof}
$(i)\Rightarrow (ii)$: This direction follows the same lines as Proposition~\ref{min1}
except we choose compact open (as oppose to merely open) bisections every time.

$(ii)\Rightarrow (i)$:  If  $u\in \Gunit$ and $\overline{r(G_u)}\neq\Gunit$, we choose
$U\subseteq\Gunit\setminus\overline{r(G_u)}$, a non-empty close and open subset (this is
possible because these form a base for the topology of $\Gunit$. The rest of the proof is
identical to that of Proposition~\ref{min1}.

$(ii)\Rightarrow (iii)$: If $f\in C(\Gunit,\mathbb{Z})^+$ is non-zero, we may write
$f=\sum_{k=1}^{l}m_k\ch_{U_k}$ where the $m_k$ are strictly positive integers and
$U_k\subseteq\Gunit$ are closed and open and non-empty. Setting $U=U_1$, there are compact
and open bisections $E_1,\dots, E_n$ with
\[\bigcup_{j=1}^{n}U_{E_j}=\Gunit.\]
Note that
\[E_jf=E_j\left(\sum_{k=1}^{l}m_k\ch_{U_k}\right)=\sum_{k=1}^{l}m_k\ch_{r(E_jU_k)}\geq  m_1\ch_{r(E_jU_1)}\geq \ch_{r(E_jU)}.\]
Therefore
\[\ch_{\Gunit}=\ch_{\cup_{j=1}^{n}U_{E_j}}\leq \sum_{j=1}^{n}\ch_{U_{E_j}}=\sum_{j=1}^{n}\ch_{r(E_jU)}\leq \sum_{j=1}^{n}E_jf.\]
$(iii)\Rightarrow (ii)$: Let $\emptyset\neq U\subseteq\Gunit$ be closed and open, and set
$f=\ch_{U}$. By our assumption there are bisections $E_1,\dots, E_n\in\C$ with
$\sum_{j=1}^{n}E_jf\geq\ch_{\Gunit}$. Then
\[\ch_{\Gunit}\leq \sum_{j=1}^{n}E_jf=\sum_{j=1}^{n}E_j\ch_{U}=\sum_{j=1}^{n}\ch_{r(E_jU)}=\sum_{j=1}^{n}\ch_{U_{E_j}},\]
which shows that $\Gunit\subseteq \cup_{j=1}^{n}U_{E_j}$.
\end{proof}

Proposition~\ref{min2} part (iii) will be instrumental in constructing faithful traces on
completely non-paradoxical groupoid \cstar-algebras in the following section.

\section{The type semigroup of an ample groupoid}\label{sec:SG}

Given a transformation group $(X,\Gamma)$ where $\Gamma$ is discrete and $X$ is the Cantor set, R{\o}rdam and Sierakowski~\cite{RS} construct a type semigroup $S(X,\Gamma)$ that witnesses the pure infiniteness of the reduced \cstar-crossed product $C(X)\rtimes_{\lambda}\Gamma$. This construction is much in the spirit of the classical type semigroup of an arbitrary action studied in~\cite{Wa}. Given a (not necessarily commutative) \cstar-dynamical system $(A,\Gamma, \alpha)$ with $A$ stably finite and admitting refinement properties, the first author constructs in~\cite{Ra2} a type semigroup $S(A,\Gamma)$ that distinguishes stable finitness versus pure infiniteness of the reduced crossed product $A\rtimes_{\lambda}\Gamma$. In this section we construct the analogue in the setting of ample groupoids. We will restrict our attention to \'{e}tale, ample groupoids so that the unit space $\Gunit$ is totally disconnected. It then follows
that the set of compactly-supported, integer-valued, continuous functions on the unit space $C_c(\Gunit,\mathbb{Z})$ is a dimension group with positive cone $C_c(\Gunit,\mathbb{Z})^+$. We will show in Section~\ref{sec:reconcile} that our type semigroup $S(G)$ for a transformation groupoid coincides with R{\o}rdam and Sierakowski's semigroup $S(X, \Gamma)$.

\begin{definition}\label{defnrelation}
Let $G$  be an \'{e}tale and ample groupoid. Define a relation on
$C_c(\Gunit,\mathbb{Z})^+$ as follows: for $f,g\in C_c(\Gunit,\mathbb{Z})^+$, set $f\sim_{G}
g$ if there are compact open bisections $E_1,\dots, E_n\in\C$  with
\begin{equation}\label{relation}
f=\sum_{i=1}^{n}\mathds{1}_{s(E_i)},\quad \text{and}\quad g=\sum_{i=1}^{n}\mathds{1}_{r(E_i)}.
\end{equation}
\end{definition}

\begin{proposition}
Let $G$ be an \'{e}tale, ample groupoid. Then the relation on $C_c(\Gunit,\mathbb{Z})^+$
described in Definition~\ref{defnrelation} is an equivalence relation.
\end{proposition}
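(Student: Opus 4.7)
The plan is to verify reflexivity, symmetry, and transitivity in turn, with the latter being the only substantive step. For reflexivity, I would use that $\Gunit$ is totally disconnected and $f \in C_c(\Gunit,\mathbb{Z})^+$ has compact support, so $f$ can be written as $\sum_{i=1}^n \ch_{U_i}$ for some compact open $U_i \subseteq \Gunit$. Any compact open $U \subseteq \Gunit$ belongs to $\C$ and satisfies $s(U) = r(U) = U$, so this decomposition witnesses $f \sim_G f$. For symmetry, given $f = \sum_i \ch_{s(E_i)}$, $g = \sum_i \ch_{r(E_i)}$ with $E_i \in \C$, the bisections $E_i^{-1} \in \C$ satisfy $s(E_i^{-1}) = r(E_i)$ and $r(E_i^{-1}) = s(E_i)$, giving $g \sim_G f$.

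The main task is transitivity. Suppose $f \sim_G g$ via $E_1, \dots, E_n \in \C$ and $g \sim_G h$ via $F_1, \dots, F_m \in \C$. Since $\{r(E_i)\}_i \cup \{s(F_j)\}_j$ is a finite family of compact open subsets of the totally disconnected space $\Gunit$, I would choose a finite partition $\mathcal{W}$ of $\supp(g)$ into compact open sets that refines both $\{r(E_i)\}$ and $\{s(F_j)\}$ (obtainable from the atoms of the Boolean algebra generated by these sets). For each $W \in \mathcal{W}$, set
\[
I_W = \{i : W \subseteq r(E_i)\}, \qquad J_W = \{j : W \subseteq s(F_j)\}.
\]
Evaluating $g = \sum_i \ch_{r(E_i)} = \sum_j \ch_{s(F_j)}$ at any point of $W$ shows $|I_W| = |J_W|$, so we may fix a bijection $\sigma_W : I_W \to J_W$ for each $W$.

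For $i \in I_W$ write $E_i^W := E_i \cap r^{-1}(W)$ and $F_{j}^W := F_j \cap s^{-1}(W)$, each a compact open bisection. Define
\[
H_{i,W} := F_{\sigma_W(i)}^{W}\, E_i^{W} \in \C,
\]
using that $\C$ is closed under products. Since $r(E_i^W) = W = s(F_{\sigma_W(i)}^W)$, the composition is well-behaved and yields $s(H_{i,W}) = s(E_i^W)$ and $r(H_{i,W}) = r(F_{\sigma_W(i)}^W)$. For fixed $i$, the sets $\{s(E_i^W)\}_{W \in \mathcal{W},\, i \in I_W}$ partition $s(E_i)$, so $\sum_{W : i \in I_W} \ch_{s(E_i^W)} = \ch_{s(E_i)}$, and summing over $i$ gives $\sum_{W,\, i \in I_W} \ch_{s(H_{i,W})} = f$. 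Similarly, since $\sigma_W$ is a bijection, $\sum_{W,\, i \in I_W} \ch_{r(H_{i,W})} = \sum_{W,\, j \in J_W} \ch_{r(F_j^W)} = h$. Thus the family $\{H_{i,W}\}$ witnesses $f \sim_G h$.

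The only nontrivial step is transitivity, and within it the main point is the existence of the common refinement $\mathcal{W}$ and the cardinality equality $|I_W| = |J_W|$; everything else is bookkeeping about sources and ranges, using that $\C$ is closed under intersection, inversion, and multiplication.
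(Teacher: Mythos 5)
Your proposal is correct and follows essentially the same route as the paper: reflexivity via decomposing $f$ into indicators of compact open subsets of $\Gunit$ (which are bisections with $r=s=\mathrm{id}$), symmetry via $E_i^{-1}$, and transitivity by passing to a common refinement of the two decompositions of $g$, matching the pieces, and composing the corresponding restricted bisections $F^{W}_{\sigma_W(i)}E_i^{W}$ (the paper's $B_i = F_iE_i$ after relabelling). The only difference is that the paper simply invokes the refinement property of $C_c(\Gunit,\mathbb{Z})^+$ to produce the sets $Z_{i,j}$ with $\ch_{r(E_i)}=\sum_j\ch_{Z_{i,j}}$ and $\ch_{s(F_j)}=\sum_i\ch_{Z_{i,j}}$, whereas you construct that refinement explicitly from the atoms of the Boolean algebra generated by the $r(E_i)$ and $s(F_j)$ together with the counting identity $|I_W|=|J_W|$ and a choice of bijections $\sigma_W$ -- a self-contained proof of the same step.
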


\begin{proof}
If $f\in C_c(\Gunit,\mathbb{Z})^+$, one can write $f=\sum_{j}\ch_{C_j}$ where the
$C_j\subseteq G^{(0)}$ are compact and open. Note that for each $j$, $C_j$ is actually
compact open bisection with $s(C_j)=C_j$ and $r(C_j)=C_j$. It easily follows that
$f\sim_{G}f$.

Let $f\sim g$ as in~\ref{relation}. For each $i$ set $F_i=E_i^{-1}$, which also belong to
$\C$. Then $r(F_i)=r(E_i^{-1})=s(E_i)$, and $s(F_i)=s(E_i^{-1})=r(E_i)$, whence $g\sim f$
since
\[g=\sum_{i=1}^{n}\mathds{1}_{r(E_i)}=\sum_{i=1}^{n}\mathds{1}_{s(F_i)},\quad \text{and}\quad f=\sum_{i=1}^{n}\mathds{1}_{s(E_i)}=\sum_{i=1}^{n}\mathds{1}_{r(F_i)}.\]

To prove transitivity, we suppose that $f\sim g\sim h$ via
\begin{equation}\label{transitive} f=\sum_{i=1}^{m}\ch_{s(E_i)},\quad g=\sum_{i=1}^{m}\ch_{r(E_i)},\quad g=\sum_{j=1}^{n}\ch_{s(F_j)},\quad h=\sum_{j=1}^{n}\ch_{r(F_j)},
\end{equation}
where the $E_i$ and $F_j$ are compact open bisections.

\begin{claim}
In the equations~\ref{transitive}, we may assume that $m=n$ and the bisections satisfy $r(E_i)=s(F_i)$ for $i=1,\dots,m$.
\end{claim}

By applying the refinement property to the two expressions for $g$ we can find compact
open subsets $\{Z_{i,j}\}_{i,j}$ of $\Gunit$ satisfying
\[\ch_{r(E_i)}=\sum_{j=1}^{n}\ch_{Z_{i,j}},\quad \ch_{s(F_j)}=\sum_{i=1}^{m}\ch_{Z_{i,j}}\]
for every $i$ and $j$. Note that $\bigsqcup_{j=1}^{n}Z_{i,j}=r(E_i)$ and $\bigsqcup_{i=1}^{m}Z_{i,j}=s(F_j)$. For each pair $(i,j)$ consider the compact open bisections
\[G_{i,j}:=E_i\cap r^{-1}(Z_{i,j})\subseteq E_i,\quad\text{and}\quad H_{i,j}:=F_j\cap s^{-1}(Z_{i,j})\subseteq F_j.\]
We then have $E_i=\bigsqcup_{j=1}^{n}G_{i,j}$ and $F_j=\bigsqcup_{i=1}^{m}H_{i,j}$. Thus
\begin{align*}f&=\sum_{i=1}^{m}\ch_{s(E_i)}=\sum_{i,j}\ch_{s(G_{i,j})},\quad g=\sum_{i=1}^{m}\ch_{r(E_i)}=\sum_{i,j}\ch_{r(G_{i,j})},\\
g&=\sum_{j=1}^{n}\ch_{s(F_j)}=\sum_{i,j}\ch_{s(H_{i,j})},\quad h=\sum_{j=1}^{n}\ch_{r(F_j)}=\sum_{i,j}\ch_{r(H_{i,j})}.
\end{align*}
Note that $r(G_{i,j})=Z_{i,j}=s(H_{i,j})$. This proves the Claim.

Now look at the compact open bisections $B_i=F_iE_i$, for $1\leq i\leq m$. Note that $s(B_i)=s(E_i)$ and $r(B_i)=r(F_i)$. We end up with $f\sim h$ since
\[f=\sum_{i=1}^{m}\ch_{s(E_i)}=\sum_{i=1}^{m}\ch_{s(B_i)},\quad h=\sum_{i=1}^{m}\ch_{r(F_i)}=\sum_{i=1}^{m}\ch_{r(B_i)},\]
and this finishes the proof.

\end{proof}

We can now make the following definition.

\begin{definition}\label{typesemigroup}
Let $G$ be an  \'{e}tale and ample groupoid. We define the \emph{type semigroup of $G$}
as  $S(G):=C_c(\Gunit,\mathbb{Z})^{+}/\sim_{G}$, and write $[f]_{G}$ for the equivalence
class with representative $f\in C_c(\Gunit,\mathbb{Z})^{+}$.
\end{definition}

\begin{remark}\label{rmk:BL matchup}
Our definition of the type semigroup of $G$ is related to the definition given by B\"onicke and Li in \cite[Definition~5.3]{BL}, though our definition is somewhat more algebraic and does not involve amplifying and
passing to levels of $\Gunit \times \mathbb{N}$. The apparent discrepancy can be
resolved using Proposition~\ref{prp:stable equiv} below: Write $\mathcal{R} := \mathbb{N}
\times \mathbb{N}$ regarded as a discrete principal groupoid, and let $\mathcal{K}G$
denote the groupoid $G \times \mathcal{R}$. Identifying $\mathcal{R}^{(0)}$ with
$\mathbb{N}$ in the obvious way, it is straightforward to see that the type semigroup of
$G$ as defined in \cite[Definition~5.3]{BL} is precisely the type semigroup of
$\mathcal{K}G$ as defined by Definition~\ref{typesemigroup}. Proposition~\ref{prp:stable
equiv} shows that the type semigroups, in the sense of our definition, of $\mathcal{K}G$
and of $G$ coincide, so we see that our definition of the type semigroup of $G$ agrees
with \cite[Definition~5.3]{BL} up to canonical isomorphism.
\end{remark}

We can define addition of classes simply by $[f]_G+[g]_\alpha:=[f+g]_G$ for $f,g$ in
$C_c(\Gunit,\mathbb{Z})^{+}$. It is routine to check that this operation is well defined;
indeed if $f\sim_{G} f'$ and $g\sim_{G} g'$ via
\[f=\sum_{i=1}^{n}\ch_{s(E_i)},\quad f'=\sum_{i=1}^{n}\ch_{r(E_i)},\quad g=\sum_{j=1}^{m}\ch_{s(F_j)},\quad g'=\sum_{j=1}^{m}\ch_{r(F_j)}, \]
then
\begin{align*} [f]_G+[g]_G &=[f+g]_G=\bigg[\sum_{i=1}^{n}\ch_{s(E_i)}+\sum_{j=1}^{m}\ch_{s(F_j)}\bigg]_{G}
=\bigg[\sum_{i=1}^{n}\ch_{r(E_i)}+\sum_{j=1}^{m}\ch_{r(F_j)}\bigg]_{G}\\
&=[f'+g']_{G}=[f']_{G}+[g']_{G}.
\end{align*}
We make a few elementary observations about $S(G)$ for such an ample, \'{e}tale $G$.
Firstly, $S(G)$  is not only a semigroup but an abelian monoid as $[0]_{G}$ is clearly
the neutral additive element. Impose the algebraic ordering on $S(G)$, that is, set
$[f]_{G}\leq [g]_{G}$ if there is an $h\in C_c(\Gunit,\mathbb{Z})^+$ with
$[f]_{G}+[h]_{G}=[g]_{G}$. This gives $S(G)$ the structure of a pre-ordered abelian
monoid. Note that if $f,g\in C_c(\Gunit,\mathbb{Z})^{+}$ with $f\leq g$ (in the ordering
of $C_c(\Gunit,\mathbb{Z})$) then $[f]_G\leq[g]_{G}$ in $S(G)$. To see this, $f\leq g$
implies $g-f:=h\in C_c(\Gunit,\mathbb{Z})^+$, so $[g]_G=[f+h]_G=[f]_G+[h]_\alpha$ which
gives $[f]_G\leq[g]_G$. Next, we observe that if $[f]_G=[0]_G$, for some $f$ in
$C_c(\Gunit,\mathbb{Z})^{+}$, then in fact $f=0$ in $C_c(\Gunit,\mathbb{Z})$. Indeed, if
$f=\sum_{i}\ch_{s(E_i)}$, and $\sum_{i}\ch_{r(E_i)}=0$ for some bisections $E_1,\dots,
E_n\in\C$ , then $r(E_i)=\emptyset$ for every $i$ which implies $E_i=\emptyset$ and
$s(E_i)=\emptyset$ for all $i$. All together, there is a surjective, order preserving,
faithful, monoid homomorphism
\[\pi: C_c(\Gunit,\mathbb{Z})^+\longrightarrow S(G)\quad\mbox{given by}\quad \pi(f)=[f]_G.\]

In the remainder of this section we show that the type semigroup of an ample groupoid is an invariant for groupoid equivalence in the sense that equivalent groupoids have isomorphic type semigroups. This will also enable us to extend our characterization of stable finiteness for reduced $\cstar$-algebras of minimal ample groupoids from the situation of groupoids
with compact unit space to the general case.

We first show that the type semigroup is an isomorphism invariant.

\begin{proposition}\label{isomorphisminvariant}
Let $\varphi:G\rightarrow H$ be an isomorphism of \'{e}tale and ample groupoids. The
induced map $\overline{\varphi}: C_c(\Hunit,\mathbb{Z})^+\longrightarrow
C_c(\Gunit,\mathbb{Z})^+$ given by $\overline{\varphi}(f)=f\circ\varphi|_{\Gunit}$ drops
to an isomorphism of monoids $\phi: S(H)\rightarrow S(G)$.
\end{proposition}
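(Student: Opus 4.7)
My plan is to verify three things in succession: first that $\overline{\varphi}$ is a well-defined monoid isomorphism at the level of the function monoids $C_c(\Hunit,\mathbb{Z})^+$ and $C_c(\Gunit,\mathbb{Z})^+$; second that $\overline{\varphi}$ respects the equivalence relations defining the type semigroups, so it drops to a monoid homomorphism $\phi\colon S(H)\to S(G)$; and third that the same argument applied to $\varphi^{-1}$ produces a two-sided inverse. The central observation that drives all three steps is that a groupoid isomorphism of \'etale groupoids restricts to a homeomorphism $\varphi|_{\Gunit}\colon \Gunit \to \Hunit$ and, because it commutes with $r$, $s$, multiplication and inversion, carries the collection $\C_G$ of compact open bisections in $G$ bijectively onto $\C_H$, with $s(\varphi(E))=\varphi(s(E))$ and $r(\varphi(E))=\varphi(r(E))$ for every $E\in\C_G$.

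For the first step I would note that $\varphi|_{\Gunit}$ being a homeomorphism makes precomposition $f\mapsto f\circ\varphi|_{\Gunit}$ a bijection of $C_c(\Hunit,\mathbb{Z})^+$ onto $C_c(\Gunit,\mathbb{Z})^+$ that preserves pointwise addition and positivity; the inverse is precomposition with $\varphi^{-1}|_{\Hunit}$. Concretely, $\overline{\varphi}(\ch_U) = \ch_{\varphi^{-1}(U)}$ for every compact open $U\subseteq\Hunit$, since $\varphi$ is a homeomorphism between the unit spaces.

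For the second step, suppose $f\sim_H g$ with witnesses $E_1,\dots,E_n \in \C_H$, so that $f=\sum_{i=1}^n \ch_{s(E_i)}$ and $g=\sum_{i=1}^n \ch_{r(E_i)}$. Set $F_i := \varphi^{-1}(E_i)\in\C_G$. Using the compatibility of $\varphi^{-1}$ with the source and range maps, together with $\overline{\varphi}(\ch_U) = \ch_{\varphi^{-1}(U)}$, one computes
\[
\overline{\varphi}(f) = \sum_{i=1}^n \ch_{\varphi^{-1}(s(E_i))} = \sum_{i=1}^n \ch_{s(F_i)}, \qquad \overline{\varphi}(g) = \sum_{i=1}^n \ch_{r(F_i)},
\]
so $\overline{\varphi}(f)\sim_G \overline{\varphi}(g)$. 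Hence $\phi([f]_H):=[\overline{\varphi}(f)]_G$ is well defined, and additivity is inherited from $\overline{\varphi}$.

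For the third step, applying the same construction to $\varphi^{-1}\colon H\to G$ yields a monoid homomorphism $\psi\colon S(G)\to S(H)$. Since $\overline{\varphi}$ and $\overline{\varphi^{-1}}$ are mutual inverses at the level of functions, $\psi\circ\phi$ and $\phi\circ\psi$ are the respective identities on $S(H)$ and $S(G)$, so $\phi$ is a monoid isomorphism. None of these steps presents a genuine obstacle; the only point requiring any care is the bookkeeping that $\varphi$ really does send compact open bisections in $G$ to compact open bisections in $H$ and intertwines $r$ and $s$, and this is immediate from the definition of an isomorphism of topological groupoids together with $\varphi(\Gunit)=\Hunit$.
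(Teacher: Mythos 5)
Your proof is correct and follows essentially the same route as the paper's: identify $\overline{\varphi}$ as a monoid isomorphism via $\overline{\varphi}(\ch_U)=\ch_{\varphi^{-1}(U)}$, transport the witnessing bisections through $\varphi^{-1}$ using the intertwining of $r$ and $s$ to see that $\sim_H$ is carried to $\sim_G$, and obtain the inverse by symmetry from $\varphi^{-1}$. No gaps.
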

\begin{proof}
Since $\varphi$ is a homeomorphism, its restriction
$\varphi|_{\Gunit}:\Gunit\rightarrow\Hunit$ is a homeomorphism, so $\overline{\varphi}$
is a well-defined monoid isomorphism. Moreover, if $A\subseteq \Hunit$ is a compact open subset, then $\overline{\varphi}(\ch_A)=\ch_{\phi^{-1}(A)}$.

If $f\sim_{H}g$ in $C_c(\Hunit,\mathbb{Z})^+$ with compact open bisections $E_1,\dots, E_n$ in $H$ satisfying
\[
    f=\sum_{i=1}^{n}\ch_{s(E_i)},\quad\text{and}\quad g=\sum_{i=1}^{n}\ch_{r(E_i)},
\]
then $\overline{\varphi}(f)\sim_{G}\overline{\varphi}(g)$ since
\[
\overline{\varphi}(f)
    =\sum_{i=1}^{n}\ch_{\psi(s(E_i))}
    =\sum_{i=1}^{n}\ch_{s(\psi(E_i))},
        \quad\text{and}\quad
\overline{\varphi}(g)
    =\sum_{i=1}^{n}\ch_{\psi(r(E_i))}
    =\sum_{i=1}^{n}\ch_{r(\psi(E_i))}.
\]

Hence there is a homomorphism $\phi: S(H)\rightarrow S(G)$ such that
$\phi([f]_H)=[\overline{\varphi}(f)]_G$. The same argument applied to $\varphi^{-1} : H \to G$ yields an inverse to $\phi$, so $\phi$ is an isomorphism.
\end{proof}

To see that equivalent ample groupoids have isomorphic type semigroups, we will appeal to the groupoid analogue \cite{CRS} of the Brown--Green--Rieffel theorem \cite{BGR}. We first need to know that the type semigroup is invariant under stabilization of groupoids.

In the following, we write $\mathcal{R}$ for the discrete equivalence relation $\mathcal{R}=\mathbb{N} \times \mathbb{N}$ regarded as a principal ample groupoid with unit space $\mathcal{R}^{(0)}=\{(n,n)\ |\ n\in\mathbb{N}\}$ identified with $\mathbb{N}$. The stabilization of a groupoid $G$ is the product $\mathcal{K}G:=G\times\mathcal{R}$ with its natural groupoid structure. Note that $\mathcal{K}\Gunit=\Gunit\times\mathcal{R}^{(0)}$. If $G$ is ample, one notes that $E\times\{(i,j)\}\subseteq\mathcal{K}G$ is a compact open bisection if and only if $E\subseteq G$ is a compact open bisection. Moreover, a brief compactness argument shows that every compact open bisection $F\subseteq\mathcal{K}G$ can be written as a disjoint union $F=\bigsqcup_{k=1}^{K}E_k\times \{(i_k, j_k)\}$ where $E_k$ are compact open bisections in $G$ and $(i_k,j_k)\in\mathcal{R}$. Likewise,  any compact open $A\subseteq\mathcal{K}\Gunit$ can be written as a disjoint union $A=\bigsqcup_{k=1}^{K}A_k\times \{(i_k, i_k)\}$ where the  $A_k$ are compact open subsets of $\Gunit$ and $i_k\in\mathbb{N}$.

We will write $\ch_{(m,n)}$ for the point-mass functions in $C_c(\mathcal{R})$. Given functions $f\in C_c(G)$ and $g\in C_c(\mathcal{R})$, we denote by $f\times g :G\times \mathcal{R}\to\mathbb{C}$ the function defined by sending $(\alpha,(m,n))\mapsto f(\alpha)g((m,n))$, where $\alpha\in G$, $(m,n)\in\mathcal{R}$. Note that the association $(f,g)\mapsto f\times g$ is a well-defined bilinear map $C_c(G)\times C_c(\mathcal{R})\to C_c(\mathcal{K}G)$. Also, if $A\subseteq G$ and $B\subseteq\mathcal{R}$ are compact open, then $\ch_{A}\times\ch_B=\ch_{A\times B}$.

\begin{proposition}\label{prp:stable equiv}
Let $G$ be an ample Hausdorff groupoid.
\begin{enumerate}
\item [(i)] For any $n\in\mathbb{N}$ and $f \in C_c(G^{(0)}, \mathbb{Z})^+$, we have
\[
[f \times \ch_{(0,0)}]_{\mathcal{K}G} = [f \times \ch_{(n,n)}]_{\mathcal{K}G}\quad\text{in}\quad S(\mathcal{K}G).
\]
\item[(ii)] There is an isomorphism of monoids $\varphi: S(G) \to
S(\mathcal{K}G)$ satisfying
\[\varphi([f]_G) = [f \times \ch_{(0,0)}]_{\mathcal{K}G},\quad f \in
C_c(G^{(0)}, \mathbb{Z})^+.\]
\end{enumerate}
\end{proposition}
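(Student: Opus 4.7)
The plan is to establish (i) by explicit construction and then use it to build the isomorphism in (ii).

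For (i), fix $n$ and write $f = \sum_{j=1}^J \ch_{C_j}$ with each $C_j \subseteq \Gunit$ compact open. Because any compact open subset of $\Gunit$ is itself a compact open bisection of $G$ (with range and source equal to that subset) and $\{(n,0)\}$ is a compact open bisection of $\mathcal{R}$, the set $E_j := C_j \times \{(n,0)\}$ is a compact open bisection of $\mathcal{K}G$ with $s(E_j) = C_j \times \{(0,0)\}$ and $r(E_j) = C_j \times \{(n,n)\}$. Summing then yields $\sum_j \ch_{s(E_j)} = f \times \ch_{(0,0)}$ and $\sum_j \ch_{r(E_j)} = f \times \ch_{(n,n)}$, so the two classes agree in $S(\mathcal{K}G)$.

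For (ii), define $\widetilde{\varphi} : C_c(\Gunit,\mathbb{Z})^+ \to C_c(\mathcal{K}\Gunit,\mathbb{Z})^+$ by $\widetilde{\varphi}(f) := f \times \ch_{(0,0)}$; this is evidently a monoid homomorphism. To see that $\widetilde{\varphi}$ descends to a well-defined monoid homomorphism $\varphi : S(G) \to S(\mathcal{K}G)$, observe that any compact open bisection $E$ of $G$ lifts to the compact open bisection $E \times \{(0,0)\}$ of $\mathcal{K}G$, whose source and range are $s(E) \times \{(0,0)\}$ and $r(E) \times \{(0,0)\}$; thus any witness of $f \sim_G g$ in $G$ lifts to a witness of $\widetilde{\varphi}(f) \sim_{\mathcal{K}G} \widetilde{\varphi}(g)$. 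For surjectivity, any $F \in C_c(\mathcal{K}\Gunit,\mathbb{Z})^+$ can be written as $F = \sum_k \ch_{A_k \times \{(i_k,i_k)\}}$ using the decomposition of compact opens in $\mathcal{K}\Gunit$ recorded before the proposition, and then part~(i) gives $[F]_{\mathcal{K}G} = \sum_k [\ch_{A_k} \times \ch_{(0,0)}]_{\mathcal{K}G} = \varphi\bigl(\bigl[\sum_k \ch_{A_k}\bigr]_G\bigr)$.

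The main obstacle is injectivity. Suppose $\widetilde{\varphi}(f) \sim_{\mathcal{K}G} \widetilde{\varphi}(g)$ via compact open bisections $F_1, \dots, F_m$ of $\mathcal{K}G$, so that $\sum_i \ch_{s(F_i)} = f \times \ch_{(0,0)}$ and $\sum_i \ch_{r(F_i)} = g \times \ch_{(0,0)}$. Both right-hand sides vanish off $\Gunit \times \{(0,0)\}$, so each $s(F_i)$ and each $r(F_i)$ is contained in $\Gunit \times \{(0,0)\}$. Using the decomposition of compact open bisections in $\mathcal{K}G$ recorded before the proposition, write each $F_i$ as a disjoint union $F_i = \bigsqcup_k E_{i,k} \times \{(j_{i,k}, l_{i,k})\}$ of compact open bisections; computing source and range, the containment forces $j_{i,k} = l_{i,k} = 0$ whenever $E_{i,k}$ is non-empty. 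Consequently $F_i = E_i \times \{(0,0)\}$ for a compact open bisection $E_i$ of $G$, and reading off the identities $\sum_i \ch_{s(F_i)} = f \times \ch_{(0,0)}$ and $\sum_i \ch_{r(F_i)} = g \times \ch_{(0,0)}$ on the $(0,0)$-fibre yields $f = \sum_i \ch_{s(E_i)}$ and $g = \sum_i \ch_{r(E_i)}$, so $f \sim_G g$. The delicate point is purely bookkeeping: one must track how a general $\mathcal{K}G$-bisection decomposes and use the support constraint to collapse it onto the $(0,0)$-fibre.
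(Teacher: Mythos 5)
Your proof is correct. Part (i), the well-definedness of $\varphi$ via the lift $E \mapsto E \times \{(0,0)\}$, and the surjectivity argument via the fibrewise decomposition of compact open subsets of $\mathcal{K}\Gunit$ all match the paper's proof essentially line for line. Where you genuinely diverge is injectivity: you argue directly that if $f \times \ch_{(0,0)} \sim_{\mathcal{K}G} g \times \ch_{(0,0)}$ then, because the sources and ranges of the witnessing bisections must all lie in $\Gunit \times \{(0,0)\}$, the decomposition $F_i = \bigsqcup_k E_{i,k} \times \{(j_{i,k},l_{i,k})\}$ forces every index pair to be $(0,0)$, so the witnesses collapse onto the $(0,0)$-fibre and restrict to witnesses of $f \sim_G g$. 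The paper instead constructs an explicit left inverse: the fibre-summing map $\rho(h)(u) = \sum_n h(u,n)$, which is shown to respect $\sim_{\mathcal{K}G}$ and hence descends to $\tilde\rho : S(\mathcal{K}G) \to S(G)$ with $\tilde\rho \circ \varphi = \mathrm{id}$. Your route is more elementary and self-contained for this proposition (it even shows that the inclusion of the $(0,0)$-fibre detects the equivalence relation exactly), while the paper's $\rho$ has the advantage of being a globally defined map on all of $C_c(\mathcal{K}\Gunit,\mathbb{Z})^+$ that handles bisections supported on arbitrary fibres; this extra generality is what gets reused later, e.g.\ in the proof of Corollary~\ref{corollarysfcnp} to transfer the coboundary condition between $G$ and $\mathcal{K}G$. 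Either argument is acceptable here.
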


\begin{proof}
Let $n\in\mathbb{N}$ and $f\in C_c(\Gunit,\mathbb{Z})^{+}$. We can write $f$ as a finite sum $f=\sum_{k}m_k\ch_{A_k}$ with $m_k\in\mathbb{Z}^{+}$ and $A_k\subseteq\Gunit$ are compact open subsets. We then see that
\begin{align*}
f\times\ch_{(0,0)}&=\bigg(\sum_{k}m_k\ch_{A_k}\bigg)\times\ch_{(0,0)}=\sum_{k}m_k(\ch_{A_k}\times\ch_{(0,0)})=\sum_{k}m_k\ch_{A_k\times\{(0,0)\}}\\&=\sum_{k}m_k\ch_{s(A_k\times\{(n,0)\})}\sim\sum_{k}m_k\ch_{r(A_k\times\{(n,0)\})}=\sum_{k}m_k\ch_{A_k\times\{(n,n)\}}=f\times\ch_{(n,n)},
\end{align*}
which proves (i).

The map $C_c(\Gunit,\mathbb{Z})^+\to S(\mathcal{K}G)$ which sends
$f\mapsto [f\times\ch_{(0,0)}]_{\mathcal{K}G}$ is clearly well-defined and additive. If $f \sim_{G} g$ in $C_c(G, \mathbb{Z})^+$, then there exist compact open bisections $E_1,
\dots, E_n$ in $G$ such that $f = \sum_{i=1}^{n} \ch_{s(E_i)}$ and $g = \sum \ch_{r(E_i)}$. Hence
\begin{align*}
f \times \ch_{(0,0)}& =  \bigg(\sum_{i=1}^{n} \ch_{s(E_i)}\bigg)\times \ch_{(0,0)}= \sum_{i=1}^{n} \ch_{s(E_i)}\times\ch_{(0,0)}= \sum_{i=1}^{n} \ch_{s(E_i)\times\{(0,0)\}}\\ &=\sum_{i=1}^{n} \ch_{s(E_i\times\{(0,0)\})}\sim\sum_{i=1}^{n} \ch_{r(E_i\times\{(0,0)\})}=g \times \ch_{(0,0)}.
\end{align*}
There is, therefore, a well-defined monoid homomorphism
$\varphi : S(G) \to S(\mathcal{K}G)$ satisfying the description in (ii).

To see that $\varphi$ is surjective, fix $h \in C_c(\mathcal{K}\Gunit,\mathbb{Z})^+$ and write $h = \sum_{k} m_k\ch_{A_k}$ for a finite list of positive integers $m_k$ and compact open subsets $A_k\subseteq\mathcal{K}\Gunit$.  By our discussion before the statement of the Proposition we may assume that each $A_k=B_k\times\{(i_k, i_k)\}$ with $B_k\subseteq\Gunit$ compact and open. Setting $f=\sum_km_k\ch_{B_k}$ in $C_c(\Gunit,\mathbb{Z})^{+}$ we get
\begin{align*}
[h]_{\mathcal{K}G}& = \bigg[\sum_{k} m_k\ch_{B_k\times\{(i_k, i_k)\}}\bigg]=\sum_{k} m_k[\ch_{B_k}\times\ch_{(i_k, i_k)}]\stackrel{(i)}{=}\sum_{k} m_k[\ch_{B_k}\times\ch_{(0, 0)}]\\&=\sum_{k} m_k\varphi([\ch_{B_k}])=\varphi\bigg(\sum_{k} m_k[\ch_{B_k}]\bigg)=\varphi([f]_G).
\end{align*}

We show that $\varphi$ is injective by constructing a left inverse. The map $\rho:C_c(\mathcal{K}\Gunit,\mathbb{Z})^+\to C_c(\Gunit,\mathbb{Z})^+$ given by
\[
\rho(h)(u) := \sum_{n \in \mathbb{N}} h(u,n),\quad u\in\Gunit
\]
is clearly well-defined and additive. Now if $F=E\times\{(i,j)\}$ is a compact open bisection in $\mathcal{K}G$, we see that for all $u\in\Gunit$
\begin{align*}
\rho(\ch_{s(F)})(u)
    &=\sum_n\ch_{s(F)}(u,n) =\sum_n\ch_{s(E)\times\{(j,j)\}}(u,n)\\
    &=\sum_n(\ch_{s(E)}\times\ch_{(j,j)})(u,n)=\ch_{s(E)}(u),
\end{align*}
whence $\rho(\ch_{s(F)})=\ch_{s(E)}$. Similarly, $\rho(\ch_{r(F)})=\ch_{r(E)}$.  Given
any compact open bisection $F\subseteq\mathcal{K}G$,  we can write
$F=\bigsqcup_{k=1}^{K}F_k$ with $F_k=E_k\times \{(i_k, j_k)\}$, and we get
\begin{align*}
\rho(\ch_{s(F)})&=\rho\bigg(\sum_k\ch_{s(F_k)}\bigg)=\sum_k\rho(\ch_{s(F_k)})=\sum_k\ch_{s(E_k)}\sim \sum_k\ch_{r(E_k)}\\&=\sum_k\rho(\ch_{r(F_k)})=\rho\bigg(\sum_k\ch_{r(F_k)}\bigg)=\rho(\ch_{r(F)}).
\end{align*}
Since $\rho$ is additive  we conclude that if $h \sim g$ in $C_c(\mathcal{K}\Gunit,\mathbb{Z})^+$ then $\rho(h) \sim \rho(g)$ in $C_c(G^{(0)}, \mathbb{Z})^+$. Thus $\rho$ descends to a monoid homomorphism $\tilde{\rho}: S(\mathcal{K}G) \to S(G)$ given by $\tilde{\rho}([h]_{\mathcal{K}G})=[\rho(h)]_G$. It is quite clear that $\rho(f\times\ch_{(0,0)})=f$, so $\tilde{\rho}$ is the desired inverse of $\varphi$.
\end{proof}

\begin{corollary}
Let $G$ and $H$ be \'{e}tale, ample groupoids with $\sigma$-compact unit spaces. If $G$ and $H$ are groupoid equivalent, then $S(G) \cong S(H)$.
\end{corollary}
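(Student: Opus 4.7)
The plan is to deduce this from three ingredients already at hand: Proposition~\ref{isomorphisminvariant} (type semigroups are isomorphism invariants), Proposition~\ref{prp:stable equiv} (the type semigroup is invariant under stabilization by the equivalence relation $\mathcal{R} = \mathbb{N}\times\mathbb{N}$), and the groupoid analogue \cite{CRS} of the Brown--Green--Rieffel theorem \cite{BGR}. The BGR-type statement applies in the $\sigma$-compact setting and says precisely that two étale groupoids with $\sigma$-compact unit spaces are equivalent if and only if their stabilizations $\mathcal{K}G = G \times \mathcal{R}$ and $\mathcal{K}H = H \times \mathcal{R}$ are isomorphic as topological groupoids.

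First I would observe that if $G$ is étale and ample then so is $\mathcal{K}G$: the product of an ample groupoid with the discrete principal groupoid $\mathcal{R}$ inherits a basis of compact open bisections of the form $E \times \{(i,j)\}$, and the unit space $\Gunit \times \mathbb{N}$ is locally compact Hausdorff and totally disconnected. Hence, assuming $G$ and $H$ are equivalent, \cite{CRS} provides an isomorphism $\varphi: \mathcal{K}G \to \mathcal{K}H$ of ample topological groupoids.

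Next, Proposition~\ref{isomorphisminvariant} applied to $\varphi$ yields a monoid isomorphism $S(\mathcal{K}G) \cong S(\mathcal{K}H)$. Proposition~\ref{prp:stable equiv}(ii) provides monoid isomorphisms $S(G) \cong S(\mathcal{K}G)$ and $S(H) \cong S(\mathcal{K}H)$. Composing these three isomorphisms gives $S(G) \cong S(H)$, which is the desired conclusion.

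The only delicate point is verifying that the hypotheses of the groupoid BGR theorem \cite{CRS} are met in our exact framework: namely, that $\sigma$-compactness of $\Gunit$ and $\Hunit$ (together with our blanket assumption that groupoids are locally compact, second countable, Hausdorff, and here also étale and ample) is sufficient to conclude that equivalence of $G$ and $H$ forces $\mathcal{K}G \cong \mathcal{K}H$ as \emph{ample} topological groupoids. Once this citation is invoked, the rest of the argument is simply a chain of three invocations of results already established in the excerpt, and no further calculation is required.
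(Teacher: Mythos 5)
Your proposal is correct and follows exactly the paper's argument: invoke \cite[Theorem~2.1]{CRS} to get $\mathcal{K}G \cong \mathcal{K}H$ from the equivalence, then apply Proposition~\ref{isomorphisminvariant} to these stabilizations and Proposition~\ref{prp:stable equiv} to identify $S(G)$ with $S(\mathcal{K}G)$ and $S(H)$ with $S(\mathcal{K}H)$. The extra care you take in checking that $\mathcal{K}G$ is ample is already handled in the paper's discussion preceding Proposition~\ref{prp:stable equiv}, so nothing further is needed.
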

\begin{proof}
We know that $S(G) \cong S(\mathcal{K}G)$ and $S(H) \cong S(\mathcal{K}H)$ by Proposition~\ref{prp:stable equiv}. Since $G$ and $H$ are equivalent, \cite[Theorem~2.1]{CRS} shows that $\mathcal{K}G\cong \mathcal{K}H$, so the result follows from Proposition~\ref{isomorphisminvariant}.
\end{proof}

\section{Stably finite groupoid \texorpdfstring{\cstar}{C*}-algebras}\label{sec:stabfin}

This section is concerned with characterizing stably finite \cstar-algebras that arise
from ample groupoids by the non-paradoxical nature of its type semigroup and by a
K-theoretic coboundary property analogous to that seen in the work of N.\
Brown~\cite{BrownAFE}. Our unified statements (Theorem~\ref{theoremsfcnp} and
Corollary~\ref{corollarysfcnp}) are the main results of this section and include, as
promised, a converse to Proposition~\ref{sfimpliescnp}.

We first look at how the type semigroup $S(G)$ of an ample groupoid $G$ reflects the
notion of paradoxicality introduced in Definition~\ref{defnparadox}.

\begin{lemma}\label{lemmaparadoxtype}
Let $G$ be an \'{e}tale and ample groupoid, and let $A\subseteq \Gunit$ be a non-empty
compact open subset. Writing $\theta=[\ch_A]_G$ in $S(G)$,  $A$ is $(k,l)$-paradoxical if
and only if $k\theta\leq l\theta$ in $S(G)$.
\end{lemma}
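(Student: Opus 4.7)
The plan is to unwind both sides of the equivalence directly against the definitions, using two elementary features of the type semigroup already established in the excerpt: (a) if $f\le g$ pointwise in $C_c(\Gunit,\mathbb{Z})$, then $[f]_G\le [g]_G$ in $S(G)$; and (b) by the very definition of $\sim_G$, for any compact open bisections $E_1,\dots,E_n$ one has $\bigl[\sum_i \ch_{s(E_i)}\bigr]_G=\bigl[\sum_i \ch_{r(E_i)}\bigr]_G$.

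For the forward implication, I would take compact open bisections $E_1,\dots,E_n$ witnessing $(k,l)$-paradoxicality, so that $k\ch_A\le \sum_i\ch_{s(E_i)}$ and $\sum_i\ch_{r(E_i)}\le l\ch_A$ pointwise. Applying (a) and (b) one obtains in $S(G)$ the chain
\[
k\theta=[k\ch_A]_G\le \Bigl[\sum_i\ch_{s(E_i)}\Bigr]_G=\Bigl[\sum_i\ch_{r(E_i)}\Bigr]_G\le [l\ch_A]_G=l\theta,
\]
where I use that $[k\ch_A]_G=k[\ch_A]_G=k\theta$ and similarly for $l$, which follows from additivity of $\pi$ on $C_c(\Gunit,\mathbb{Z})^+$.

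For the converse, the hypothesis $k\theta\le l\theta$ unpacks via the algebraic ordering to: there exists $h\in C_c(\Gunit,\mathbb{Z})^+$ with $[k\ch_A+h]_G=[l\ch_A]_G$, i.e.\ $k\ch_A+h\sim_G l\ch_A$. By the definition of $\sim_G$, there are compact open bisections $F_1,\dots,F_m\in\C$ such that
\[
k\ch_A+h=\sum_{j=1}^m\ch_{s(F_j)}\quad\text{and}\quad l\ch_A=\sum_{j=1}^m\ch_{r(F_j)}.
\]
Since $h\ge 0$ pointwise, the first equation yields $k\ch_A\le \sum_j \ch_{s(F_j)}$, while the second equation gives $\sum_j\ch_{r(F_j)}\le l\ch_A$ (in fact with equality). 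Thus the $F_j$ witness $(k,l)$-paradoxicality of $A$.

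There is no real obstacle: the argument is essentially a translation between the definition of the equivalence relation $\sim_G$ (with its bisection data) and the algebraic order on the monoid $S(G)$. The only small point to be careful about is noting that the pointwise inequality on the unit space promotes to the algebraic order in $S(G)$ (the slack $g-f$ is itself a nonnegative integer-valued function, hence represents an element of $S(G)$), which is recorded just before the statement of the lemma.
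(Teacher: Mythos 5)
Your proof is correct and follows essentially the same route as the paper's: the forward direction passes the pointwise inequalities through the quotient map using the invariance $\bigl[\sum_i\ch_{s(E_i)}\bigr]_G=\bigl[\sum_i\ch_{r(E_i)}\bigr]_G$, and the converse unpacks the algebraic order to an explicit $\sim_G$-equivalence $k\ch_A+h\sim_G l\ch_A$ whose witnessing bisections directly verify the $(k,l)$-paradoxicality inequalities. No gaps.
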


\begin{proof}
Suppose $A$ is $(k,l)$-paradoxical and that $E_1,\dots, E_n$ are bisections in $\C$
satisfying~\ref{relation}. Then
\[k\theta=k[\ch_A]_G=[k\ch_A]_G\leq\left[\sum_{i=1}^{n}\ch_{s(E_i)}\right]_G=\left[\sum_{i=1}^{n}\ch_{r(E_i)}\right]_G\leq[l\ch_A]_G=l[\ch_A]_G=l\theta.\]

Conversely, suppose $k\theta\leq l\theta$. Then there is an $f\in C(\Gunit,\mathbb{Z})^+$
such that
\[ [k\ch_A+f]_G=[k\ch_A]_G+[f]_G=k[\ch_A]_G+[f]_G=l[\ch_A]_G=[l\ch_A]_G.\]
This means that there are compact open bisections $E_1,\dots, E_n$ with
\[k\ch_A+f=\sum_{i=1}^{n}\ch_{s(E_i)},\quad\text{and}\quad \sum_{i=1}^{n}\ch_{r(E_i)}=l\ch_A.\]
It follows that $A$ is $(k,l)$-paradoxical since $k\ch_A\leq k\ch_A+f$  and
so~\ref{klparadox} is satisfied.
\end{proof}

Before moving forward we recall some terminology for pre-ordered abelian monoids $(S,+)$.
For positive integers $k>l>0$, we say that an element $\theta\in S$ is
\emph{$(k,l)$-paradoxical} provided that $k\theta\leq l\theta$. If $\theta$ fails to be
$(k,l)$-paradoxical for all pairs of integers $k>l>0$, call $\theta$ \emph{completely
non-paradoxical}. Note that $\theta$ is completely non-paradoxical if and only if
$(n+1)\theta\nleq n\theta$ for all $n\in\mathbb{N}$. If every element in $S$ is
completely non-paradoxical we say that $S$ is completely non-paradoxical. The above lemma
basically states that in its setting, an subset $A\subseteq G^{(0)}$ is completely
non-paradoxical precisely when $[\ch_A]_G$  is completely non-paradoxical in the
pre-ordered abelian monoid $S(G)$, and $G$ is completely non-paradoxical if and only if
$S(G)$ is completely non-paradoxical. A \emph{state} on $S$ is a map
$\nu:S\rightarrow[0,\infty]$ which is additive, respects the pre-ordering $\leq$, and
satisfies $\nu(0)=0$. If a state assumes a value other than $0$ or $\infty$,  it said to
be \emph{non-trivial}.

The following result is a main ingredient in the proof of Tarski's theorem. It is a
Hahn-Banach type extension result and is essential in establishing a converse to
Proposition~\ref{sfimpliescnp}. A proof can be found in~\cite{Wa}.

\begin{theorem}\label{Tarski} Let $(S,+)$ be an abelian monoid equipped with the algebraic ordering, and let $\theta$ be an element of $S$. Then the following are equivalent:
\begin{enumerate}
\item[(i)] $(n+1)\theta\nleq n\theta$ for all $n\in\mathbb{N}$, that is $\theta$ is
    completely non-paradoxical.
\item[(ii)] There is a non-trivial state $\nu: S\rightarrow[0,\infty]$ with
    $\nu(\theta)=1$.
\end{enumerate}
\end{theorem}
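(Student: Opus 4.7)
The direction (ii) $\Rightarrow$ (i) is immediate: for any state $\nu$ with $\nu(\theta)=1$, additivity gives $\nu(n\theta)=n$ for all $n$, and then $(n+1)\theta\leq n\theta$ would force $n+1\leq n$ in $[0,\infty]$, which is absurd.

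For (i) $\Rightarrow$ (ii), I would follow the classical Hahn--Banach-style argument for abelian monoids (see \cite{Wa}). The idea is to introduce two sandwich functionals
\begin{align*}
q(s) &:= \sup\bigl\{m/n : m,n\in\mathbb{N}_{>0},\ m\theta\leq ns\bigr\}, \\
p(s) &:= \inf\bigl\{m/n : m,n\in\mathbb{N}_{>0},\ ns\leq m\theta\bigr\},
\end{align*}
with the conventions $\sup\emptyset = 0$ and $\inf\emptyset = \infty$. Any state $\nu$ on $S$ with $\nu(\theta)=1$ is automatically squeezed as $q\leq\nu\leq p$, so the task reduces to producing such a sandwiched additive map. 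Hypothesis (i) enters to ensure the sandwich is consistent: if simultaneously $m\theta\leq ns$ and $n's\leq m'\theta$, then $mn'\theta\leq nm'\theta$, and complete non-paradoxicality of $\theta$ rules out $mn'>nm'$ (else some $(k+1)\theta\leq k\theta$ would follow by adding a suitable multiple of $\theta$ to both sides). Thus $q\leq p$ pointwise, and the same reasoning applied to $\theta$ itself yields $q(\theta)=p(\theta)=1$. Routine manipulations show $p$ is monotone and subadditive, $q$ is monotone and superadditive, and both scale by positive integers in the expected way.

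The final and most delicate step is a Zorn's lemma argument producing a state $\nu:S\to[0,\infty]$ with $q\leq\nu\leq p$ (and hence $\nu(\theta)=1$, which also makes $\nu$ non-trivial). Consider the poset of pairs $(T,\nu_T)$, where $T\subseteq S$ is a submonoid containing $\theta$ and $\nu_T:T\to[0,\infty]$ is additive, monotone, sandwiched between $q|_T$ and $p|_T$, and satisfies $\nu_T(\theta)=1$; start from $T=\mathbb{N}\theta$ with $\nu_T(k\theta):=k$. Chains admit obvious upper bounds by union. The heart of the proof---and the main obstacle---is the one-step extension: given a maximal $(T_*,\nu_*)$ and an element $s_0\notin T_*$, one must locate a value $\nu_*(s_0)\in[q(s_0),p(s_0)]$ that extends $\nu_*$ additively and monotonically to the submonoid $T_*+\mathbb{N} s_0$ while preserving the sandwich. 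This is the monoid analog of the classical one-step Hahn--Banach extension; the admissible interval is non-empty because subadditivity of $p$, superadditivity of $q$, and hypothesis (i) together force the two-sided bounds arising from relations $a+ks_0\leq b+\ell s_0$ (with $a,b\in T_*$) to be compatible. Maximality then forces $T_*=S$, and $\nu_*$ is the required state.
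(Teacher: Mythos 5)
The paper does not actually prove this statement: it is quoted as a known theorem of Tarski, with the proof deferred to Wagon's book \cite{Wa}, so the only meaningful comparison is with that classical argument. Your direction (ii)$\Rightarrow$(i) is correct, and so is your preliminary analysis of the sandwich functionals: the verifications that $p$ is subadditive, $q$ is superadditive, that hypothesis (i) forces $q\le p$ and $q(\theta)=p(\theta)=1$, and that any normalized state is squeezed between $q$ and $p$, are all exactly right.

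The gap is the step you yourself flag as the main obstacle: the one-step extension in the Zorn argument is not proved, and in the form you state it, it is false. In a general abelian monoid there is no cancellation, so $\nu(a+ks_0)=\nu_*(a)+k\alpha$ need not be well defined: a single element of $T_*+\mathbb{N}s_0$ may admit representations $a+ks_0=b+\ell s_0$ with $k\neq\ell$, each imposing its own linear constraint on $\alpha$ (for instance $2s_0=s_0$ forces $\alpha\in\{0,\infty\}$), so it is not true that every value in $[q(s_0),p(s_0)]$ is admissible. Likewise, order relations $a+ks_0\le b+\ell s_0$ with $k>\ell$ cannot be rearranged to isolate $s_0$ on one side of the inequality, so the standard ordered-group computation (``$\sup$ of lower bounds $\le\inf$ of upper bounds,'' as in the extension lemma for states on ordered abelian groups with order unit) does not apply; subadditivity of $p$ and superadditivity of $q$ alone do not show that these simultaneous two-sided constraints are jointly satisfiable. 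Showing that hypothesis (i) always rescues this is essentially the whole content of the theorem, and it is precisely why Tarski's result is not a routine Hahn--Banach exercise. The proof in \cite{Wa} avoids transfinite one-step extension altogether: it reduces, by a compactness argument on the product space $[0,\infty]^{S}$ (equivalently, the compactness theorem of propositional logic), to satisfying finitely many additivity and normalization constraints at a time, and disposes of the resulting finite systems by a combinatorial argument in which (i) is what rules out inconsistency. To complete your plan you would need either to reproduce that finite-approximation argument or to invoke a genuine sandwich/interpolation theorem for sub- and superadditive functions on abelian semigroups (such results exist, but their proofs are themselves nontrivial and require care with the value $\infty$); as written, the central step is asserted rather than established.
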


For the next lemma we need the notion of an invariant state. If $G$ is an \'{e}tale,
ample groupoid, we will call a state $\beta: C_c(\Gunit,\mathbb{Z})\rightarrow\mathbb{R}$
\emph{invariant} if $\beta(\ch_{s(E)})=\beta(\ch_{r(E)})$ for any compact open bisection
$E$.

\begin{lemma}\label{lemmacnp} Let $G$ be an \'{e}tale and ample groupoid with compact unit space $\Gunit$. Consider the following properties.
\begin{enumerate}
\item[(i)] For every non-empty closed and open $U\subseteq\Gunit$, there is a faithful invariant positive group homomorphism $\beta:C(\Gunit,\mathbb{Z})\rightarrow\mathbb{R}$ with $\beta(\ch_{U})=1$.
\item[(ii)] There is a faithful invariant state $\beta$ on the dimension group
    $$(C(\Gunit,\mathbb{Z}), C(\Gunit,\mathbb{Z})^+, \ch_{\Gunit}).$$
\item[(iii)] $G$ is completely non-paradoxical.
\end{enumerate}
The implications $(i)\Rightarrow(ii)\Rightarrow(iii)$ always hold. If $G$ is minimal then $(iii)\Rightarrow(i)$ whence all conditions are equivalent.
\end{lemma}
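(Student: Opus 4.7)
The plan is to prove $(i)\Rightarrow(ii)\Rightarrow(iii)$ unconditionally and $(iii)\Rightarrow(i)$ under the minimality hypothesis. The first implication is immediate: specialise (i) to $U=\Gunit$ to obtain a faithful invariant positive homomorphism normalised at $\ch_{\Gunit}$, which is by definition a faithful invariant state on the dimension group. For $(ii)\Rightarrow(iii)$, assume for contradiction that some non-empty compact open $A\subseteq\Gunit$ is $(k,l)$-paradoxical, witnessed by $E_1,\dots,E_n\in\C$ as in~\ref{klparadox}. Applying the state $\beta$, using positivity, the invariance identities $\beta(\ch_{s(E_i)})=\beta(\ch_{r(E_i)})$, and faithfulness (giving $\beta(\ch_A)>0$ since $A\neq\emptyset$) yields $k\beta(\ch_A)\leq l\beta(\ch_A)$, forcing the contradiction $k\leq l$.

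For $(iii)\Rightarrow(i)$, fix a non-empty clopen $U\subseteq\Gunit$ and set $\theta:=[\ch_U]_G\in S(G)$. By Lemma~\ref{lemmaparadoxtype}, (iii) implies $\theta$ is completely non-paradoxical in $S(G)$, so Tarski's theorem (Theorem~\ref{Tarski}) supplies a non-trivial state $\nu:S(G)\to[0,\infty]$ with $\nu(\theta)=1$. Define $\tilde\beta:C(\Gunit,\mathbb{Z})^+\to[0,\infty]$ by $\tilde\beta(f):=\nu([f]_G)$; this is additive, order-preserving, and automatically invariant because $[\ch_{s(E)}]_G=[\ch_{r(E)}]_G$ for every $E\in\C$. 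The desired homomorphism $\beta$ is then obtained by extending $\tilde\beta$ to $C(\Gunit,\mathbb{Z})$ via $\beta(f-g):=\tilde\beta(f)-\tilde\beta(g)$, which is well-defined by additivity.

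The heart of the argument, and its main obstacle, is showing $\tilde\beta$ is both finite and faithful, and this is where minimality enters. For finiteness, Proposition~\ref{min2}(ii) furnishes $E_1,\dots,E_n\in\C$ with $\bigcup_jU_{E_j}=\Gunit$; replacing $E_j$ by $F_j:=E_j\cap s^{-1}(U)$ gives $s(F_j)\subseteq U$ and $r(F_j)=U_{E_j}$, whence $\ch_{\Gunit}\leq\sum_j\ch_{r(F_j)}\sim_G\sum_j\ch_{s(F_j)}\leq n\ch_U$, so $[\ch_{\Gunit}]_G\leq n\theta$ and $\tilde\beta(\ch_{\Gunit})\leq n$; since every $f\in C(\Gunit,\mathbb{Z})^+$ is dominated by a multiple of $\ch_{\Gunit}$, $\tilde\beta$ is finite everywhere. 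For faithfulness, given non-zero $f=\sum_km_k\ch_{U_k}\in C(\Gunit,\mathbb{Z})^+$ with $m_k>0$ and clopen $U_k$, Proposition~\ref{min2}(iii) produces $E'_1,\dots,E'_m\in\C$ with $\sum_jE'_jf\geq\ch_{\Gunit}$; Facts~\ref{factlinear} and~\ref{factchar} give $E'_jf=\sum_km_k\ch_{r(E'_jU_k)}$, and the bisections $E'_j\cap s^{-1}(U_k)$ identify $[\ch_{r(E'_jU_k)}]_G=[\ch_{s(E'_j)\cap U_k}]_G\leq[\ch_{U_k}]_G$, yielding $[E'_jf]_G\leq[f]_G$ and hence $[\ch_{\Gunit}]_G\leq m[f]_G$; combined with $\theta\leq[\ch_{\Gunit}]_G$ this gives $\tilde\beta(f)\geq 1/m>0$. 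The main difficulty is precisely this double bookkeeping: translating the topological content of Proposition~\ref{min2}(ii)--(iii) into the two comparisons $[\ch_{\Gunit}]_G\leq n\theta$ and $[\ch_{\Gunit}]_G\leq m[f]_G$ in $S(G)$ that simultaneously render the pulled-back state finite and faithful.
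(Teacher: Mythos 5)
Your proposal is correct and follows essentially the same route as the paper: pull back a Tarski state on $S(G)$ along the quotient map, then use Proposition~\ref{min2} (minimality) to dominate $\ch_{\Gunit}$ by translates of $U$ (for finiteness) and by translates of $f$ (for faithfulness). The only cosmetic difference is that you phrase faithfulness as a direct lower bound $\tilde\beta(f)\geq 1/m$ in $S(G)$ where the paper argues by contradiction from $\beta(\ch_A)=0$; the underlying mechanism is identical.
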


\begin{proof}
$(i)\Rightarrow(ii)$: Simply take $U=\Gunit$.

$(ii)\Rightarrow(iii)$: Suppose a compact open subset $A\subseteq\Gunit$ is
$(k,l)$-paradoxical for a pair of positive integers $k>l>0$. There are then $E_1,\dots,
E_n\in\C$ with
\[k\ch_A\leq\sum_{j=1}^{n}\ch_{s(E_j)},\quad\text{and}\quad \sum_{j=1}^{n}\ch_{r(E_j)}\leq l\ch_{A}.\]
Applying $\beta$ one gets
\begin{align*}
k\beta(\ch_{A})&=\beta(k\ch_A)\leq\beta\bigg(\sum_{j=1}^{n}\ch_{s(E_j)}\bigg)=\sum_{j=1}^{n}\beta(\ch_{s(E_j)})=\sum_{j=1}^{n}\beta(\ch_{r(E_j)})
=\beta\bigg(\sum_{j=1}^{n}\ch_{r(E_j)}\bigg)\\&\leq \beta(l\ch_A)=l\beta(\ch_{A}).
\end{align*}
If $A$ is non-empty then $\ch_{A}\neq0$ and since $\beta$ is faithful we may divide by
$\beta(\ch_{A})>0$ to yield $k\leq l$, which is contradictory. Therefore $A=\emptyset$
and $G$ is completely non-paradoxical.

Assuming $G$ is minimal we prove $(iii)\Rightarrow(i)$. Let
$U\subseteq\Gunit$ be a non-empty compact open set, and consider $\theta=[\ch_{U}]_G$ in
$S(G)$. Since $G$ is completely non-paradoxical $\theta$ is completely non-paradoxical
and therefore Theorem~\ref{Tarski} provides a non-trivial state $\nu: S(G)\rightarrow
[0,\infty]$ with $\nu(\theta)=1$. Composing this state with the quotient mapping $\pi:
C(\Gunit,\mathbb{Z})^+\rightarrow S(G)$ yields
\[\beta:=\nu\pi: C(\Gunit,\mathbb{Z})^+\longrightarrow[0,\infty],\]
an additive, order-preserving map with $\beta(0)=0$ and $\beta(\ch_{U})=1$. By
construction $\beta$ is invariant; indeed if $E$ is a compact open bisection in $G$ then
\[\beta(\ch_{s(E)})=\nu\pi(\ch_{s(E)})=\nu([\ch_{s(E)}]_G)=\nu([\ch_{r(E)}]_G)=\nu\pi(\ch_{r(E)})=\beta(\ch_{r(E)}).\]

We claim that $\beta$ is in fact finite. Since $G$ is minimal, there are compact open
bisections $E_1,\dots, E_n$ with
\[\ch_{\Gunit}\leq\sum_{j=1}^{n}E_j\ch_{U}=\sum_{j=1}^{n}\ch_{r(E_jU)}.\]
Note that for a bisection $E\in\C$ we have $s(EU)\subseteq U$ and so $\ch_{s(EU)}\leq
\ch_{U}$. Using this fact and applying $\beta$ to the above inequality gives
\[\beta(\ch_{\Gunit})\leq\beta\bigg(\sum_{j=1}^{n}\ch_{r(E_jU)}\bigg)=\sum_{j=1}^{n}\beta(\ch_{r(E_jU)})=\sum_{j=1}^{n}\beta(\ch_{s(E_jU)})\leq \sum_{j=1}^{n}\beta(\ch_{U})=n<\infty,\]
where we have used the invariance and order-preserving properties of $\beta$. If $f\in
C(\Gunit,\mathbb{Z})^+$, write $f=\sum_{k=1}^{K}m_k\ch_{A_k}$, where $m_k\in\mathbb{Z}^+$
and the $A_k$ are closed and open subsets of \Gunit. Since $\ch_{A_k}\leq\ch_{\Gunit}$ we
have
\[\beta(f)=\beta\bigg(\sum_{k=1}^{K}m_k\ch_{A_k}\bigg)=\sum_{k=1}^{K}m_k\beta(\ch_{A_k})\leq \beta(\ch_{\Gunit})\sum_{k=1}^{K}m_k\]
which is finite.

Now we show that $\beta$ is faithful. Suppose that $A\subseteq\Gunit$ is a non-empty closed
and open subset with $\beta(\ch_A)=0$. Again we use minimality to find compact open
bisections $E_1,\dots, E_n$ satisfying
\[\ch_{\Gunit}\leq\sum_{j=1}^{n}E_j\ch_{A}=\sum_{j=1}^{n}\ch_{r(E_jA)}.\]
Applying $\beta$ and using its properties we get
\begin{align*}1=\beta(\ch_U)\leq \beta(\ch_{\Gunit})&\leq \beta\bigg(\sum_{j=1}^{n}\ch_{r(E_jA)}\bigg)=\sum_{j=1}^{n}\beta(\ch_{r(E_jA)})=\sum_{j=1}^{n}\beta(\ch_{s(E_jA)})\\&\leq \sum_{j=1}^{n}\beta(\ch_A)=0
\end{align*}
which is absurd. Therefore $\beta(\ch_A)>0$ for every non-empty closed and open $A\subseteq\Gunit$.
Now suppose $f\in C(\Gunit,\mathbb{Z})^+$ is non-zero. We may write
$f=\sum_{k=1}^{K}m_k\ch_{A_k}$ where each $A_k\subseteq\Gunit$ is non-empty, closed and open, and
every $m_k$ is a strictly positive integer. Since $\beta(\ch_{A_k})>0$ for every $k$, it easily follows that $\beta(f)>0$ as well.

To complete the proof we need only extend $\beta$ to all of $C(\Gunit,\mathbb{Z})$; but
this is easily done by expressing $f\in C(\Gunit,\mathbb{Z})$ as the difference of its
positive and negative parts $f=f^+-f^-$, $f^+,f^-\in C(\Gunit,\mathbb{Z})^+$ and applying
$\beta$ additively.
\end{proof}

With lemma~\ref{lemmacnp} at our disposal we can characterize stably finite \cstar-algebras that arise from \'{e}tale ample groupoids.  Before we delve into the details, it is natural to tie stable finiteness with the idea of a coboundary subgroup.

\begin{definition} Let $G$ be an \'{e}tale, ample groupoid and let $\C$ denote the family of all compact open bisections in $G$. We define the \emph{coboundary subgroup of $G$} as the subgroup of $C_c(\Gunit,\mathbb{Z})$ generated by differences $\ch_{s(E)}-\ch_{r(E)}$, $E\in\C$, that is
\[H_G:=\big\langle \ch_{s(E)}-\ch_{r(E)}\ |\ E\in\C\big\rangle.\]
We say that $G$ satisfies the \emph{coboundary condition} if $H_G\cap
C_c(\Gunit,\mathbb{Z})^+=\{0\}$.
\end{definition}

Here is the main result of this section.

\begin{theorem}\label{theoremsfcnp}
Let $G$ be an \'{e}tale groupoid with compact unit space \Gunit.
Consider the following properties.
\begin{enumerate}
\item[(i)] The \cstar-algebra $\cstar_{r}(G)$ admits a faithful tracial state.
\item[(ii)] The \cstar-algebra $\cstar_{r}(G)$ is stably finite.
\item[(iii)] $G$ satisfies the coboundary condition.
\item[(iv)] $G$ is completely non-paradoxical.
\end{enumerate}

The implications $(i)\Rightarrow(ii)\Rightarrow(iii)\Rightarrow(iv)$ always hold. If $G$
is minimal, then $(iv)\Rightarrow(i)$ and all properties are equivalent.

If $G$ is minimal and amenable, then properties $(i)$ through $(iv)$ are all equivalent to
\begin{enumerate}
\item[(v)] The \cstar-algebra $\cstar_{r}(G)$ is quasidiagonal.
\end{enumerate}
\end{theorem}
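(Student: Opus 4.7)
The plan is to prove the theorem by going around the cycle $(i)\Rightarrow(ii)\Rightarrow(iii)\Rightarrow(iv)\Rightarrow(i)$, together with the separate argument comparing $(v)$ with $(ii)$ in the amenable case.

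For $(i)\Rightarrow(ii)$ I would appeal to the standard fact that a faithful tracial state on a unital \cstar-algebra $A$ extends entrywise to $M_n(A)$ and detects the equivalence class of projections, so no projection in any matrix amplification is equivalent to a proper subprojection. For $(ii)\Rightarrow(iii)$ I would use the observation from Section~\ref{Kth&traces} that, since $\iota : C(\Gunit) \hookrightarrow \cstar_r(G)$ is an embedding into a stably finite \cstar-algebra, the induced $\Ko(\iota):C(\Gunit,\mathbb{Z})\to\Ko(\cstar_r(G))$ is faithful. Any generator $\ch_{s(E)}-\ch_{r(E)}$ of $H_G$ maps to $0$ in $\Ko(\cstar_r(G))$ because $\ch_E$ implements a Murray--von Neumann equivalence $\iota(\ch_{s(E)})\sim\iota(\ch_{r(E)})$; hence $\Ko(\iota)$ vanishes on $H_G$, and faithfulness forces $H_G\cap C_c(\Gunit,\mathbb{Z})^+=\{0\}$. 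For $(iii)\Rightarrow(iv)$, if some compact open $A\subseteq\Gunit$ is $(k,l)$-paradoxical via $E_1,\dots,E_n\in\C$, then $\sum_i(\ch_{s(E_i)}-\ch_{r(E_i)})$ lies in $H_G$ and is bounded below by $(k-l)\ch_A$, so it is a non-zero element of $H_G\cap C_c(\Gunit,\mathbb{Z})^+$, contradicting the coboundary condition.

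The substantive implication is $(iv)\Rightarrow(i)$ under minimality, which I would deduce from Lemma~\ref{lemmacnp}. That lemma, combined with minimality and complete non-paradoxicality, yields a faithful invariant positive group homomorphism $\beta:C(\Gunit,\mathbb{Z})\to\mathbb{R}$ with $\beta(\ch_{\Gunit})=1$. The plan is to upgrade $\beta$ to a faithful $G$-invariant Borel probability measure $\mu$ on $\Gunit$ as follows. First extend $\beta$ to $C(\Gunit,\mathbb{Q})$ by $\mathbb{Q}$-linearity. Since $\Gunit$ is compact and totally disconnected, $C(\Gunit,\mathbb{Q})$ (as locally constant $\mathbb{Q}$-valued functions) is uniformly dense in $C(\Gunit,\mathbb{R})$, and $\beta$ is norm-bounded on it because $|\beta(f)|\leq \|f\|_\infty\beta(\ch_{\Gunit})=\|f\|_\infty$, so $\beta$ extends continuously to a positive linear functional of norm $1$ on $C(\Gunit,\mathbb{R})$. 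Riesz representation then produces a regular Borel probability measure $\mu$ on $\Gunit$ with $\mu(V)=\beta(\ch_V)$ for every compact open $V$. Invariance of $\beta$ on characteristic functions of compact open bisection source/range sets transfers, via regularity and inner approximation by compact open sets, to the invariance condition $\mu(s(E))=\mu(r(E))$ for every open bisection $E$. Faithfulness of $\beta$ plus the fact that compact open sets form a base for the topology shows that $\mu$ has full support. Finally I invoke the construction from Section~\ref{Kth&traces}: the state $\tau_\mu=I_\mu\circ\mathbb{E}$ is a tracial state on $\cstar_r(G)$ because $\mu$ is $G$-invariant, and it is faithful because $\mathbb{E}$ is faithful and $\mu$ has full support.

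For the final assertion, I would handle the minimal amenable case by proving $(v)\Leftrightarrow(ii)$ and quoting the chain already established. The implication $(v)\Rightarrow(ii)$ is the elementary fact that quasidiagonal \cstar-algebras are stably finite (the quasidiagonalising matrix units witness finiteness of every projection). For $(ii)\Rightarrow(v)$ I would use that amenability of $G$ together with our standing second-countability assumption makes $\cstar_r(G)=\cstar(G)$ separable, nuclear, and UCT (by Tu's theorem), so by the equivalence $(ii)\Leftrightarrow(i)$ just established there is a faithful tracial state, and the Tikuisis--White--Winter theorem then guarantees that this trace is quasidiagonal, in particular that $\cstar_r(G)$ itself is quasidiagonal.

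I expect the main obstacle to be the extension step in $(iv)\Rightarrow(i)$: verifying that the invariant state $\beta$ constructed by Hahn--Banach on $S(G)$, once pulled back to $C(\Gunit,\mathbb{Z})$ and extended, actually produces a Borel measure that is $G$-invariant for \emph{all} open bisections (not just compact open ones) and has full support. The rest is either formal, a direct reduction to Lemma~\ref{lemmacnp}, or a citation of standard structural results.
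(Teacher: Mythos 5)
Your proposal is correct and follows essentially the same route as the paper: the chain $(i)\Rightarrow(ii)\Rightarrow(iii)\Rightarrow(iv)$ is argued identically (faithful trace gives stable finiteness, stable finiteness gives faithfulness of $\Ko(\iota)$ which kills $H_G\cap C(\Gunit,\mathbb{Z})^+$, and a $(k,l)$-paradoxical set would produce a positive nonzero coboundary), and $(iv)\Rightarrow(i)$ rests on Lemma~\ref{lemmacnp} exactly as in the paper, with the amenable case handled by Tikuisis--White--Winter in both. The only immaterial difference is that you convert the faithful invariant state $\beta$ into a probability measure directly via density of locally constant functions and the Riesz representation theorem, whereas the paper routes through the statement that states on $\Ko(C(\Gunit))$ arise from traces; both arguments also share the same small glossed-over point --- that $G$-invariance of $\mu$ must be upgraded from compact open bisections to all open bisections --- which you correctly flag and which is easily settled since in an ample second-countable groupoid every open bisection is a countable disjoint union of compact open bisections.
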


\begin{proof}
$(i)\Rightarrow(ii)$: Any unital \cstar-algebra that admits a faithful tracial state is stably finite.

$(ii)\Rightarrow(iii)$: Applying the \Ko-functor to the canonical inclusion $\iota:
C(\Gunit)\hookrightarrow\cstar_r(G)$ gives a positive group homomorphism
\[\Ko(\iota):C(\Gunit,\mathbb{Z})\cong\Ko(C(\Gunit))\longrightarrow\Ko(\cstar_r(G)).\]
The fact that $\cstar_r(G)$ is stably finite ensures that $\Ko(\iota)$ is faithful (see
section~\ref{Kth&traces}), that is, its kernel contains no non-zero positive elements.
However, we know that $\Ko(\iota)(\ch_s(E))=\Ko(\iota)(\ch_r(E))$ for every compact open
bisection $E$ (once more see section~\ref{Kth&traces}), therefore
$H_G\subseteq\ker(\Ko(\iota))$. It now follows that $H_G\cap C(\Gunit,\mathbb{Z})^+=\{0\}$.

$(iii)\Rightarrow(iv)$: Assume, by way of contradiction, that $G$ is not completely
non-paradoxical so that we can find a non-empty closed and open $A\subseteq\Gunit$,
positive integers $k>l>0$ and compact open bisections $E_1,\dots, E_n\in\C$ satisfying
\[k\ch_{A}\leq\sum_{i=1}^{n}\ch_{s(E_i)},\quad\text{and}\quad \sum_{i=1}^{n}\ch_{r(E_i)}\leq l\ch_{A}.\]
Then
\[0<(k-l)\ch_A=k\ch_A-l\ch_A\leq\sum_{i=1}^{n}\ch_{s(E_i)}-\sum_{i=1}^{n}\ch_{r(E_i)}=\sum_{i=1}^{n}\left(\ch_{s(E_i)}-\ch_{r(E_i)}\right)\]
which certainly belongs to $H_G\cap C(\Gunit,\mathbb{Z})^+$, a contradiction. Thus $G$ is
completely non-paradoxical.

Assuming $G$ is minimal and completely non-paradoxical we prove $(iv)\Rightarrow(i)$. By
Lemma~\ref{lemmacnp} there is an invariant faithful state $\beta:
C(\Gunit,\mathbb{Z})\rightarrow\mathbb{R}$. Composing with the isomorphism of dimension
groups $\dim:\Ko(C(\Gunit))\cong C(\Gunit,\mathbb{Z})$ gives us a state on the \Ko-group
\[\tilde{\beta}:=\beta\circ\dim:\Ko(C(\Gunit))\longrightarrow\mathbb{R}.\]
States on $\Ko(C(\Gunit))$ arise from traces, so let $\tau$ be the tracial state on
$C(\Gunit)$ such that $\Ko(\tau)=\tilde{\beta}$. Moreover, $\tau$ is simply integration
against a regular, Borel, probability measure $\mu$ on $\Gunit$. All together, if
$A\subseteq\Gunit$ is closed and open we get
\[\mu(A)=\int_{\Gunit}\ch_Ad\mu=\tau(\ch_A)=\Ko(\tau)([\ch_A]_0)=\tilde{\beta}([\ch_A]_0)=\beta\circ\dim([\ch_A]_0)=\beta(\ch_A).\]
The $G$-invariance of $\mu$ now follows from the invariance of $\beta$, indeed, if $E$ is
a compact open bisection, then
\[\mu(s(E))=\beta(\ch_{s(E)})=\beta(\ch_{r(E)})=\mu(r(E)).\]

We also claim that $\tau$ is faithful. To see this  we note that the measure $\mu$ has
full support. For if $\mu(U)=0$ for a non-empty open subset $U\subseteq\Gunit$, we can find
a non-empty closed and open $A\subseteq U$ with $\mu(A)=0$. It then follows that
$\beta(\ch_{A})=0$ which contradicts the fact that $\beta$ is faithful. Thus no such $U$
exists and so $\mu$ has full support whence $\tau$ is faithful. Finally, composing $\tau$
with the faithful conditional expectation $\mathbb{E}:\cstar_r(G)\rightarrow C(\Gunit)$
gives a faithful tracial state $\tau\circ\mathbb{E}:\cstar_r(G)\rightarrow\mathbb{C}$ as
desired.

For an amenable $G$, we have that $\cstar_r(G)$ is separable, nuclear, and satisfies the UCT. If, moreover, $\cstar_r(G)$ admits a faithful trace, then the main result in~\cite{TWW} ensures that $\cstar_r(G)$ is quasidiagonal.
\end{proof}

Using Proposition~\ref{prp:stable equiv}, we can extend the key statement of Theorem~\ref{theoremsfcnp} to the situation of groupoids with non-compact unit space.

\begin{corollary}\label{corollarysfcnp}
Let $G$ be an \'{e}tale, minimal, and ample groupoid. The following are
equivalent:
\begin{enumerate}
\item[(i)] $\cstar_r(G)$ admits a faithful semifinite trace $\tau$ such that $0 <\tau(\ch_K) < \infty$ for every compact open $K \subseteq G^{(0)}$;
\item[(ii)] $\cstar_r(G)$ is stably finite; and
\item[(iii)] $G$ satisfies the coboundary condition.
\end{enumerate}

If $G$ is also amenable, then properties $(i)$ through $(iii)$ are all equivalent to
\begin{enumerate}
\item[(iv)] The \cstar-algebra $\cstar_{r}(G)$ is quasidiagonal.
\end{enumerate}

\end{corollary}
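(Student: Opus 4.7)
The plan is to reduce Corollary~\ref{corollarysfcnp} to Theorem~\ref{theoremsfcnp} by passing from $G$ to a reduction onto a compact open subset of the unit space. Since $G$ is ample, one picks a non-empty compact open $U\subseteq G^{(0)}$ and forms the reduction $H:=G|_U=\{\gamma\in G\colon s(\gamma),r(\gamma)\in U\}$. Then $H$ is étale, ample, second countable, Hausdorff, with compact unit space $U$, and minimality passes from $G$ to $H$ because for each $u\in U$, $r(H_u)=r(G_u)\cap U$ is dense in $U$ (since $U$ is open and $r(G_u)$ dense in $G^{(0)}$). Since $U$ meets every $G$-orbit, $G$ and $H$ are equivalent groupoids; by~\cite{CRS} this yields $\cstar_r(G)\otimes\mathcal{K}\cong\cstar_r(H)\otimes\mathcal{K}$, and by the preceding corollary $S(G)\cong S(H)$ as pre-ordered monoids.

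I would organize the proof as the cycle $(i)\Rightarrow(ii)\Rightarrow(iii)\Rightarrow(ii')\Rightarrow(i)$, where $(ii')$ abbreviates ``$\cstar_r(H)$ is stably finite''. The step $(i)\Rightarrow(ii)$ is standard: a faithful lower-semicontinuous semifinite trace precludes infinite projections in any matrix amplification. For $(ii)\Rightarrow(iii)$, the $\mathrm{K}_0$-argument of Theorem~\ref{theoremsfcnp} goes through verbatim, since it does not invoke compactness of $G^{(0)}$. For $(iii)\Rightarrow(ii')$, the purely algebraic implication $(iii)\Rightarrow(iv)$ in Theorem~\ref{theoremsfcnp} gives that $G$ is completely non-paradoxical; Lemma~\ref{lemmaparadoxtype} translates this to complete non-paradoxicality of $S(G)$, and the isomorphism $S(G)\cong S(H)$ transports it to $H$; Theorem~\ref{theoremsfcnp} applied to the compact-unit-space groupoid $H$ then yields stable finiteness of $\cstar_r(H)$. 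The Morita equivalence finally gives $(ii')\Rightarrow(ii)$.

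For $(ii)\Rightarrow(i)$, the Morita equivalence yields stable finiteness of $\cstar_r(H)$, and Theorem~\ref{theoremsfcnp} produces a faithful tracial state $\tau_H$ on $\cstar_r(H)$, equivalently a faithful $H$-invariant probability measure $\mu_U$ on $U$. To extend $\mu_U$ to a faithful $G$-invariant Radon measure $\mu$ on $G^{(0)}$ that is finite on every compact open subset, I would argue as follows: given a compact open $V\subseteq G^{(0)}$ and $x\in V$, minimality yields $\alpha\in G$ with $s(\alpha)=x$ and $r(\alpha)\in U$, and a compact open bisection around $\alpha^{-1}$ together with compactness of $V$ produces a finite family of compact open bisections $E_1,\dots,E_n$ in $G$ with $s(E_j)\subseteq U$ and $V\subseteq\bigcup_j r(E_j)$. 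Disjointifying and restricting one obtains $F_1,\dots,F_n$ with $s(F_j)\subseteq U$ and $V=\bigsqcup_j r(F_j)$, and one sets $\mu(V):=\sum_j\mu_U(s(F_j))$. Invariance of $\mu_U$ under $H$-bisections (together with a refinement argument) shows the definition is independent of the choice of cover, and the resulting $\mu$ is $G$-invariant, Radon, and finite on compact open sets; minimality combined with faithfulness of $\mu_U$ yields full support. The trace $\tau:=I_\mu\circ\mathbb{E}$ is then the desired faithful semifinite trace on $\cstar_r(G)$ satisfying $0<\tau(\ch_K)<\infty$ for every compact open $K$. In the amenable case, amenability of $G$ passes to $H$, so Theorem~\ref{theoremsfcnp} provides quasidiagonality of $\cstar_r(H)$; since quasidiagonality is a stable-isomorphism invariant for separable $\cstar$-algebras, $\cstar_r(G)$ is also quasidiagonal, giving the equivalence of $(iv)$ with $(i)$--$(iii)$.

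The principal obstacle is the extension step in $(ii)\Rightarrow(i)$: although the covering of $V$ by bisections into $U$ is immediate from minimality, verifying that the assigned mass $\mu(V)$ is independent of the chosen cover, and that the resulting set function extends to a genuine $G$-invariant Radon measure, requires a careful refinement and invariance argument. Once this delicate bookkeeping is carried out, everything else follows by assembling Theorem~\ref{theoremsfcnp}, Proposition~\ref{prp:stable equiv}, and the Morita-invariance of the relevant $\cstar$-algebraic properties.
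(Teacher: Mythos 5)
Your proposal is correct and follows the same basic reduction as the paper: pass to the compact corner $H = UGU$ (for a non\-/empty compact open $U\subseteq\Gunit$), use minimality to see that $GU$ implements a $G$--$H$ equivalence, invoke \cite[Theorem~2.1]{CRS} to get $\mathcal{K}G\cong\mathcal{K}H$ and hence stable isomorphism of the \cstar-algebras, and then apply Theorem~\ref{theoremsfcnp} to the compact\-/unit\-/space groupoid $H$. Two points differ in execution. First, you transport the coboundary/non-paradoxicality condition from $G$ to $H$ through the type-semigroup isomorphism $S(G)\cong S(H)$, whereas the paper transports the coboundary condition itself through the stabilization (via the proof of Proposition~\ref{prp:stable equiv}); both are legitimate. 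Second, and more substantially, for $(ii)\Rightarrow(i)$ you build the semifinite trace by extending the $H$-invariant probability measure $\mu_U$ on $U$ to a $G$-invariant Radon measure on $\Gunit$ via a covering-and-disjointification argument, and you correctly identify the well-definedness of $\mu(V)$ as the delicate point (it does work: a common refinement of two covers $V=\bigsqcup_j r(F_j)=\bigsqcup_k r(F'_k)$ produces $H$-bisections $F_{j,k}^{-1}F'_{k,j}$ witnessing $\mu_U(s(F_{j,k}))=\mu_U(s(F'_{k,j}))$, and finite additivity on compact opens extends to a Radon measure on a totally disconnected space). The paper sidesteps this bookkeeping entirely: it lifts the faithful trace $\tau$ on $\cstar_r(H)$ to a faithful semifinite trace on $\cstar_r(\mathcal{K}H)$ by summing over the diagonal levels, transports it through $\mathcal{K}H\cong\mathcal{K}G$, and restricts to the corner $\cstar_r(G)$. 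The paper's route is shorter and avoids the measure-extension argument; yours yields a more explicit description of the trace as $I_\mu\circ\mathbb{E}$ for a concrete $G$-invariant measure $\mu$. The amenable case is handled identically in both.
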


\begin{proof}
If $\cstar_r(G)$ admits a nontrivial faithful semifinite trace as in~(i) then it is
stably finite because the collection $\{\ch_{K}\ |\ K \subseteq G^{(0)}\ \text{compact
open}\ \}$ forms an approximate identity for $\cstar_r(G)$.

Fix a compact open set $K \subseteq G^{(0)}$ and set $H := KGK = \{\gamma \in G :
r(\gamma), s(\gamma) \in K\}$. Since $G$ is minimal, we have $G^{(0)} = \{r(\gamma) :
s(\gamma) \in K\}$. Hence $G K$ is a $G$--$H$ equivalence. This gives $\mathcal{K}G \cong
\mathcal{K}H$ by \cite[Theorem~2.1]{CRS}. Also $\cstar_r(G)$ and $\cstar_r(H)$ are stably
isomorphic, and we deduce that $\cstar_r(G)$ is stably finite if and only if
$\cstar_r(H)$ is. By the proof of Proposition~\ref{prp:stable equiv} we see that $G$
satisfies the coboundary condition if and only its stabilization $\mathcal{K}G$ does.
Using with the fact that $G$ and $H$ are stably isomorphic we learn that $G$ satisfies
the coboundary condition if and only if $H$ does which in turn occurs if and only if
$\cstar_r(H)$ is stably finite by Theorem~\ref{theoremsfcnp}. Thus $\cstar_r(G)$ is
stably finite if and only if $G$ satisfies the coboundary condition.

Moreover, if $\cstar_r(G)$ is stably finite, then so is $\cstar_r(H)$, so there is a
faithful trace $\tau$ on $\cstar_r(H)$. From this we obtain a faithful semifinite trace
$\tilde\tau$ on $\cstar_r(\mathcal{K}H)$ satisfying $\tilde\tau(f) = \sum_{n \in
\mathbb{N}} \tau(f|_{H^{(0)} \times \{(n,n)\}})$ for $f \in C_c(\mathcal{K}H)$. Since
$\mathcal{K}G \cong \mathcal{K}H$, the map $\tilde\tau$ induces a semifinite trace on
$\cstar_r(\mathcal{K}G)$ which is nonzero and finite on the indicator function of any
compact open set of units. Restricting this semifinite trace to the corner $\cstar_r(G)$
of $\cstar_r(\mathcal{K}G)$ gives the result.

If $G$ is amenable and $\cstar_r(G)$ is stably finite, then $\cstar_r(H)$ is unital,
separable, nuclear, has a faithful trace and satisfies the UCT. Qasidiagonality of
$\cstar_r(H)$ again follows from the main result in~\cite{TWW}. Since $\cstar_r(G)$ and
$\cstar_r(H)$ are stably isomorphic we conclude that $\cstar_r(G)$ is quasidiagonal too.
\end{proof}

Thus stable finiteness for \cstar-algebras arising from minimal, \'{e}tale ample
groupoids $G$ is characterized by the `non-infinite' nature of the type semigroup $G$.
More precisely, if we call an element $\theta\in S(G)$ \emph{infinite} provided
$(n+1)\theta\leq n\theta$ for some $n\in\mathbb{N}$, then Theorem~\ref{theoremsfcnp} says
that $\cstar_r(G)$ is stably finite provided that $S(G)$ contains no infinite elements.
In the next section we shall look at the diametrically opposite setting where every
element in $S(G)$ is not only infinite, but `properly infinite'.

\section{Purely Infinite Groupoid \texorpdfstring{\cstar}{C*}-algebras}\label{sec:pi}

We now wish to characterize purely infinite reduced groupoid \cstar-algebras by the `properly infinite' nature of the corresponding type semigroup constructed above.  This coincides in spirit with the work of R{\o}rdam and Sierakowski in~\cite{RS} and of the first author in~\cite{Ra2}.

Recall that a projection $p$ in a \cstar-algebra $A$ is properly infinite if there are two subprojections $q,r\leq p$ with $qr=0$ and $q\sim p\sim r$. A unital \cstar-algebra $A$ is properly infinite if its unit $1_A$ is properly infinite. Purely infinite \cstar-algebras were introduced by J.\ Cuntz in~\cite{Cu}; an algebra $A$ is called purely infinite if every hereditary \cstar-subalgebra of $A$ contains a properly infinite
projection. It was a longstanding open question whether all unital, separable, simple, and nuclear  \cstar-algebra satisfied the stably finite/ purely infinite dichotomy until M. R{\o}rdam answered this query negatively in~\cite{Ro1}. He constructed a unital, simple, nuclear, and separable \cstar-algebra $D$ containing a finite projection $p$ and
an infinite projection $q$. It follows that $A=qDq$ is unital, separable, nuclear, simple, and properly infinite, but not purely infinite. It seems natural to ask if there is a smaller class of algebras for which a stably finite/purely infinite dichotomy holds. Theorem~\ref{dichotomy} below gives a partial answer in this direction.

We first present a necessary condition for a groupoid \cstar-algebra to be purely infinite.

\begin{proposition}
Let $G$ be an \'{e}tale groupoid. If $\cstar_r(G)$ is purely infinite, then  for any
non-empty compact open $U\subseteq\Gunit$, there is an open bisection $E\in\B$ with
$E\cap\Gunit=\emptyset$ such that
\[r(EU)\cap U\neq\emptyset.\]
In particular, for every non-empty compact open $U\subseteq\Gunit$, there is an
$\alpha\notin\Gunit$ with $r(\alpha), s(\alpha)\in U$.
\end{proposition}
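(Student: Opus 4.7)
The plan is to argue the contrapositive. Suppose there is a non-empty compact open $U \subseteq \Gunit$ such that $r(EU) \cap U = \emptyset$ for every open bisection $E$ with $E \cap \Gunit = \emptyset$; I will deduce that the hereditary $\cstar$-subalgebra $\ch_U \cstar_r(G) \ch_U$ is commutative, contradicting pure infiniteness.

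The first step is to show that $UGU := \{\gamma \in G : r(\gamma), s(\gamma) \in U\}$ is contained in $\Gunit$, i.e.\ equals $U$. Indeed, suppose $\gamma \in UGU$ with $\gamma \notin \Gunit$. Because $G$ is Hausdorff and étale, $\Gunit$ is closed in $G$, so $G \setminus \Gunit$ is open; combining this with the fact that $G$ has a basis of open bisections, we can choose an open bisection $E \ni \gamma$ with $E \cap \Gunit = \emptyset$. Since $s(\gamma) \in U$, we have $\gamma \in EU$, and hence $r(\gamma) \in r(EU) \cap U$, contradicting the standing assumption.

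The second step is to identify the hereditary subalgebra $\ch_U \cstar_r(G) \ch_U$ with $C_0(U)$. For $f \in C_c(G)$, a direct convolution computation gives $(\ch_U \cdot f \cdot \ch_U)(\gamma) = f(\gamma)\ch_U(r(\gamma))\ch_U(s(\gamma))$, which is supported in $UGU$. By the first step this support lies in $U \subseteq \Gunit$, so $\ch_U f \ch_U$ agrees with the restriction $f|_U \in C_c(U) \subseteq C_0(\Gunit) \subseteq \cstar_r(G)$. Norm-density of $C_c(G)$ in $\cstar_r(G)$ then gives $\ch_U \cstar_r(G) \ch_U \subseteq C_0(U)$, and the reverse containment is immediate since $C_0(U) \subseteq C_0(\Gunit) \subseteq \cstar_r(G)$ with $\ch_U g \ch_U = g$ for every $g \in C_0(U)$.

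The final step is the contradiction and the ``in particular'' clause. Since $\ch_U \cstar_r(G) \ch_U \cong C_0(U)$ is commutative, it contains no infinite projection; but pure infiniteness of $\cstar_r(G)$ requires every non-zero hereditary $\cstar$-subalgebra to contain a (properly) infinite projection, a contradiction. For the last sentence, given an $E \in \B$ with $E \cap \Gunit = \emptyset$ and some $u \in r(EU) \cap U$, pick $\alpha \in E$ with $r(\alpha) = u$ and $s(\alpha) \in U$; then $\alpha \notin \Gunit$ has both $r(\alpha), s(\alpha) \in U$. The only step requiring any care is the second, where one must check that the calculation of $\ch_U f \ch_U$ really produces an element of $C_0(U)$ and that no further elements arise in the closure; both follow painlessly from the convolution formula together with $UGU = U$.
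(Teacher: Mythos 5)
Your argument is correct, and it takes a genuinely different route from the paper's. The paper argues directly: pure infiniteness makes $p=\ch_U$ properly infinite, yielding partial isometries $x,y$ with $x^*x=y^*y=p$ and orthogonal ranges under $p$; approximating $x$ by compressions $x_n=pa_np$ with $a_n\in C_c(G)$, decomposing each $x_n$ over finitely many bisections (one contained in $\Gunit$, the rest disjoint from it), and noting that the $x_n$ cannot eventually be normal (else $p$ would be finite), the authors extract a nonzero $\ch_U f\ch_U$ with $f$ supported on a bisection $E$ disjoint from $\Gunit$, whence $r(EU)\cap U\neq\emptyset$. You instead take the contrapositive and show that failure of the conclusion for some $U$ forces $UGU=U$, so the corner $\ch_U\cstar_r(G)\ch_U$ collapses to the commutative algebra $C(U)$ --- impossible in a purely infinite \cstar-algebra, since that nonzero hereditary subalgebra must contain a properly infinite projection while a commutative algebra has no infinite projections at all (every element is normal, so $v^*v=p$ forces $vv^*=p$). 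Both proofs rest on the same structural facts (that $\Gunit$ is clopen in a Hausdorff \'etale groupoid and that open bisections form a basis), and your identification of the corner is justified correctly: the convolution formula gives $\ch_U C_c(G)\ch_U\subseteq C_c(U)$, and $C(U)$ is closed in $\cstar_r(G)$ because the reduced norm restricts to the sup norm on $C_0(\Gunit)$. Your route is shorter, avoids the approximation/normality argument entirely, and yields the stronger conclusion that under the negated hypothesis the corner is literally $C(U)$, so even the existence of a single infinite projection there would already give the contradiction.
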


\begin{proof}
Since $\cstar_r(G)$ is purely infinite the projection $p=\ch_U$ is properly infinite
(Theorem 4.16 in~\cite{KiR}). So there are $x,y\in\cstar_r(G)$ that satisfy
\[x^*x=p=y^*y,\quad xx^*\perp yy^*,\quad xx^*, yy^*\leq p.\]
The $\ast$-algebra $C_c(G)$ is norm-dense in $\cstar_r(G)$, so we may find sequences
$(a_n)_n, (b_n)_n$ in $C_c(G)$ converging to $x$ and $y$ respectively. We now compress by
setting $x_n:=pa_np$ and $y_n:=pb_np$ and note the following:
\[(x_n)_n\longrightarrow x\quad\text{since}\quad x_n=pa_np\longrightarrow pxp=pxx^*x=xx^*x=x,\]
\[(y_n)_n\longrightarrow y\quad\text{by a similar argument},\]
\[(x_n^*x_n)_n\longrightarrow p\quad\text{since}\quad x_n^*x_n\longrightarrow x^*x=p,\]
\[\text{similarly}\quad (y_n^*y_n)_n\longrightarrow p,\quad\text{and}\]
\[(x_n^*y_n)_n\longrightarrow 0\quad\text{since}\quad x_n^*y_n\longrightarrow x^*y=x^*xx^*yy^*y=0.\]
Moreover, the $x_n$ cannot be normal for all large $n$. To see why, suppose
$x_{n}^*x_{n}=x_{n}x_{n}^*$ for $n$ large, then we would have
\[p=p^2=\left(\lim_{n}x_n^*x_n\right)p=\left(\lim_{n}x_nx_n^*\right)p=xx^*p=xx^*\]
which contradicts the fact that $p$ is infinite. By passing to a subsequence we may
assume that all the $x_n$ are non-normal.

Since $G$ is \'{e}tale we can write $x_n=\sum_{k=1}^{K_n}f_{n,k}$ such that, for a fixed
$n$, the $f_{n,k}\in C_c(G)$ are non-zero and supported on distinct open bisections, say
$E_{n,k}$, with $E_{n,1}\subseteq\Gunit$ and $E_{n,k}\cap\Gunit=\emptyset$ for $2\leq k\leq
K_n$. Since $px_np=x_n$ we get
\[x_n=\sum_{k=1}^{K_n}f_{n,k}=\sum_{k=1}^{K_n}\ch_Uf_{n,k}\ch_U.\]
If $\ch_Uf_{n,k}\ch_U=0$ for $k=2,\dots, K_n$, then $x_n=\ch_Uf_{n,1}\ch_U$ is normal,
contradicting our assumption. Therefore, there is an open bisection, say $E$, with
$E\cap\Gunit=\emptyset$, and a non-zero $f\in C_c(G)$ supported in $E$ such that
$\ch_Uf\ch_U\neq 0$. But
\[\emptyset\neq\supp(\ch_Uf\ch_U)\subseteq UEU=\{\alpha\ |\ \alpha\in E, s(\alpha), r(\alpha)\in U\}.\]
It now follows that $r(EU)\cap U\neq\emptyset$.
\end{proof}

As alluded to above, it is the properly infinite nature of the type semigroup $S(G)$ that
will generate properly infinite projections in $\cstar_r(G)$. This is seen in
Lemma~\ref{pilemma} below. For this reason we introduce some terminology. An element
$\theta$ in a pre-ordered abelian group $S$ is said to \emph{properly infinite} if
$2\theta\leq\theta$, that is, if it is $(2,1)$-paradoxical, or equivalently it is
$(k,1)$-paradoxical for any $k\geq 2$. If every member of $S$ is properly infinite then
$S$ is said to be \emph{purely infinite}. A pre-ordered monoid $S$ is said to be
\emph{almost unperforated} if, whenever $\theta,\eta\in S$, and $n,m\in\mathbb{N}$ are
such that $n\theta\leq m\eta$ and $n>m$, then $\theta\leq\eta$.

\begin{lemma}\label{pilemma}
Let $G$ be an \'{e}tale and ample groupoid, and suppose $A\subseteq\Gunit$ is a compact
open subset. Let $\theta=[\ch_A]_G$ denote the class of $\ch_A$ in $S(G)$. The following
are equivalent.
\begin{enumerate}
\item[(i)] There are mutually disjoint compact open bisections
\[F_1,\dots, F_n, H_1,\dots, H_n\in\C\]
satisfying the following: writing $x=\sum_{i=1}^{n}\ch_{F_i}$ and
$y=\sum_{i=1}^{n}\ch_{H_i}$ in $\cstar_r(G)$,
\[x^*x=\ch_A,\quad y^*y=\ch_A,\quad xx^*+yy^*\leq\ch_A.\]
\item[(ii)] $A$ is $(k,1)$-paradoxical for some $k\geq2$.
\item[(iii)] $\theta$ is properly infinite in $S(G)$.
\end{enumerate}
\end{lemma}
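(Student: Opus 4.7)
\medskip

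\noindent\textbf{Proof plan.}
The equivalence of (ii) and (iii) is immediate from Lemma~\ref{lemmaparadoxtype}: that lemma says $A$ is $(k,1)$-paradoxical iff $k\theta \leq \theta$ in $S(G)$. Since $2\theta\leq\theta$ inductively implies $k\theta\leq\theta$ for every $k\geq 2$, this is precisely the statement that $\theta$ is properly infinite. So it suffices to establish $\text{(i)}\Leftrightarrow\text{(ii)}$.

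For $\text{(i)}\Rightarrow\text{(ii)}$, I would just compute, following the template set in the proof of the first proposition of Section~\ref{sec:paradoxicality}. Since the $F_i$ are disjoint bisections, $F_i^{-1}F_j=\emptyset$ for $i\neq j$ and $F_i^{-1}F_i=s(F_i)$, so $x^*x=\ch_A$ forces $\sum_i\ch_{s(F_i)}=\ch_A$ (and similarly $\sum_j\ch_{s(H_j)}=\ch_A$). Also $xx^*=\sum_i\ch_{r(F_i)}$ and $yy^*=\sum_j\ch_{r(H_j)}$, so $xx^*+yy^*\leq\ch_A$ gives $\sum_i\ch_{r(F_i)}+\sum_j\ch_{r(H_j)}\leq\ch_A$. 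Adding the two $s$-identities produces $2\ch_A=\sum_i\ch_{s(F_i)}+\sum_j\ch_{s(H_j)}$, so $\{F_1,\dots,F_n,H_1,\dots,H_n\}$ witnesses that $A$ is $(2,1)$-paradoxical.

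For $\text{(ii)}\Rightarrow\text{(i)}$, start with compact open bisections $E_1,\dots,E_n$ satisfying $k\ch_A\leq\sum_i\ch_{s(E_i)}$ and $\sum_i\ch_{r(E_i)}\leq\ch_A$ with $k\geq 2$. The condition on the ranges means the $r(E_i)$ are pairwise disjoint and contained in $A$, which also forces the $E_i$ themselves to be pairwise disjoint. Replace each $E_i$ by $E_i':=E_i\cap s^{-1}(A)=(s|_{E_i})^{-1}(s(E_i)\cap A)$; these are still pairwise disjoint compact open bisections, now with $s(E_i')\subseteq A$, and on $A$ the inequality $2\ch_A\leq\sum_i\ch_{s(E_i')}$ persists. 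Next, form the finite clopen partition of $A$ given by the pieces
\[
P_I \;:=\; \bigcap_{i\in I} s(E_i') \,\cap\, \bigcap_{i\notin I}\bigl(A\setminus s(E_i')\bigr),\qquad I\subseteq\{1,\dots,n\},
\]
keeping only the nonempty $P_I$. For each such $I$ the above inequality gives $|I|\geq 2$, so we may choose two distinct indices $i_1(I),i_2(I)\in I$.

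Now for each $j\in\{1,\dots,n\}$ set
\[
F_j \;:=\; (s|_{E_j'})^{-1}\Bigl(\bigsqcup_{I:\,i_1(I)=j} P_I\Bigr),\qquad H_j \;:=\; (s|_{E_j'})^{-1}\Bigl(\bigsqcup_{I:\,i_2(I)=j} P_I\Bigr).
\]
Each $F_j,H_j$ is a compact open bisection contained in $E_j'$. By construction $\bigsqcup_j s(F_j)=A=\bigsqcup_j s(H_j)$, yielding $x^*x=\ch_A=y^*y$ as in (i). Disjointness of all $F_j,H_{j'}$ for $j\neq j'$ follows from disjointness of the $E_j'$, while $F_j\cap H_j=\emptyset$ because $i_1(I)\neq i_2(I)$ ensures no piece $P_I$ contributes to both. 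Consequently the $r$-sets $r(F_j),r(H_{j'})$ are pairwise disjoint (using injectivity of $r$ on each bisection for $j=j'$, and disjointness of the $r(E_j)$ otherwise), and together they sit inside $\bigsqcup_j r(E_j)\subseteq A$, giving $xx^*+yy^*\leq\ch_A$.

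The only genuinely non-routine step is the combinatorial refinement used in $\text{(ii)}\Rightarrow\text{(i)}$ — ensuring that the two copies of $A$ can be pieced together from fragments of the $E_i'$ without overlap. Once the clopen partition $\{P_I\}$ is in hand and the double-index choice $(i_1(I),i_2(I))$ is made, the disjointness bookkeeping and the required identities fall out cleanly from the bisection identities already catalogued in Section~\ref{sec:background}.
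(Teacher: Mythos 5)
Your proposal follows essentially the same route as the paper: (ii)$\Leftrightarrow$(iii) via Lemma~\ref{lemmaparadoxtype}, (ii)$\Rightarrow$(i) by cutting the $E_i$ down along a partition of $A$ into two disjoint families of sub\-bisections, and (i)$\Rightarrow$(ii) by reading off the source/range identities. Your clopen partition $\{P_I\}$ with the choice of two indices $i_1(I)\neq i_2(I)$ is a nice explicit rendering of the step the paper compresses into ``by partitioning copies of $A$''; the rest of the bookkeeping matches the paper's.

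One justification in your (i)$\Rightarrow$(ii) is wrong as stated, though the conclusion survives. You assert that disjointness of the bisections $F_i$ gives $F_i^{-1}F_j=\emptyset$ for $i\neq j$. That is false in general: disjointness only gives $F_i^{-1}F_j\cap\Gunit=\emptyset$ (this is exactly the fact recorded in Section~\ref{introgroupoids}); the set $F_i^{-1}F_j$ itself can be nonempty, e.g.\ for two distinct group elements viewed as bisections of a group. The paper sidesteps this by applying the conditional expectation $\mathbb{E}$ to $x^*x=\ch_A$ and to $xx^*+yy^*\leq\ch_A$, which annihilates the off-diagonal terms $\ch_{F_i^{-1}F_j}$ and $\ch_{F_iF_j^{-1}}$ precisely because they are supported off the unit space. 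Alternatively you can repair your version directly: the equality $x^*x=\sum_{i,j}\ch_{F_i^{-1}F_j}=\ch_A$ holds as functions in $C_c(G)$, and since the off-diagonal summands are nonnegative and supported off $\Gunit$ while $\ch_A$ vanishes there, each $F_i^{-1}F_j$ with $i\neq j$ must be empty; moreover $\sum_i\ch_{s(F_i)}=\ch_A\leq 1$ then forces the $s(F_i)$ to be pairwise disjoint, whence $F_iF_j^{-1}=\emptyset$ for $i\neq j$ and $xx^*=\sum_i\ch_{r(F_i)}$ as you claimed. Either repair is two lines, but as written the step is not justified.
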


\begin{proof}
$(i)\Rightarrow(ii)$: By assumption we have
\[\ch_A=x^*x=\bigg(\sum_{i=1}^{n}\ch_{F_i}\bigg)^{*}\bigg(\sum_{i=1}^{n}\ch_{F_i}\bigg)=\sum_{i,j}\ch_{F_i}^*\ch_{F_j}=\sum_{i,j}\ch_{F_i^{-1}}\ch_{F_j}=\sum_{i,j}\ch_{F_i^{-1}F_j}.\]
Applying the conditional expectation $\mathbb{E}$ on both sides we get
\[\ch_A=\mathbb{E}(\ch_A)=\mathbb{E}\bigg(\sum_{i,j}\ch_{F_i^{-1}F_j}\bigg)=\sum_{i,j}\mathbb{E}(\ch_{F_i^{-1}F_j})=\sum_{i,j}\ch_{F_i^{-1}F_j\cap\Gunit}=\sum_{i=1}^{n}\ch_{s(F_i)},\]
where we have used the fact that the $F_i$ are mutually disjoint to ensure that for
$i\neq j$, $F_i^{-1}F_j\cap\Gunit=\emptyset$, and $F_i^{-1}F_i=s(F_i)$. By a similar
calculation we also have
\[\ch_A=\sum_{i=1}^{n}\ch_{s(H_i)}.\]
Moreover, applying the conditional expectation to
\begin{align*}\ch_A\geq xx^*+yy^*&=\bigg(\sum_{i=1}^{n}\ch_{F_i}\bigg)\bigg(\sum_{i=1}^{n}\ch_{F_i}\bigg)^*+\bigg(\sum_{i=1}^{n}\ch_{H_i}\bigg)\bigg(\sum_{i=1}^{n}\ch_{H_i}\bigg)^*\\&=\sum_{i,j}\ch_{F_iF_j^{-1}}+\sum_{i=i,j}\ch_{H_iH_j^{-1}}
\end{align*}
gives
\begin{align*}
\ch_A&=\mathbb{E}(\ch_A)\geq\mathbb{E}\bigg(\sum_{i,j}\ch_{F_iF_j^{-1}}+\sum_{i,j}\ch_{H_iH_j^{-1}}\bigg)=\sum_{i,j}\ch_{F_iF_j^{-1}\cap\Gunit}+\sum_{i=i,j}\ch_{H_iH_j^{-1}\cap\Gunit}\\&=\sum_{i=1}^{n}\ch_{F_iF_i^{-1}}+\sum_{i=1}^{n}\ch_{H_iH_i^{-1}}=\sum_{i=1}^{n}\ch_{r(F_i)}+\sum_{i=1}^{n}\ch_{r(H_i)}.
\end{align*}
Again here we use disjointness so that
$F_iF_j^{-1}\cap\Gunit=\emptyset=H_iH_j^{-1}\cap\Gunit$ for $i\neq j$. All together,
\[2\ch_A=\sum_{i=1}^{n}\ch_{s(F_i)}+\sum_{i=1}^{n}\ch_{s(H_i)},\quad\text{and}\quad\sum_{i=1}^{n}\ch_{r(F_i)}+\sum_{i=1}^{n}\ch_{r(H_i)}\leq\ch_A\]
which says that $\ch_A$ is $(2,1)$-paradoxical.

$(ii)\Rightarrow(i)$: We may suppose that $k=2$ since $2\ch_A\leq k\ch_A$. There are,
therefore, compact open bisections $E_1,\dots, E_n$ in $G$ with
\[2\ch_A\leq\sum_{i=1}^{n}\ch_{s(E_i)},\quad\text{and}\quad\sum_{i=1}^{n}\ch_{r(E_i)}\leq\ch_A.\]
This condition implies that the $r(E_i)$ are mutually disjoint and therefore the
bisections $E_i$ are themselves mutually disjoint.

The inequality $2\ch_A\leq\sum_{i=1}^{n}\ch_{s(E_i)}$ implies that by partitioning copies
of $A$, we can find compact open sets $\{A_{i,j}\ |\ 1\leq i\leq n, 1\leq j\leq 2\}$ such
that
\[\bigsqcup_{i=1}^{n}A_{i,1}=A,\quad \bigsqcup_{i=1}^{n}A_{i,2}=A,\quad A_{i,1}\sqcup A_{i,2}\subseteq s(E_i)\quad\forall i\in\{1,\dots,n\}. \]
Now for each $i=1,\dots, n$ set
\[F_i:=(s|_{E_i})^{-1}({A_{i,1}})\quad\text{and}\quad H_i:=(s|_{E_i})^{-1}({A_{i,2}}).\]
Note the following:
\[F_i\subseteq E_i,\quad s(F_i)=A_{i,1},\quad i\neq j\Rightarrow r(F_i)\cap r(F_j)=\emptyset,\]
\[H_i\subseteq E_i,\quad s(H_i)=A_{i,2},\quad i\neq j\Rightarrow r(F_i)\cap r(F_j)=\emptyset.\]
Moreover, $F_i\cap H_i=\emptyset$ for every $i$, and since the $E_i$ are disjoint, it
follows that the compact bisections $F_1,\dots, F_n, H_1,\dots, H_n$ are mutually
disjoint.

Using these facts and writing $x$ and $y$ as in the statement of the lemma we get
\[x^*x=\bigg(\sum_{i=1}^{n}\ch_{F_i}\bigg)^{*}\bigg(\sum_{i=1}^{n}\ch_{F_i}\bigg)=\sum_{i,j}\ch_{F_i^{-1}F_j}=\sum_{i=1}^{n}\ch_{s(F_i)}=\sum_{i=1}^{n}\ch_{A_{i,1}}=\ch_A,\]
\[y^*y=\bigg(\sum_{i=1}^{n}\ch_{H_i}\bigg)^{*}\bigg(\sum_{i=1}^{n}\ch_{H_i}\bigg)=\sum_{i,j}\ch_{H_i^{-1}H_j}=\sum_{i=1}^{n}\ch_{s(H_i)}=\sum_{i=1}^{n}\ch_{A_{i,2}}=\ch_A,\]
where we use the fact that $F_i^{-1}F_j=\emptyset=H_i^{-1}H_j=\emptyset$ for $i\neq j$
since these have disjoint ranges. Now using the fact that $s(F_i)\cap
s(F_j)=\emptyset=s(H_i)\cap s(H_j)$ for $i\neq j$, and that $E_i$ are bisections we get
\begin{align*}xx^*+yy^*&=\sum_{i,j}\ch_{F_iF_j^{-1}}+\sum_{i,j}\ch_{H_iH_j^{-1}}=\sum_{i=1}^{n}\ch_{r(F_i)}+\sum_{i=1}^{n}\ch_{r(H_i)}\\&=\sum_{i=1}^{n}(\ch_{r(F_i)}+\ch_{r(H_i)})\leq\sum_{i=1}^{n}\ch_{r(E_i)}\leq\ch_A,
\end{align*}
thus $(i)$ holds.

The equivalence $(ii)\Leftrightarrow(iii)$ follows directly from
Lemma~\ref{lemmaparadoxtype}.
\end{proof}

We now come to the main result of this section. We use the constructed type semigroup
$S(G)$ to characterize purely infinite \cstar-algebras that arise from ample groupoids.
The following theorem is in the same spirit of Theorem 5.4 of~\cite{RS} and Theorem 5.6
of~\cite{Ra2}.

\begin{theorem}\label{pitheorem} Let $G$ be a topological groupoid which is \'{e}tale, ample, minimal, and topologically principal. Consider the following properties:
\begin{enumerate}
\item[(i)] The semigroup $S(G)$ is purely infinite.
\item[(ii)] Every non-empty compact open $A\subseteq\Gunit$ is properly paradoxical.
\item[(iii)] The $\cstar$-algebra $\cstar_r(G)$ is purely infinite.
\item[(iv)] The $\cstar$-algebra $\cstar_r(G)$ admits no tracial state.
\item[(v)] The semigroup $S(G)$ admits no non-trivial state.
\end{enumerate}
Then the following implications always hold:
$(i)\Leftrightarrow(ii)\Rightarrow(iii)\Rightarrow(iv)\Rightarrow(v)$. If the semigroup
$S(G)$ is almost unperforated then $(v)\Rightarrow(i)$ and all properties are equivalent.
\end{theorem}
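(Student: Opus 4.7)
The plan is to prove the unconditional chain $(i)\Leftrightarrow(ii)\Rightarrow(iii)\Rightarrow(iv)\Rightarrow(v)$, and then close the loop with $(v)\Rightarrow(i)$ using almost unperforation.

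For $(i)\Leftrightarrow(ii)$ I would appeal directly to Lemma~\ref{pilemma} together with Lemma~\ref{lemmaparadoxtype}: every element of $S(G)$ is a finite sum of classes of the form $[\ch_A]_G$ with $A$ non-empty compact open, and a sum of properly infinite elements in a pre-ordered abelian monoid is again properly infinite. For $(ii)\Rightarrow(iii)$, Lemma~\ref{pilemma}(i) shows that each $\ch_A$ is a properly infinite projection in $\cstar_r(G)$. Since $G$ is minimal and topologically principal (and Hausdorff), $\cstar_r(G)$ is simple. To upgrade to pure infiniteness, I would prove that every non-zero positive $a\in\cstar_r(G)$ Cuntz-dominates some $\ch_A$ with $A$ a non-empty compact open subset of $\Gunit$: this uses the faithfulness of the conditional expectation $\mathbb{E}$ together with topological principality, via the standard ``local'' approximation argument (approximating $a$ by an element of $C_c(G)^+$ and cutting down by a suitable element of $C_c(\Gunit)^+$ to trap $a$ above a nonzero function in $C_0(\Gunit)^+$ whose support contains a compact open set $A$). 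Combining Cuntz-domination, pure infiniteness of $\ch_A$, and simplicity yields pure infiniteness of $\cstar_r(G)$.

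The implication $(iii)\Rightarrow(iv)$ is standard: in a simple purely infinite $\cstar$-algebra every non-zero projection is properly infinite, forcing $\tau(p)=2\tau(p)$ for any tracial state $\tau$, hence $\tau\equiv 0$. For $(iv)\Rightarrow(v)$ I proceed by contrapositive. Given a non-trivial state $\nu:S(G)\to[0,\infty]$, the composition $\beta:=\nu\circ\pi$ on $C_c(\Gunit,\mathbb{Z})^+$ is $G$-invariant because $\ch_{s(E)}\sim_G\ch_{r(E)}$ for every compact open bisection $E$. Non-triviality supplies a non-empty compact open $A$ with $0<\beta(\ch_A)<\infty$, and the minimality argument from Lemma~\ref{lemmacnp} (covering an arbitrary compact open $B$ by finitely many $r(E_jA)$) propagates finiteness to $\beta(\ch_B)<\infty$ for every compact open $B\subseteq\Gunit$. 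This yields a $G$-invariant Radon measure $\mu$ on $\Gunit$ that is non-zero and finite on compact opens, and integrating $\mu$ against $\mathbb{E}$ produces a tracial state on $\cstar_r(G)$ (after normalization in the compact unit-space case, and via the stabilization $\mathcal{K}G$ of Proposition~\ref{prp:stable equiv} in the non-compact case, as in Corollary~\ref{corollarysfcnp}).

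Finally, for $(v)\Rightarrow(i)$ under the almost unperforation hypothesis, Tarski's theorem (Theorem~\ref{Tarski}) shows that absence of a non-trivial state on $S(G)$ forces every $\theta\in S(G)$ to fail to be completely non-paradoxical, so there is some $n\in\mathbb{N}$ with $(n+1)\theta\leq n\theta$. Adding $\theta$ inductively gives $(n+k)\theta\leq n\theta$ for every $k\geq 0$, and taking $k=n+2$ produces
\[
(n+1)(2\theta)=(2n+2)\theta\leq n\theta.
\]
Since $n+1>n$, almost unperforation delivers $2\theta\leq\theta$, so $\theta$ is properly infinite and $S(G)$ is purely infinite.

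The principal obstacle will be the Cuntz-domination step in $(ii)\Rightarrow(iii)$: turning an arbitrary non-zero positive element of $\cstar_r(G)$ into a compact open projection from $C_0(\Gunit)$ via Cuntz subequivalence, using only faithfulness of $\mathbb{E}$ and topological principality. The remaining implications amount to monoid manipulations, Tarski's theorem, and the explicit formulas already in hand.
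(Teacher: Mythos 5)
Your proposal follows essentially the same route as the paper for every implication except one substep of $(ii)\Rightarrow(iii)$. There, the paper simply invokes the main result of \cite{BCS}, which says that for an ample, Hausdorff, minimal, topologically principal groupoid, $\cstar_r(G)$ is purely infinite as soon as $\ch_A$ is properly infinite in $\cstar_r(G)$ for every non-empty compact open $A\subseteq\Gunit$; combined with Lemma~\ref{pilemma} this finishes the implication in one line. You instead propose to reprove that reduction by showing every non-zero positive element Cuntz-dominates some $\ch_A$, using faithfulness of $\mathbb{E}$ and topological principality. That is exactly the content of the cited theorem, and your sketch of it (approximate by $C_c(G)^+$, cut down to the unit space, trap a compact open set under the resulting function) is the right idea but is the genuinely technical part of \cite{BCS}; you correctly flag it as the principal obstacle, and in a written-up version you should either cite \cite{BCS} or supply those estimates in full. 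The remaining implications match the paper: $(i)\Leftrightarrow(ii)$ via Lemmas \ref{lemmaparadoxtype} and \ref{pilemma} plus additivity of the quotient map, $(iv)\Rightarrow(v)$ by converting a non-trivial state on $S(G)$ into an invariant measure and hence a trace (your explicit treatment of the non-compact unit space via stabilization is in fact slightly more careful than the paper's proof, which is phrased as though $\Gunit$ is compact), and $(v)\Rightarrow(i)$ by the identical Tarski-plus-almost-unperforation computation $(n+1)(2\theta)\leq n\theta\Rightarrow 2\theta\leq\theta$.
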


\begin{proof}
$(i)\Rightarrow(ii)$: Let $A\subseteq\Gunit$ be compact and open. Since we are assuming that
$\theta:=[\ch_A]_G$ is properly infinite, we have $2\theta\leq\theta$.
Lemma~\ref{lemmaparadoxtype} now says that $A$ is $(2,1)$-paradoxical whence properly paradoxical.

$(ii)\Rightarrow(i)$: Let $\theta=[f]_G$ be a non-zero element in $S(G)$. We may write $f=\sum_k\ch_{A_k}$ where $A_k\subseteq\Gunit$ are non-zero compact open subsets. Setting $\theta_k:=[\ch_{A_k}]_G$, again Lemma~\ref{lemmaparadoxtype} implies that $2\theta_k\leq\theta_k$ for each $k$. The quotient map is additive so $\sum_k\theta_k=\theta$. It easily follows that $\theta$ is properly paradoxical since
\[2\theta=2\sum_k\theta_k=\sum_k2\theta_k\leq\sum_{k}\theta_k=\theta.\]

$(ii)\Rightarrow(iii)$: By the main result in~\cite{BCS} it suffices to check that every
non-empty compact open set $A\subseteq\Gunit$ defines a properly infinite projection
$\ch_A$ in $\cstar_r(G)$. By our assumption every such $A$ is $(2,1)$-paradoxical, and
Lemma~\ref{pilemma} implies that $\ch_A$ is properly infinite.

$(iii)\Rightarrow(iv)$: Purely infinite algebras are always traceless.

$(iv)\Rightarrow(v)$: Suppose $\nu:S(G)\rightarrow[0,\infty]$ is a non-trivial state.
Suppose $0<\nu([\ch_{U}]_G)<\infty$ where $U\subseteq\Gunit$ is a non-empty compact open
subset. Composing with the quotient  map $\pi:C(\Gunit,\mathbb{Z})^+\rightarrow S(G)$ we
get an order preserving monoid homomorphism
$\beta=\nu\circ\pi:C(\Gunit,\mathbb{Z})^+\rightarrow[0,\infty]$ with
$0<\beta(\ch_U)<\infty$. As in the proof of Proposition~\ref{theoremsfcnp}, minimality of
$G$ ensures that $\beta$ is finite on all of $C(\Gunit,\mathbb{Z})^+$. Extending $\beta$
to $C(\Gunit,\mathbb{Z})$ gives an invariant positive group homomorphism,
$\beta:C(\Gunit,\mathbb{Z})\rightarrow\mathbb{R}$. Identifying
$C(\Gunit,\mathbb{Z})\cong\Ko(C(\Gunit))$ as in the proof of Theorem~\ref{theoremsfcnp}
we see that $\beta$ comes from a $G$-invariant Borel probability measure which induces a
tracial state on $\cstar_r(G)$, a contradiction.

Now we assume that $S(G)$ is almost unperforated and prove $(v)\Rightarrow(i)$. Let
$\theta$ be a non-zero element in $S(G)$. If $\theta$ is completely non-paradoxical then
by Tarski's Theorem $S(G)$ admits a non-trivial state. So, assuming $(v)$, we must have
$(k+1)\theta\leq k\theta$ for some $k\in\mathbb{N}$. So
\[(k+2)\theta=(k+1)\theta+\theta\leq k\theta+\theta=(k+1)\theta\leq k\theta.\]
Repeating this we get $(k+1)2\theta\leq k\theta$. Since $S(G)$ is almost unperforated we
conclude $2\theta\leq\theta$ and $\theta$ is properly infinite.
\end{proof}

As promised, we combine Theorem~\ref{theoremsfcnp}, Corollary~\ref{corollarysfcnp} and Theorem~\ref{pitheorem} to obtain the long-desired
dichotomy.

\begin{theorem}\label{dichotomy}
Let $G$ be an \'{e}tale, ample, minimal, and topologically principal groupoid with an almost unperforated type semigroup $S(G)$.
\begin{enumerate}
\item[(i)] The \cstar-algebra $\cstar_r(G)$ is simple and is either purely infinite or stably finite.
\item [(ii)] If $G$ is also amenable, then $\cstar_r(G)$ is simple and is either purely infinite or quasidiagonal.
\end{enumerate}
 \end{theorem}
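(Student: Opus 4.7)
The plan is to combine Theorem~\ref{pitheorem} with Corollary~\ref{corollarysfcnp}, together with the standard fact that a Hausdorff \'etale groupoid that is both minimal and topologically principal has simple reduced \cstar-algebra. Given simplicity, any non-zero tracial state on $\cstar_r(G)$ is automatically faithful, because its tracial left kernel would otherwise be a proper closed two-sided ideal.

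For the dichotomy in (i) I would first handle the case that $\Gunit$ is compact, so that $\cstar_r(G)$ is unital. Since $S(G)$ is almost unperforated, Theorem~\ref{pitheorem} renders conditions (i)--(v) there equivalent; in particular, $\cstar_r(G)$ is purely infinite if and only if it admits no tracial state. Hence either $\cstar_r(G)$ is purely infinite, or it carries a tracial state, which is faithful by simplicity and therefore witnesses stable finiteness. For the general case I would follow the reduction already used in the proof of Corollary~\ref{corollarysfcnp}: pick any non-empty compact open $K\subseteq\Gunit$ and set $H=KGK$. One checks that $H$ is an ample Hausdorff \'etale groupoid with compact unit space $K$, inheriting minimality (orbits meet $K$ densely in $K$, since $K$ is open and every orbit is dense in $\Gunit$) and topological principality (because $\Iso(H)=\Iso(G)\cap H$ is open in $H$ precisely where $\Iso(G)$ is open in $G$). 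By Proposition~\ref{prp:stable equiv} together with \cite[Theorem~2.1]{CRS}, there are isomorphisms $S(H)\cong S(\mathcal{K}H)\cong S(\mathcal{K}G)\cong S(G)$, so $S(H)$ is almost unperforated; the same groupoid equivalence gives that $\cstar_r(H)$ and $\cstar_r(G)$ are stably isomorphic. The compact-unit-space case applied to $H$ yields the dichotomy for $\cstar_r(H)$, and since both stable finiteness and pure infiniteness are preserved by stable isomorphism, the same dichotomy transfers back to $\cstar_r(G)$.

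Part (ii) is then immediate: if $G$ is amenable and $\cstar_r(G)$ is not purely infinite, then by (i) it is stably finite, and Corollary~\ref{corollarysfcnp}(iv) upgrades this to quasidiagonality. The most delicate step I anticipate is the non-compact reduction in (i): one must check carefully that minimality and topological principality descend to $H=KGK$, that $S(H)\cong S(G)$ really does inherit almost unperforation through the chain of isomorphisms from Proposition~\ref{prp:stable equiv}, and that the groupoid equivalence $GK$ between $G$ and $H$ does transfer pure infiniteness (not only stable finiteness) through the resulting stable isomorphism of the reduced \cstar-algebras. These verifications are routine in spirit but worth spelling out.
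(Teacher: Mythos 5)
Your proposal is correct and follows essentially the route the paper intends: the paper offers no written proof beyond the remark that the theorem follows by combining Theorem~\ref{theoremsfcnp}, Corollary~\ref{corollarysfcnp} and Theorem~\ref{pitheorem}, and your argument is exactly such a combination — simplicity from minimality plus topological principality, the compact case via the equivalence of (iii) and (iv) in Theorem~\ref{pitheorem} together with faithfulness of traces on simple algebras, and the non-compact case via the reduction to $H=KGK$ already used in Corollary~\ref{corollarysfcnp}. The verifications you flag (descent of minimality and topological principality to $KGK$, invariance of $S(G)$ and of pure infiniteness under the resulting stable isomorphism) are indeed the only details to check, and all go through as you indicate.
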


\begin{remark}
Since, by Remark~\ref{rmk:BL matchup}, our type semigroup coincides with that of
B\"onicke and Li \cite[Definition~5.3]{BL}, our Theorems
\ref{pitheorem}~and~\ref{dichotomy} recover \cite[Theorem~5.11]{BL}, and extend it to
groupoids with non-compact unit spaces. In particular, our Theorem~\ref{dichotomy}
extends \cite[Corollary~5.13]{BL} to groupoids with not-necessarily-compact unit spaces.
\end{remark}

\section{Crossed products and \texorpdfstring{$k$}{k}-graphs}\label{sec:reconcile}

In this section we reconcile our semigroup $S(G)$ with two previous constructions
appearing in the literature.

Firstly, if $X$ is a compact totally disconnected Hausdorff space carrying an action of a
discrete group $\Gamma$, then we can form the transofmation groupoid. The reduced
$\cstar$-algebra of the transformation groupoid $G$ coincides with the reduced crossed
product of $C(X)$ by $\Gamma$, so we would expect our semigroup $S(G)$ to coincide with
the type semigroup of the induced action of $\Gamma$ on $C(X)$ defined and studied in
\cite{RS, Ra2}. We prove that this is the case in Proposition~\ref{generalization} (this
is also discussed in \cite[Remark 5.4.]{BL}); we will use this in Section~\ref{sec:orbit
equiv} to see that the type semigroup of the action of $\Gamma$ on $X$ is an
orbit-equivalence-invariant.

Secondly, if $\Lambda$ is a row-finite $k$-graph with no sources, its $\cstar$-algebra
can be realised as a crossed product of an AF algebra by $\mathbb{Z}^k$. The type
semigroup $S(\Lambda)$ of this action can be computed directly in terms of the adjacency
matrices of the $k$-graph \cite{PSS}. Since $k$-graph $\cstar$-algebras can also be
modelled as the $\cstar$-algebras of ample groupoids, it is natural to compare the
semigroup $S(\Lambda)$ with the type semigroup of the associated groupoid. We prove in
Proposition~\ref{prp:k-graph S} that the two are isomorphic.

\begin{proposition}\label{generalization}
Let $\Gamma\curvearrowright X$ be an action of a discrete group on a compact and totally
disconnected space $X$, and let $G=\Gamma\rtimes X$ denote the resulting ample, \'{e}tale
transformation groupoid. Then $S(G)$ and $S(X,\Gamma)$ (as constructed in~\cite{RS}) are
isomorphic as preordered abelian monoids.
\end{proposition}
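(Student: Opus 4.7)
The plan is to prove that the equivalence relations used to build $S(G)$ and $S(X,\Gamma)$ on the common underlying monoid $C(X,\mathbb{Z})^+$ actually coincide. Once the two relations are shown to be the same, the identity map on $C(X,\mathbb{Z})^+$ descends to a monoid isomorphism $S(G) \to S(X,\Gamma)$, and since both carry the algebraic preorder induced from $C(X,\mathbb{Z})^+$, the preorders automatically agree as well.

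The structural crux is a canonical decomposition of compact open bisections in $G = X \rtimes \Gamma$. Since $\Gamma$ is discrete, the product topology on $G$ has a basis of ``elementary'' sets $\{t\} \times U$ with $t \in \Gamma$ and $U \subseteq X$ clopen. A short compactness argument shows that any compact open $E \subseteq G$ meets only finitely many slices $\{t\} \times X$; labeling these by $t_1,\dots,t_m$ and setting $A_j := \{x \in X : (t_j,x) \in E\}$, we obtain a disjoint decomposition $E = \bigsqcup_{j=1}^m \{t_j\} \times A_j$ with each $A_j$ compact open in $X$. Because $E$ is a bisection and $s(t,x) = x$, the sets $A_j$ must be pairwise disjoint; similarly the $t_j \cdot A_j$ are pairwise disjoint. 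In particular,
\[
\chi_{s(E)} = \sum_{j=1}^m \chi_{A_j}, \qquad \chi_{r(E)} = \sum_{j=1}^m \chi_{t_j \cdot A_j}.
\]

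Using this decomposition, the implication $f \sim_G g \Rightarrow f \sim_{X,\Gamma} g$ follows by refining each bisection $E_i$ in a witness for $f \sim_G g$ into elementary pieces $\{t_{ij}\} \times A_{ij}$, yielding
\[
f = \sum_{i,j} \chi_{A_{ij}}, \qquad g = \sum_{i,j} \chi_{t_{ij} \cdot A_{ij}},
\]
which is exactly a Rørdam--Sierakowski witness. Conversely, given $f = \sum_i \chi_{A_i}$ and $g = \sum_i \chi_{t_i \cdot A_i}$ realizing $f \sim_{X,\Gamma} g$, the sets $E_i := \{t_i\} \times A_i$ are compact open bisections in $G$ with $s(E_i) = A_i$ and $r(E_i) = t_i \cdot A_i$, so $f \sim_G g$. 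This identification of equivalence relations yields the desired isomorphism of preordered abelian monoids. The only real work is the disjoint-union decomposition of a compact open bisection, which is routine but is the conceptual bridge between groupoid bisections and single group elements; everything else is bookkeeping.
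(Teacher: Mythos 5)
Your proposal is correct and follows essentially the same route as the paper: both arguments hinge on decomposing a compact open bisection of $X\rtimes\Gamma$ into finitely many elementary slices $\{t_j\}\times A_j$ with the $A_j$ (and the $t_j\cdot A_j$) pairwise disjoint, and then translating witnesses for the two equivalence relations back and forth. The only cosmetic difference is that the paper first invokes Proposition~4.4 of \cite{Ra2} to replace $S(X,\Gamma)$ by the semigroup $S(C(X),\Gamma)$ defined directly on $C(X,\mathbb{Z})^+$, a step you absorb implicitly by treating the R{\o}rdam--Sierakowski relation as already living on $C(X,\mathbb{Z})^+$ rather than on clopen subsets of $X\times\mathbb{N}$.
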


\begin{proof}
Proposition 4.4 in~\cite{Ra2} shows that $S(X,\Gamma)\cong S(C(X),\Gamma)$, so it
suffices to show that $S(G)\cong S(C(X),\Gamma)$. In what follows we may, and will,
identify the spaces $\Gunit$ and $X$. We thus consider the identity mapping
$C(\Gunit,\mathbb{Z})^+\rightarrow C(X,\mathbb{Z})^+$ and prove that $f\sim_{G} g$ if and
only if $f\sim_{\alpha} g$ (in the sense of definition 4.1 of~\cite{Ra2}) where $f,g\in
C(X,\mathbb{Z})^+$ and $\alpha:\Gamma\curvearrowright C(X)$ is the induced action.

Let $E\subseteq G$ be a compact open bisection. Writing $\pi_\Gamma:G\rightarrow\Gamma$ and
$\pi_X:G\rightarrow X$ for the canonical projections, we set $E_X=\pi_X(E)=s(E)\subseteq X$
and $E_\Gamma=\pi_{\Gamma}(E)$. Note that for each $x\in E_X$ there is a unique $t_x$ in
$E_\Gamma$ such that $(t_x,x)\in E$. For $t\in E_\Gamma$, let
\[E_{X}(t)=\{x\in E_X\ |\ t_x=t\}=s(E\cap\pi_{\Gamma}^{-1}(\{t\})).\]
Clearly $E_X(t)$ is compact and open and since $E=\bigsqcup_{t\in E_{\Gamma}}(E\cap
\pi_{\Gamma}^{-1}(\{t\})$ and $s$ is bijective on $E$ we can see that $\bigsqcup_{t\in
E_{\Gamma}}E_X(t)=E_X$. By compactness there are finitely many $t_j\in E_\Gamma$, $1\leq
j\leq m$ such that \[\bigsqcup_{j=1}^{m}E_X(t_j)=E_X=s(E).\] By construction it also
follows that
\[\bigsqcup_{j=1}^{m}t_j.E_X(t_j)=r(E).\]
Setting $E_j=E_X(t_j)$ for $j=1,\dots,m$ we get
\[\ch_{s(E)}=\sum_{j=1}^{m}\ch_{E_j},\quad\text{and}\quad \ch_{r(E)}=\sum_{j=1}^{m}\ch_{t_j.E_j}.\]

Now suppose $f,g\in C(\Gunit,\mathbb{Z})^+$ with $f\sim_G g$. Then there are compact open
bisections $E_1,\dots, E_n$ with
\[f=\sum_{i=1}^{n}\mathds{1}_{s(E_i)},\quad \text{and}\quad g=\sum_{i=1}^{n}\mathds{1}_{r(E_i)}.\]
By our work above, for each $i=1,\dots, n$ we can find an $m_i$, group elements
$t_{i,1},\dots, t_{i,m_i}$, and compact open subsets $E_{i,1},\dots,E_{i,m_{i}}\subseteq
s(E_i)$ with
\[\ch_{s(E_i)}=\sum_{j=1}^{m_i}\ch_{E_{i,j}}\quad \text{and}\quad \ch_{r(E_i)}=\sum_{j=1}^{m_i}\ch_{t_{i,j}E_{i,j}}.\]
It then follows that $f\sim_{\alpha}g$ because
\[f=\sum_{i=1}^{n}\sum_{j=1}^{m_i}\ch_{E_{i,j}}\quad \text{and}\quad g=\sum_{i=1}^{n}\sum_{j=1}^{m_i}\ch_{t_{i,j}E_{i,j}}.\]

Conversely, suppose $f,g\in C(X,\mathbb{Z})^+$ with $f\sim_\alpha g$. Then there are
compact open subsets $A_1,\dots, A_n\subseteq X$ and group elements $t_1,\dots,
t_n\in\Gamma$ such that $f=\sum_{i=1}^{n}\ch_{A_i}$ and $g=\sum_{i=1}^{n}\ch_{t_i.A_i}$.
Simply set $E_i=\{t_i\}\times A_i$, which are clearly compact open bisections in $G$.
Then $s(E_i)=A_i$ and $r(E_i)=t_i.E_i$
\[f=\sum_{i=1}^{n}\ch_{A_i}=\sum_{i=1}^{n}\ch_{s(E_i)},\quad\text{and}\quad g=\sum_{i=1}^{n}\ch_{t_i.A_i}=\sum_{i=1}^{n}\ch_{r(E_i)}\]
which means $f\sim_G g$.
\end{proof}

We now reconcile our type semigroup with the semigroup of a $k$-graph constructed in
\cite{PSS}. Given a row-finite $k$-graph $\Lambda$ with no sources, the associated
semigroup $S(\Lambda)$ is defined \cite[Definition~3.5]{PSS} as follows. For $n \in
\mathbb{N}^k$, we write $A_\Lambda^n$ for the $\Lambda^0 \times \Lambda^0$ integer matrix
with entries $A_\Lambda^n(v,w) = |v\Lambda^n w|$. The semigroup $S(\Lambda)$ is defined
to be the quotient of $\mathbb{N}\Lambda^0$ by the equivalence relation $\approx$ defined
as follows: we first write $x \sim y$ if there exist $p,q \in \mathbb{N}^k$ such that
$(A^p_\Lambda)^t x = (A^q_\Lambda)^t y$; and then $x \approx y$ if there exist finitely
many pairs $(x_i, y_i)$ in $\mathbb{N}\Lambda^0$ such that
\[
\sum_i x_i \sim x,\quad \sum_i y_i \sim y\quad\text{ and }\quad x_i \sim y_i\text{ for all $i$.}
\]

In \cite[Lemma~3.7]{PSS} the semigroup $S(\Lambda)$ is related to the type semigroup of
an associated $\cstar$-dynamical system: each twisted $\cstar$-algebra $\cstar(\Lambda,
c)$ of $\Lambda$ is realised, up to stable isomorphism, as a crossed product of an AF
algebra, and \cite[Lemma~3.7]{PSS} shows that the type semigroup for this dynamical
system is isomorphic to $S(\Lambda)$.

Kumjian and Pask \cite{KP} showed that every $k$-graph has an associated infinite-path
groupoid $G_\Lambda$ such that $\cstar(\Lambda) \cong \cstar(G_\Lambda)$. Here we prove
that $S(\Lambda)$ agrees with the semigroup $S(G_\Lambda)$ if the infinite-path groupoid
of $\Lambda$.

We need to briefly recall the notion of a $k$-graph and the definition of $G_\Lambda$.
The following is all taken from \cite{KP}. A $k$-graph is a countable category $\Lambda$
endowed with a map $d : \Lambda \to \mathbb{N}^k$ that carries composition to addition
and has the property, called the \emph{factorisation property} that composition restricts
to a bijection from $\{(\mu,\nu) : d(\mu) = m, d(\nu) = n, s(\mu) = r(\nu)\}$ to
$d^{-1}(m+n)$. It follows that $d^{-1}(0)$ is precisely the collection of identity
morphisms. We write $\Lambda^n := d^{-1}(n)$ for $n \in \mathbb{N}^k$, so that $r,s$ can
be regarded as maps from $\Lambda$ to $\Lambda^0$. We say that $\Lambda$ is row-finite
with no sources if $v \Lambda^n$ is finite and nonempty for every $n \in \mathbb{N}^k$
and $v \in \Lambda^0$.

An \emph{infinite path} in a $k$-graph $\Lambda$ is a map $x : \{(m,n) \in \mathbb{N}^k
\times \mathbb{N}^k : m \le n\} \to \Lambda$ such that $x(m,n)x(n,p)$ is defined and
equal to $x(m,p)$ whenever $m \le n \le p$. The space $\Lambda^\infty$ of all such
infinite paths is a totally disconnected locally compact space under the topology with
basic compact open sets $Z(\lambda) := \{x : x(0,d(\lambda)) = \lambda\}$ indexed by
$\lambda \in \Lambda$. Given $x \in \Lambda^\infty$ and $n \in \mathbb{N}^k$ we write
$\sigma^n(x) \in \Lambda^\infty$ for the element such that $\sigma^n(x)(p,q) = x(n+p,
n+q)$. The maps $\sigma^n$ constitute an action of $\mathbb{N}^k$ on $\Lambda^\infty$ by
local homeomorphisms. The groupoid $G_\Lambda$ is the set
\[
G_\Lambda := \{(x,p-q,y) : x,y \in \Lambda^\infty, p,q \in \mathbb{N}^k, \sigma^p(x) = \sigma^q(y)\},
\]
with composable pairs $G_\Lambda^{(2)} = \{((x,m,y),(w,n,z)) : y = w\}$, composition
given by $(x,m,y)(y,n,z) = (x, m+n, z)$ and inverses $(x,m,y)^{-1} = (y, -m, x)$. The
unit space is $G_\Lambda^{(0)} = \{(x,0,x) : x \in \Lambda^\infty\}$, and we identify it
with $\Lambda^\infty$ without further comment. Under the topology generated by the sets
$Z(\mu,\nu) := \{(x, d(\mu) - d(\nu), y) : x(0,d(\mu)) = \mu\text{ and }x(0, d(\nu))=
\nu\}$ indexed by pairs $\mu,\nu \in \Lambda$ with $s(\mu) = s(\nu)$, the groupoid
$G_\Lambda$ is an ample \'etale amenable groupoid, and the sets $Z(\mu,\nu)$ are compact
open bisections.

\begin{proposition}\label{prp:k-graph S}
Let $\Lambda$ be a row-finite $k$-graph with no sources, and let $G_\Lambda$ be the
associated $k$-graph groupoid. Then there is an isomorphism $\tau : S(\Lambda) \cong
S(G_\Lambda)$ such that $\tau([\delta_v]) = [\ch_{Z(v)}]$ for all $v \in \Lambda^0$.
\end{proposition}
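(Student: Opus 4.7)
I would first build the map in the obvious way: define $\tau_0 : \mathbb{N}\Lambda^0 \to S(G_\Lambda)$ on generators by $\tau_0(\delta_v) := [\ch_{Z(v)}]_{G_\Lambda}$ and extend linearly. The key observation is that for any $\mu \in \Lambda$, the set $Z(\mu, s(\mu))$ is a compact open bisection of $G_\Lambda$ with source $Z(s(\mu))$ and range $Z(\mu)$, which gives the ``collapse'' identity $[\ch_{Z(\mu)}]_{G_\Lambda} = [\ch_{Z(s(\mu))}]_{G_\Lambda}$ in $S(G_\Lambda)$. Combined with the partition $Z(v) = \bigsqcup_{\mu \in v\Lambda^p} Z(\mu)$, this yields $\tau_0(\delta_v) = \tau_0((A^p_\Lambda)^t \delta_v)$ for every $v$ and $p$, and then by linearity $\tau_0(x) = \tau_0((A^p_\Lambda)^t x)$. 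Thus $\tau_0$ respects the relation $\sim$ (take the appropriate $p,q$), and then additivity of $\tau_0$ together with the description of $\approx$ as an additive closure of $\sim$ shows that $\tau_0$ descends to a monoid homomorphism $\tau : S(\Lambda) \to S(G_\Lambda)$.

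Surjectivity should be straightforward: every compact open subset of $\Lambda^\infty$ is a finite disjoint union of cylinders $Z(\mu_i)$ (by a standard refinement argument for ample groupoids, using that the $Z(\mu)$ form a basis of compact open sets). So every non-negative integer-valued compactly supported function on $\Lambda^\infty$ is of the form $f = \sum_i \ch_{Z(\mu_i)}$, and the collapse identity immediately gives $[f]_{G_\Lambda} = \tau\bigl([\sum_i \delta_{s(\mu_i)}]\bigr)$.

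Injectivity is where the real work is. Suppose $\tau([x]) = \tau([y])$; unpacking, there are compact open bisections $F_1, \dots, F_n$ with $\sum_v x_v \ch_{Z(v)} = \sum_i \ch_{s(F_i)}$ and $\sum_v y_v \ch_{Z(v)} = \sum_i \ch_{r(F_i)}$. Refine each $F_i$ into a disjoint union of basic bisections $Z(\mu_k, \nu_k)$ with $s(\mu_k) = s(\nu_k) =: w_k$ (using that the $Z(\mu,\nu)$ form a basis). Choose $N \in \mathbb{N}^k$ dominating all $d(\mu_k)$ and $d(\nu_k)$. Refine every cylinder in sight down to depth $N$ via $Z(v) = \bigsqcup_{\lambda \in v\Lambda^N} Z(\lambda)$ and $Z(\nu_k) = \bigsqcup_{\lambda' \in w_k \Lambda^{N - d(\nu_k)}} Z(\nu_k\lambda')$, and equate coefficients of $\ch_{Z(\lambda)}$ over $\lambda \in \Lambda^N$ (disjointness makes this legitimate). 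Setting $u_k := (A^{N-d(\nu_k)}_\Lambda)^t \delta_{w_k}$ and $v_k := (A^{N-d(\mu_k)}_\Lambda)^t \delta_{w_k}$, the identifications read $(A^N_\Lambda)^t x = \sum_k u_k$ and $(A^N_\Lambda)^t y = \sum_k v_k$. Now the crucial combinatorial fact is that $(A^{d(\nu_k)}_\Lambda)^t u_k = (A^N_\Lambda)^t \delta_{w_k} = (A^{d(\mu_k)}_\Lambda)^t v_k$ (using $A^{p+q}_\Lambda = A^p_\Lambda A^q_\Lambda$ from the factorisation property), so $u_k \sim v_k$ in $\mathbb{N}\Lambda^0$ for each $k$. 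By the definition of $\approx$ this gives $(A^N_\Lambda)^t x \approx (A^N_\Lambda)^t y$, and since plainly $x \sim (A^N_\Lambda)^t x$ and $y \sim (A^N_\Lambda)^t y$, we conclude $x \approx y$.

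The main obstacle is the bookkeeping in the injectivity step: correctly refining the bisections $F_i$ to basic cylinder bisections and then pushing everything to a common depth $N$ so that coefficients can be equated. Once that is done, the translation of the resulting combinatorial identity into the language of the matrices $A^n_\Lambda$ and the relation $\sim$ is essentially forced by the factorisation property. An alternative route would be to invoke \cite[Lemma~3.7]{PSS} to identify $S(\Lambda)$ with the type semigroup of an associated $\mathbb{Z}^k$-action on the spectrum of an AF core, then use Proposition~\ref{generalization} together with the groupoid-equivalence-invariance of the type semigroup (since the transformation groupoid of that action is equivalent to $G_\Lambda$), but the direct argument above seems more transparent and more compatible with the paper's framework.
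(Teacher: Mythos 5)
Your proposal is correct and takes essentially the same approach as the paper: the same collapse identity $[\ch_{Z(\mu)}]_{G_\Lambda} = [\ch_{Z(s(\mu))}]_{G_\Lambda}$ via the bisections $Z(\mu,s(\mu))$, the same cylinder decomposition for surjectivity, and an injectivity argument that refines the given bisections into basic sets $Z(\mu_k,\nu_k)$ of a common degree and matches coefficients. The only (cosmetic) difference is that you push both sides to a single common degree $N$ and extract pairs $u_k \sim v_k$ directly, whereas the paper treats the $\mu_i$- and $\nu_i$-sides with separate degrees $p$ and $q$; the underlying computation is identical.
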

\begin{proof}
There is a homomorphism $\tilde\tau : \mathbb{N}\Lambda^0 \to C_c(G_\Lambda,
\mathbb{Z})_+$ that carries $\delta_v$ to $\ch_{Z(v)}$ for all $v$. For $v \in \Lambda^0$
and $p \in \mathbb{N}^k$, we have
\[
Z(v) = \bigsqcup_{\lambda \in v\Lambda^p} Z(\lambda)
    = \bigsqcup_{\lambda \in v\Lambda^p} r(Z(\lambda, s(\lambda)))
\]
Hence
\begin{align*}
\tilde\tau(\delta_v)
    &= \ch_{Z(v)}
    = \sum_{\lambda \in v\Lambda^p} \ch_{r(Z(\lambda, s(\lambda)))}\\
    &\sim \sum_{\lambda \in v\Lambda^p} \ch_{s(Z(\lambda, s(\lambda)))}
    = \sum_{\lambda \in v\Lambda^p} \ch_{Z(s(\lambda))}
    = \sum_{\lambda \in v\Lambda^p} \tilde\tau(\delta_{s(\lambda)})
    = \tilde\tau\big((A_\Lambda^p)^t\delta_v\big).
\end{align*}
A simple calculation then shows that if $x \approx y$ in $\mathbb{N}\Lambda^0$ then
$\tilde\tau(x) \sim \tilde\tau(y)$ in $C_c(G_\Lambda, \mathbb{Z})_+$ and so $\tilde\tau$
descends to a homomorphism from $S(\Lambda)$ to $S(G_\Lambda)$. To see that this $\tau$
is surjective, it suffices to show that its range contains $[\ch_K]$ for every compact
open $K \subseteq G_\Lambda^0 = \Lambda^\infty$. To see this, fix such a compact open
$K$. Since the cylinder sets $\{Z(\lambda) : \lambda \in \Lambda\}$ are a base for the
topology on $\Lambda^\infty$, for each $x \in K$ we can find $\lambda_x$ such that $x \in
Z(\lambda_x) \subseteq K$. By compactness, we there is a finite $F \subseteq \Lambda$
such that $K = \bigcup_{\lambda \in F} Z(\lambda)$. Let $p := \bigvee_{\lambda \in F}
d(\lambda)$. Then each $Z(\lambda) = \bigsqcup_{\alpha \in s(\lambda)\Lambda^{p -
d(\lambda)}} Z(\lambda\alpha)$. Let $\overline{F} := \{\lambda\alpha : \lambda \in F,
\alpha \in s(\lambda)\Lambda^{p - d(\lambda)}\}$. Then $K = \bigcup_{\mu \in
\overline{F}} Z(\mu)$. Since the sets $\{Z(\mu) : \mu \in \Lambda^p\}$ are mutually
disjoint, we conclude that $K = \bigsqcup_{\mu \in G} Z(\mu)$. Hence
\[
[\ch_K]
    = \Big[\sum_{\mu \in \overline{F}} \ch_{r(Z(\mu, s(\mu)))}\Big]
    = \Big[\sum_{\mu \in \overline{F}} \ch_{s(Z(\mu), s(\mu))}\Big]
    = \tau\Big(\sum_{\mu \in \overline{F}} \delta_{s(\mu)}\Big).
\]

To see that $K$ is injective, we show that if $\tilde\tau(x) \sim \tilde\tau(y)$, then $x
\approx y$. Since $\tilde\tau(x) \sim \tilde\tau(y)$, then we can find compact open
bisections $E_i$ such that $\sum_v x(v)\ch_{Z(v)} = \sum_i \ch_{s(E_i)}$ and $\sum_w
y(w)\ch_{Z(w)} = \sum_i \ch_{r(E_i)}$. Recall that the map $c : (x, m, y) \mapsto m$ is a
continuous $\mathbb{Z}^k$-valued cocycle on $G_\Lambda$ and so the sets $\{c^{-1}(p) : p
\in \mathbb{Z}^k\}$ are mutually disjoint clopen sets. So by replacing each $E_i$ with
the finitely many nonempty intersections $E_i \cap c^{-1}(p)$ of which it is comprised,
we can assume that each $E_i \subseteq c^{-1}(p_i)$ for some $i$. Now an argument very
similar to the preceding paragraph shows that we can express each $E_i$ as a finite
disjoint union $E_i = \bigcup_{(\mu,\nu) \in F_i} Z(\mu,\nu)$, so we may assume that each
$E_i$ has the form $Z(\mu_i,\nu_i)$. By taking $p = \sup_i d(\mu_i)$ and writing each
$Z(\mu_i, \nu_i) = \bigsqcup_i \bigsqcup_{\alpha \in s(\mu_i)\Lambda^{p - d(\lambda_i)}}
Z(\mu_i\alpha,\nu_i\alpha)$, we can further assume that each $d(\mu_i) = p$. So the sets
$r(Z(\mu_i, \nu_i))$ and $r(Z(\mu_j, \nu_j))$ are either equal or disjoint for all pairs
$i,j$. Now for each $v \in \Lambda^0$ such that $x(v) \not= 0$ and each $\lambda \in
v\Lambda^p$, we must have $|\{i : \mu_i = \lambda\}| = x(v)$. We deduce that $\sum_i
\delta_{s(\mu_i)} = (A^p_\Lambda)^t x \approx x$.

Now let $q = \bigvee_i d(\nu_i)$. For each $i$, we have
\[
\delta_{s(\nu_i)}
    \approx (A_\Lambda^{q - d(\nu_i)})^t \delta_{s(\nu_i)}
    = \sum_{\alpha \in s(\nu_i)\Lambda^{q - d(\nu_i)}} \delta_{s(\nu_i\alpha)}.
\]
Thus
\[
(A^p_\Lambda)^t x
    \approx \sum_i \sum_{\alpha \in s(\nu_i)\Lambda^{q - d(\nu_i)}} \delta_{s(\nu_i\alpha)} =:z.
\]
As above, we have $\sum_i \sum_\alpha \ch_{Z(\nu_i\alpha)} = \tilde\tau(y)$, and since
each $d(\nu_i\alpha) = q$ the sets $Z(\nu_i\alpha)$ are mutually disjoint. So for each $w
\in \Lambda^0$ with $y(w) \not= 0$ and each $\eta \in w\Lambda^q$, we have $|\{(i,\alpha)
: \nu_i\alpha= \eta\}| = y(w)$. Hence
\[
z = (A_\lambda^q)^t y \approx y.
\]
Since $\approx$ is transitive, we conclude that $x \approx y$. So $\tau$ is injective.
\end{proof}

\section{Applications}\label{sec:applications}

In this section we present some applications of our results above.

\subsection{Dynamical systems and continuous orbit equivalence}\label{sec:orbit equiv}

We first apply our results to the theory of rigidity of dynamical systems. Recall the notion of continuous orbit equivalence studied by Giordano, Putnam, and Skau in~\cite{GPS}, and by X.\ Li in~\cite{Li}.

Let $(X,\Gamma)$ and $(Y,\Lambda)$ be discrete transformation groups and write $X\rtimes\Gamma$ and $Y\rtimes\Lambda$ for the resulting \'{e}tale transformation groupoids. The actions $\Gamma\curvearrowright X$ and $\Lambda\curvearrowright Y$ are said to be \emph{continuous orbit equivalent} (COE) if there are
\begin{enumerate}
\item[(i)] a homeomorphism $\varphi:X\rightarrow Y$ (with inverse $\psi:Y\rightarrow
    X$),
\item[(ii)] continuous maps $a:\Gamma\times X\rightarrow\Lambda$, and  $b:\Lambda\times Y\rightarrow \Gamma$
\end{enumerate}
such that for all $x\in X$, $t\in\Gamma$, $y\in Y$, $s\in\Lambda$
\[\varphi(t.x)=a(t,x)\varphi(x),\quad\text{and}\quad\psi(s.y)=b(s,y)\psi(y).\]
X.\ Li proves the following rigidity property for free systems.

\begin{theorem}[\cite{Li}]\label{Li}
Let $(X,\Gamma)$ and $(Y,\Lambda)$ be topologically free dynamical systems. The following
are equivalent.
\begin{enumerate}
\item[(i)] $(X,\Gamma)$ and $(Y,\Lambda)$ are continuous orbit equivalent.
\item[(ii)] The transformation groupoids $X\rtimes\Gamma$ and $Y\rtimes\Lambda$ are
    topologically isomorphic.
\item[(iii)] There is a \cstar-isomorphism $\Phi:C_0(X)\rtimes\Gamma\rightarrow
    C_0(Y)\rtimes\Lambda$ with $\Phi(C_0(X))=C_0(Y)$.
\end{enumerate}
\end{theorem}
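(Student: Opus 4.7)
The plan is to prove the cycle (i)$\Rightarrow$(ii)$\Rightarrow$(iii)$\Rightarrow$(i), treating the first two implications as essentially bookkeeping and concentrating the real work on the reconstruction direction (iii)$\Rightarrow$(ii). The hypothesis of topological freeness will be invoked in exactly two spots: once to make the cocycle in (i)$\Rightarrow$(ii) well-defined and continuous, and once (as topological principality of the transformation groupoid) to recover the groupoid from the $\cstar$-algebra with its diagonal.

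For (i)$\Rightarrow$(ii), starting from $(\varphi,\psi,a,b)$ witnessing COE, I would define $\Phi:X\rtimes\Gamma\to Y\rtimes\Lambda$ by $\Phi(t,x)=(a(t,x),\varphi(x))$. The first task is to verify the cocycle identity $a(ts,x)=a(t,s.x)a(s,x)$: both sides satisfy $\varphi((ts).x)=[\text{side}]\varphi(x)$, and topological freeness of $\Lambda\curvearrowright Y$ forces them to agree pointwise on a dense set, hence everywhere by continuity. With the cocycle identity in hand, $\Phi$ is a continuous groupoid homomorphism; symmetrically, the data $(\psi,b)$ produce its continuous inverse, so $\Phi$ is a topological isomorphism. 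The implication (ii)$\Rightarrow$(iii) is immediate: functoriality of $\cstar_r$ (together with the identification $\cstar_r(X\rtimes\Gamma)\cong C_0(X)\rtimes_r\Gamma$ noted in Section~\ref{sec:background}) gives the required $\Phi$, and the unit space of a transformation groupoid maps to the unit space under any groupoid isomorphism, so the diagonal subalgebras are preserved.

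The substantive implication is (iii)$\Rightarrow$(i), and my plan is to extract the COE data from the normalizer structure of $C_0(X)$ inside $C_0(X)\rtimes_r\Gamma$. First, a $\cstar$-isomorphism $\Phi$ carrying $C_0(X)$ onto $C_0(Y)$ restricts to a $*$-isomorphism between the diagonals, which by Gelfand duality produces a homeomorphism $\varphi:X\to Y$ with inverse $\psi$. Next, for each $t\in\Gamma$ the implementing unitary $u_t$ is a normalizer of $C_0(X)$ whose associated partial homeomorphism of $X$ is translation by $t$; so $\Phi(u_t)$ is a normalizer of $C_0(Y)$ and defines, by the corresponding partial homeomorphism of $Y$, a map that must coincide with some element of $\Lambda$ locally near each point (here is where topological principality of $Y\rtimes\Lambda$ is essential: it guarantees that every normalizer of $C_0(Y)$ decomposes as a finite or locally finite sum of terms of the form $f\cdot v_s$, so the partial homeomorphism implemented by $\Phi(u_t)$ is locally given by a group element). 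Using a compactness/partition-of-unity argument on a compact neighborhood basis of $X$, this local information packages into a continuous map $a:\Gamma\times X\to\Lambda$ satisfying $\varphi(t.x)=a(t,x)\varphi(x)$. The symmetric construction from $\Phi^{-1}$ yields $b$, and the pair $(a,b)$ witnesses the COE.

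The hard part is the normalizer decomposition used in (iii)$\Rightarrow$(ii); this is where I would lean on Renault's reconstruction theorem for Cartan pairs (or, equivalently, on the uniqueness of the Weyl groupoid of a topologically principal ample groupoid). Once that machinery is invoked, the groupoid isomorphism (ii) drops out directly, and the extraction of $a,b$ becomes a matter of tracking source/range data along bisections. I expect the only subtlety beyond citing Renault to be the verification that $a$ is \emph{globally} continuous and not merely locally constant: topological freeness of $\Gamma\curvearrowright X$ is used one final time here, to ensure that any two candidate values $a(t,x),a'(t,x)$ must agree because otherwise $a(t,x)^{-1}a'(t,x)$ would fix $\varphi(x)$ on a nonmeager set.
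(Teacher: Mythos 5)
This theorem is imported from \cite{Li}; the paper does not prove it, and the only in-text argument is the short remark following the statement, which sketches (i)$\Rightarrow$(ii) and notes that topological freeness makes the two maps mutual inverses. Measured against that remark (and Li's published proof), your outline is correct and essentially the standard one. Your (i)$\Rightarrow$(ii) is in fact more careful than the paper's remark: the cocycle identity $a(t't,x)=a(t',t.x)a(t,x)$ is precisely what makes $(t,x)\mapsto(a(t,x),\varphi(x))$ multiplicative, and your mechanism -- the discrepancy is a group element fixing $\varphi(x)$, the points with trivial isotropy are dense (countable group acting topologically freely on a Baire space, which the standing second-countability assumptions guarantee), and two continuous maps into a discrete space agreeing on a dense set agree everywhere -- is the right one. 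The same mechanism applied to the $\Gamma$-action on $X$ gives $b(a(t,x),\varphi(x))=t$, which is the ``mutual inverses'' step the paper attributes to topological freeness. So freeness of \emph{both} actions is consumed inside (i)$\Rightarrow$(ii), and your opening claim that the hypothesis is invoked ``in exactly two spots'' undercounts slightly. (ii)$\Rightarrow$(iii) is immediate, as you say.

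For (iii)$\Rightarrow$(i), your route -- Renault's reconstruction of a topologically principal \'etale groupoid from the Cartan pair, applied to $(C_0(X)\rtimes_r\Gamma, C_0(X))$ (a Cartan pair precisely because the action is topologically free), followed by reading off $(a,b)$ from the resulting groupoid isomorphism -- is exactly how this implication is proved in \cite{Li}. Two caveats. First, the normalizer decomposition you gesture at \emph{is} the content of Renault's theorem, not an independent argument, so the only substantive implication is being outsourced to a black box; that is acceptable for a theorem the paper itself merely cites, but you should be explicit that this is a reduction rather than a proof. Second, once a topological isomorphism $\rho:X\rtimes\Gamma\to Y\rtimes\Lambda$ is in hand, no further appeal to freeness is needed to extract the orbit-equivalence data: $a(t,x)$ is the $\Lambda$-coordinate of $\rho(t,x)$, which is automatically continuous because $\Lambda$ is discrete and $\rho$ is continuous, is unique by definition, and satisfies $\varphi(t.x)=a(t,x).\varphi(x)$ by applying $\rho$ to range and source. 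Your closing worry about reconciling two candidate values of $a(t,x)$ therefore does not arise.
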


The proof of $(i)\Rightarrow(ii)$ relies on topological freeness. Indeed, following the notation in the definition above of COE the maps
\begin{align*}
X\rtimes\Gamma\rightarrow &Y\rtimes\Lambda,\quad (t,x)\mapsto(a(t,x),\varphi(x))\\
Y\rtimes\Lambda\rightarrow &X\rtimes\Gamma,\quad (s,y)\mapsto(b(s,y),\psi(y))
\end{align*}
are easily seen to be topological groupoid homomorphisms. Topological freeness guarantees that these maps are mutual inverses.

From this result and our work above we can show that the type semigroup is a continuous orbit equivalence invariant under the assumption of freeness. Recall that for a Cantor system $(X,\Gamma)$, we write $S(X,\Gamma)$ for the type semigroup as defined in~\cite{RS}.

\begin{theorem}
Let $X$ and $Y$ be totally disconnected spaces and suppose $\Gamma\curvearrowright X$ and
$\Lambda\curvearrowright Y$ are topologically free dynamical systems. If $(X,\Gamma)$ and
$(Y,\Lambda)$ are continuous orbit equivalent then $S(X,\Gamma)\cong S(Y,\Lambda)$ as
abelian monoids.
\end{theorem}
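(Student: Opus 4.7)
The plan is to chain together three previously established results: Li's rigidity theorem (Theorem~\ref{Li}), the isomorphism invariance of the type semigroup (Proposition~\ref{isomorphisminvariant}), and the identification of $S(G)$ with $S(X,\Gamma)$ for transformation groupoids (Proposition~\ref{generalization}). Since $X$ and $Y$ are totally disconnected and $\Gamma,\Lambda$ are discrete, both transformation groupoids $X\rtimes\Gamma$ and $Y\rtimes\Lambda$ are \'{e}tale and ample, so all three results apply.

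First, I would invoke the implication $(i)\Rightarrow(ii)$ of Theorem~\ref{Li}: continuous orbit equivalence of the topologically free systems $(X,\Gamma)$ and $(Y,\Lambda)$ yields a topological groupoid isomorphism $\varphi : X\rtimes\Gamma \to Y\rtimes\Lambda$. Next, Proposition~\ref{isomorphisminvariant} applies to $\varphi$ and gives an isomorphism of pre-ordered abelian monoids
\[
S(X\rtimes\Gamma) \cong S(Y\rtimes\Lambda).
\]
Finally, Proposition~\ref{generalization} identifies each side canonically with the action-theoretic type semigroup; namely $S(X\rtimes\Gamma)\cong S(X,\Gamma)$ and $S(Y\rtimes\Lambda)\cong S(Y,\Lambda)$ as preordered abelian monoids. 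Composing these three isomorphisms yields the claimed isomorphism $S(X,\Gamma)\cong S(Y,\Lambda)$.

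There is no serious obstacle here; the work has already been done in the cited results. The only point requiring a moment's thought is that topological freeness is genuinely needed to invoke Theorem~\ref{Li}, since Li's argument uses it to ensure that the candidate maps built from the COE cocycles $a$ and $b$ are actually mutually inverse groupoid homomorphisms rather than merely continuous COE data; once that hypothesis is in place, everything is formal.
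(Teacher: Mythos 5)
Your proposal is correct and follows exactly the paper's own argument: Theorem~\ref{Li} gives the topological groupoid isomorphism $X\rtimes\Gamma\cong Y\rtimes\Lambda$, Proposition~\ref{isomorphisminvariant} transfers this to an isomorphism of type semigroups, and Proposition~\ref{generalization} identifies these with $S(X,\Gamma)$ and $S(Y,\Lambda)$. Your remark about the role of topological freeness in Li's theorem is also consistent with the paper's discussion.
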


\begin{proof}
Theorem~\ref{Li} says that $X\rtimes\Gamma\cong Y\rtimes\Lambda$ are topologically
isomorphic groupoids. Since the underlying spaces are totally disconnected these
groupoids are ample. Now apply Proposition~\ref{isomorphisminvariant} to get
$S(X\rtimes\Gamma)\cong S(Y\rtimes\Lambda)$. Finally, appealing to
Proposition~\ref{generalization} gives us the desired result.
\end{proof}

\subsection{\texorpdfstring{$n$}{n}-filling groupoids and pure infiniteness}

Due to the deep classification achievements of Kirchberg~\cite{Kir} and
Phillips~\cite{Phi}, several authors have sought to express dynamical conditions for
systems $(A,\Gamma)$ that yield purely infinite crossed products $A\rtimes_{r}\Gamma$.
For example, a continuous action $\Gamma\curvearrowright X$ of a discrete group on a
locally compact Hausdorff space is called a \emph{local boundary action} if for every
non-empty open $U\subseteq X$ there is an open $V\subseteq U$ and $t\in\Gamma$ such that
$t.\overline{V}\varsubsetneqq V$. Laca and Spielberg showed in~\cite{LS} that if
$\Gamma\curvearrowright X$ is a topologically free local boundary action  the reduced
crossed product $C_0(X)\rtimes_{r}\Gamma$ is purely infinite and simple. Jolissaint and
Robertson~\cite{JR} generalized this notion and defined an \emph{$n$-filling} property
which in the commutative setting is equivalent to the property:  for every collection
$U_1,\dots,U_n\subseteq X$ of non-empty open sets we can find group elements
$t_1,\dots,t_n$ in $\Gamma$ such that $\bigcup_{i=1}^{n}t_j.U_j=X$. This $n$-filling
property was subsequently generalised to groupoids by Suzuki \cite{Suzuki}, who proved
that $n$-filling topologically principal groupoids have purely infinite reduced
$C^*$-algebras. We show here how to recover Suzuki's pure-infiniteness result from our
notion of paradoxicality.

The following definition looks slightly different from that given by Suzuki in
\cite[Definition~3.2]{Suzuki}, but we show below that they are equivalent in
Remark~\ref{rmk:filling equiv}

\begin{definition}
Let $G$ be an \'{e}tale and ample groupoid, and let $n\in\mathbb{N}$. We say that $G$ has
the $n$-filling property if for every collection $U_1,\dots,U_n\subseteq\Gunit$ of
non-empty open sets we can find compact open bisections $E_1,\dots, E_n$ such that
\[
    \bigcup_{i=1}^{n}r(E_jU_j)=\Gunit.
\]
\end{definition}

\begin{proposition} \label{prp:filling->pi}
Let $G$ be an \'{e}tale and ample groupoid with compact unit space $\Gunit$ void of
isolated points. If $G$ is $n$-filling, then $G$ is minimal and every non-empty compact
open $A\subseteq\Gunit$ is properly paradoxical.

If, moreover, $G$ is topologically principal, then $\cstar_r(G)$ is purely infinite.
\end{proposition}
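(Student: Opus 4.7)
The overall plan is to prove the three assertions in turn and then appeal to Theorem~\ref{pitheorem} to conclude pure infiniteness. Minimality is essentially immediate: given any non-empty open $U\subseteq\Gunit$, I would apply the $n$-filling property to the constant tuple $U_1=\cdots=U_n=U$ to obtain compact open bisections $E_1,\ldots,E_n$ with $\bigcup_{j=1}^n r(E_jU)=\Gunit$, and since $r(E_jU)=U_{E_j}$, Proposition~\ref{min1}(ii) is verified.

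Now fix a non-empty compact open $A\subseteq\Gunit$ and aim to show that it is $(2,1)$-paradoxical. Because $\Gunit$ is totally disconnected and has no isolated points, a routine induction (iteratively splitting a single piece using compact open subsets) shows that $A$ admits a partition into $2n$ non-empty compact open subsets $A_1,\ldots,A_{2n}$. Apply the $n$-filling property twice, once to $(A_1,\ldots,A_n)$ and once to $(A_{n+1},\ldots,A_{2n})$, to obtain compact open bisections $E_1,\ldots,E_n$ and $E'_1,\ldots,E'_n$ satisfying $\bigcup_{j=1}^n r(E_jA_j)=\Gunit=\bigcup_{j=1}^n r(E'_jA_{n+j})$. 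Set $F_j:=E_jA_j$ and $F'_j:=E'_jA_{n+j}$; these are compact open bisections with $s(F_j)\subseteq A_j$, $s(F'_j)\subseteq A_{n+j}$, and whose ranges together cover $\Gunit$ twice. Passing to the inverse bisections $F_j^{-1},(F'_j)^{-1}$ swaps source and range and yields
\begin{align*}
\sum_{j=1}^n \ch_{s(F_j^{-1})}+\sum_{j=1}^n \ch_{s((F'_j)^{-1})}&\geq 2\ch_{\Gunit}\geq 2\ch_A,\\
\sum_{j=1}^n \ch_{r(F_j^{-1})}+\sum_{j=1}^n \ch_{r((F'_j)^{-1})}&\leq \sum_{j=1}^{2n}\ch_{A_j}=\ch_A,
\end{align*}
where the last equality uses disjointness of the $A_j$. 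This is exactly a $(2,1)$-paradoxical decomposition of $A$, so $A$ is properly paradoxical.

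Finally, assume additionally that $G$ is topologically principal. Then $G$ is \'{e}tale, ample, minimal, and topologically principal, and by the preceding paragraph condition~(ii) of Theorem~\ref{pitheorem} is satisfied. The implication $(ii)\Rightarrow(iii)$ of that theorem requires no almost-unperforation hypothesis, so $\cstar_r(G)$ is purely infinite. I do not foresee a substantial obstacle; the crux is the inversion step in the proof of proper paradoxicality, which is just bookkeeping once the partition is in place, and the no-isolated-points hypothesis enters only to ensure that $A$ can be partitioned into $2n$ non-empty compact open pieces.
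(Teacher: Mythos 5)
Your proposal is correct and follows essentially the same route as the paper's proof: split $A$ into $2n$ disjoint non-empty clopen pieces, apply the $n$-filling property to each half, and invert the bisections $E_jA_j$ to produce the $(2,1)$-paradoxical decomposition, then conclude via Theorem~\ref{pitheorem}(ii)$\Rightarrow$(iii). The only cosmetic differences are that the paper cites Proposition~\ref{min2} rather than Proposition~\ref{min1} for minimality and only requires the $2n$ clopen sets to be disjoint subsets of $A$ rather than a full partition.
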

\begin{proof}
Minimality is clear by Proposition~\ref{min2}. Suppose $A\subseteq\Gunit$ is a non-empty
compact open subset. Since $\Gunit$ is totally disconnected without isolated points we
can find $2n$ disjoint clopen sets $U_1,\dots, U_{2n}\subseteq A$. By the filling property
we can find compact open bisections $E_1,\dots, E_{2n}$ such that
\[\bigcup_{j=1}^{n}r(E_jU_j)=\Gunit,\quad\text{and}\quad \bigcup_{j=n+1}^{2n}r(E_jU_j)=\Gunit.\]
Consider the compact open bisections $F_j=(E_jU_j)^{-1}$ for $j=1,\dots, 2n$. We note
that
\[\sum_{j=1}^{2n}\ch_{s(F_j)}=\sum_{j=1}^{2n}\ch_{r(E_jU_j)}\geq\ch_{\bigcup_{j=1}^{n}r(E_jU_j)}+\ch_{\bigcup_{j=n+1}^{2n}r(E_jU_j)}\geq\ch_{\Gunit}+\ch_{\Gunit}=2\ch_{\Gunit}.\]
Since $r(F_j)=s(E_jU_j)\subseteq U_j$ for all $j$, and these are disjoint, we have
\[\sum_{j=1}^{2n}\ch_{r(F_j)}\leq \sum_{j=1}^{2n}\ch_{U_j}=\ch_{\bigcup_{j=1}^{2n}U_j}\leq\ch_{A},\]
thus $A$ is properly paradoxical.

The final assertion follows from Theorem~\ref{pitheorem}.
\end{proof}

It would be reasonable to suspect that this notion of filling is related to the locally
contracting property of C.\ Anantharaman-Delaroche found in~\cite{AD}.

\begin{remark}\label{rmk:filling equiv}
To see how that our Proposition~\ref{prp:filling->pi} recovers the pure-infiniteness
statement in \cite[Proposition~3.9]{Suzuki}, we show that our notion of $n$-filling is
equivalent to Suzuki's notion of $n$-filling. Suzuki defines $G$ to be $n$-filling if for
every nonempty open $W$ there are open bisections $E_1, \dots, E_n$ such that $\bigcup_i
r(E_i W) = \Gunit$ (see \cite[Definition~3.2]{Suzuki}). If $G$ is $n$-filling in our
sense, then it is also $n$-filling in Suzuki's sense: given $W$, apply our definition
with $U_1 = U_2 = \dots = U_n = W$. The argument for the converse is essentially that of
\cite{JR}. If $G$ is $n$-filling in Suzuki's sense, then Proposition~\ref{min2} (or
\cite[Proposition~3.9]{Suzuki} and \cite[Proposition~5.7]{BCFS}) shows that $G$ is
minimal. So we can find $\gamma_{1,2} \in U_1 G U_2$ and then a compact open bisection
$F_{1,2} \subseteq U_1 G U_2$ containing $\gamma_{1,2}$. We then repeat this argument
with $U_1$ replaced by $r(F_{1,2})$ and $U_2$ replaced by $U_3$ to find a compact open
bisection $F_{1,3} \in r(F_{1,2}) G U_3$. Iteratively, we obtain compact open bisections
$F_{1,i} \subseteq \bigcap_{j < i} r(F_{1,j}) G U_i$. Apply Suzuki's $n$-filling property
with $W = \bigcap^n_{i=1} r(F_{1,i})$ to obtain compact open bisections $F'_i$ with
$\bigcup_i r(F'_i W) = \Gunit$. Now putting $E_i = F'_iF_{1,i}$, we obtain $\bigcup_i
r(E_iU_i) = \bigcup_i r(F'_i r(F_{1,i})) \supseteq \bigcup_i r(F'_i W) = \Gunit$.
\end{remark}

\subsection{Zero-dimensional topological graphs}

Topological graphs were defined and studied by Katsura in~\cite{Kat}. In this section we confine our focus to the zero-dimensional case and show that for a totally disconnected graph $E$ there is a natural semigroup $S(E)$ associated to $E$ which agrees with the groupoid type semigroup $S(G_E)$, where $G_E $ is the infinite-path groupoid as defined by Yeend \cite{Yeend} (though we will use the more familiar equivalent description given in \cite{RSWY}).  We use this to relate Theorem~\ref{theoremsfcnp} to Brown's theorem for crossed products of AF algebras.

Recall that a topological graph is a quadruple $E = (E^0, E^1, r, s)$ where $E^0$ and $E^1$ are locally compact Hausdorff spaces, $r : E^1 \to E^0$ (the range map) is continuous, and $s : E^1\to E^0$ (the source map) is a local homeomorphism.  For any $n=1,\dots,\infty$ we have the path spaces
\[E^n=\big\{(\lambda_{k})_{k=1}^{n}\ |\ \lambda_k\in E^1, s(\lambda_k)=r(\lambda_{k+1})\big\}\subseteq\prod_{k=1}^{n}E^1,\qquad E^*=\bigsqcup_{n\geq 0}E^n\]
endowed with the obvious topologies.  For any $n\in\mathbb{N}$ the range and source maps can be naturally extended (with the same properties) to $r, s: E^n\to E^0$. Moreover, the range map extends to infinite paths $r: E^\infty\to E^0$ via $r(\lambda_k)_{k=1}^{\infty}=r(\lambda_1)$. A subset $U \subseteq E^n$ for which $s|_U:U\to s(U)$ is a homeomorphism is called an $s$-section. Note that for any $n\in\mathbb{N}$ and  vertex $v\in E^0$, $s^{-1}(v)\subseteq E^{n}$ is discrete.

A topological graph $E$ is said to be totally disconnected if $E^0$ is totally disconnected. For the remainder of this section we restrict our attention to totally disconnected graphs whose range map is proper and surjective. In this setting the infinite path space $E^\infty$ is also totally disconnected. Here is a brief justification. For $n\in\mathbb{N}$ and $U \subseteq E^n$, we have the cylinder sets
\[Z(U) := \big\{\lambda\in E^{\infty}\ |\ \lambda=\alpha\beta, \alpha\in U, \beta\in E^{\infty}, s(\alpha)=r(\beta)  \big\} \subseteq E^\infty.\]
If $U\subseteq E^n$ is compact and open, one verifies that $Z(U)$ is compact open too, and a routine argument shows that the collection
\[\big\{Z(U)\ |\ U\subseteq E^n\ \text{ is a compact open $s$-section for some}\  n\geq 0 \big\}\] forms a basis for the topology on $E^\infty$. One can also show that every compact open $A\subseteq E^\infty$ can be written as a finite disjoint union $A=\sqcup_j Z(A_j)$ where the $A_j\subseteq E^{p_i}$ are compact open $s$-sections. These facts will prove useful in our work below.

The topological-graph \cstar-algebra $\cstar(E)$ is defined to be the Cuntz-Pimsner
algebra of a \cstar-correspondence constructed from the topological graph $E$. However,
it can also be realized as the reduced \cstar-algebra of a Deaconu-Reneault infinite-path
groupoid $G_E$. We briefly recall the construction of  $G_E$. Recall from Sections
2.3~and~2.4 of \cite{RSWY} that a boundary path of $E$ is either an infinite path or else
a finite path $\lambda\in E^*$ such that $s(\lambda) E^1$ is empty or has no compact
neighborhood in $E^1$. Since $r : E^1 \to E^0$ is proper and surjective by hypothesis, we
deduce that the boundary-path space of $E$ is precisely $E^\infty$.  The groupoid $G_E$
is defined as follows. The underlying set is
\[G_E:=\bigg\{\big(\alpha\lambda, |\alpha|-|\beta|,\beta\lambda\big)\ \big|\  \alpha,\beta \in
E^*, \lambda \in E^\infty, s(\alpha) = s(\beta) = r(\lambda)\bigg\}\subseteq E^\infty\times\mathbb{Z}\times E^\infty,\]
endowed with the subspace topology. The unit space is $G_E^{(0)}=\{(\lambda, 0, \lambda) : \lambda \in
E^\infty\}$ identified with $E^\infty$. The range and source maps are defined via
\[r,s :G_E\rightarrow G_E^{(0)}\qquad r(\mu,n,\nu) = \mu,\quad s(\mu,n,\nu) = \nu,\]
while the law of composition and inverse operation are given by
\[(\mu,m,\nu)(\nu,n,\lambda) =(\mu,m+n,\lambda),\quad\text{and}\quad (\mu,n,\nu)^{-1} = (\nu, -n, \mu).\]
The topology on $G_E$ has basic compact open bisections
\[Z(U, V) := \bigg\{(\alpha\lambda, |\alpha| - |\beta|, \beta\lambda)\ | \ \alpha \in U, \beta \in
V, \lambda \in E^\infty, r(\lambda) = s(\alpha) = s(\beta)\bigg\}.\]
where $U\subseteq E^n, V\subseteq E^m$ are compact open $s$-sections for some $m,n\in\mathbb{N}$. Note that in this context $r(Z(U,V))=Z(U)$ and $s(Z(U,V))=Z(V)$. Yeend  proves in~\cite{Yeend} that $\cstar(E) \cong \cstar(G_E)$.

We now define a semigroup $S(E)$ associated to a totally disconnected topological graph
$E$.  Again, we are restricting our attention to totally disconnected topological graphs
whose whose range map is proper and surjective. If $K\subseteq E^0$ is compact and $v\in
E^0$, then $r^{-1}(K)\cap s^{-1}(v)$ is the intersection of a compact set and a discrete
set, and is therefore finite. So for each integer $n \ge 0$ we define the function
$\Theta^n(f): E^0\to\mathbb{Z}$ by
\[\Theta^n(f)(v) = \sum_{\lambda \in E^n v} f(r(\lambda)).\]
Note that the sum runs over all $\lambda\in r^{-1}(\supp(f))\cap s^{-1}(v)$ which is a
finite set. The support of $\Theta^n(f)$ is compactly supported since
$\supp(\Theta^n(f))\subseteq s(r^{-1}(\supp(f)))$ which is compact. We therefore have an
operator $\Theta^n: C_c(E^0,\mathbb{Z})\to C_c(E^0,\mathbb{Z})$ which is a positive group
homomorphism and satisfies $\Theta^{n+m}=\Theta^{n}\circ\Theta^{m}$. Observe that if $E$
is a discrete directed graph, then $\Theta^n$ is just multiplication by the transpose
$(A_E^n)^t$ of the $n$th power of the adjacency matrix of $E$. It is useful to see how
$\Theta^n$ operates on characteristic functions. To that end, suppose $U\subseteq E^0$ is
a compact open subset. Then there are finitely many mutually disjoint compact open
$s$-sections $U_j$ such that
\[
UE^n:=\{\lambda\in E^n\ |\ r(\lambda)\in U\}=\bigsqcup_{j}^{k}U_j.
\]
So $\Theta^n(\ch_U)=\sum_{j=1}^{k}\ch_{s(U_j)}$.

\begin{definition}\label{defn:E equiv}
Let $E$ be a totally disconnected topological graph whose range map is proper and surjective.
\begin{enumerate}
\item [(i)] We define a relation $\sim$ on $C_c(E^0, \mathbb{Z})^+$ as follows: $f \sim g$ if there exist $p,q
\in \mathbb{N}$ such that $\Theta^p(f) = \Theta^q(g)$.
\item [(ii)] Define the relation $\approx$ on $C_c(E^0, \mathbb{Z})^+$ by $f \approx g$ if there exist finitely many pairs
$(f_i, g_i)_{i=1}^{n}$ in $C_c(E^0, \mathbb{Z})^+ \times C_c(E^0, \mathbb{Z})^+$ satisfying
\[f \sim\sum_{i=1}^n f_i,\quad g \sim \sum_{i=1}^n g_i\quad\text{and}\quad  f_i \sim g_i\quad\text{for each $i$}.\]
\end{enumerate}
\end{definition}

\begin{lemma}\label{lem:E equiv}
Let $E$ be a totally disconnected topological graph whose range map is proper and
surjective. The relations $\sim$ and $\approx$ are equivalence relations. If $f \approx
g$ then there exist finitely many compact open sets $U_i \subseteq E^0$, and natural
numbers $p, q$ and $q_i$ such that $\Theta^p(f) = \sum_i \Theta^{q_i}(\ch_{U_i})$ and
$\Theta^q(g) = \sum_i \ch_{U_i}$.
\end{lemma}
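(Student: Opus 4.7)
My plan is to handle $\sim$ first, then derive the ``moreover'' canonical form directly from the definition of $\approx$, and finally use that canonical form together with the Riesz refinement property of $C_c(E^0,\mathbb{Z})^+$ to deduce transitivity of $\approx$ (reflexivity and symmetry of $\approx$ being immediate, by taking the single pair $(f,f)$ and by swapping the coordinates of each pair, respectively). For $\sim$: reflexivity uses $p=q=0$, symmetry is immediate from the defining identity, and for transitivity, if $\Theta^{p_1}(f)=\Theta^{q_1}(g)$ and $\Theta^{p_2}(g)=\Theta^{q_2}(h)$, then applying $\Theta^{p_2}$ to the first and $\Theta^{q_1}$ to the second and using the semigroup law $\Theta^{p+q}=\Theta^p\Theta^q$ gives $\Theta^{p_1+p_2}(f)=\Theta^{p_2+q_1}(g)=\Theta^{q_1+q_2}(h)$.

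For the canonical form, suppose $f\approx g$ via pairs $(f_i,g_i)_{i=1}^n$, so that there exist $P,Q,P',Q'\in\mathbb{N}$ with $\Theta^P(f)=\Theta^Q(\sum_i f_i)$ and $\Theta^{P'}(g)=\Theta^{Q'}(\sum_i g_i)$, and naturals $r_i,s_i$ with $\Theta^{r_i}(f_i)=\Theta^{s_i}(g_i)$. First I would inflate: since we may replace $(r_i,s_i)$ by $(r_i+k,s_i+k)$ for any $k$, setting $R:=\max_i r_i$ and $S_i:=s_i+R-r_i$ we have $\Theta^R(f_i)=\Theta^{S_i}(g_i)$ for every $i$. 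Next I would decompose each $g_i$ as a finite sum $g_i=\sum_k \ch_{B_{i,k}}$ of characteristic functions of compact open subsets of $E^0$, and then for each $B_{i,k}$ partition $B_{i,k}E^{Q'}$ into finitely many compact open $s$-sections $C_{i,k,l}$. Writing $U_{i,k,l}:=s(C_{i,k,l})$, the behaviour of $\Theta^{Q'}$ on characteristic functions gives $\Theta^{Q'}(\ch_{B_{i,k}})=\sum_l \ch_{U_{i,k,l}}$, and hence
\[
\Theta^{P'}(g)=\sum_i \Theta^{Q'}(g_i)=\sum_{i,k,l}\ch_{U_{i,k,l}}.
\]
Applying $\Theta^{Q'}$ to $\Theta^R(f_i)=\Theta^{S_i}(g_i)$, then $\Theta^Q$, and summing over $i$, the identity $\Theta^P(f)=\Theta^Q(\sum_i f_i)$ yields $\Theta^{P+R+Q'}(f)=\sum_{i,k,l}\Theta^{Q+S_i}(\ch_{U_{i,k,l}})$. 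Setting $p:=P+R+Q'$, $q:=P'$, and $q_{i,k,l}:=Q+S_i$ gives the required canonical form.

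For transitivity of $\approx$, suppose $f\approx g\approx h$. Using the canonical form twice I obtain compact open sets $U_\alpha,V_\beta\subseteq E^0$ and naturals $p_1,q_1,a_\alpha,p_2,q_2,b_\beta$ with
\[
\Theta^{p_1}(f)=\sum_\alpha \Theta^{a_\alpha}(\ch_{U_\alpha}),\quad \Theta^{q_1}(g)=\sum_\alpha \ch_{U_\alpha},\quad \Theta^{p_2}(g)=\sum_\beta \Theta^{b_\beta}(\ch_{V_\beta}),\quad \Theta^{q_2}(h)=\sum_\beta \ch_{V_\beta}.
\]
Applying $\Theta^{p_2}$ to the second equation and $\Theta^{q_1}$ to the third yields, after expanding each $\Theta^{p_2}(\ch_{U_\alpha})$ and $\Theta^{q_1+b_\beta}(\ch_{V_\beta})$ as a sum of characteristic functions $\ch_{A_{\alpha,l}}$ and $\ch_{B_{\beta,m}}$ respectively (using $s$-section partitions as above), an identity $\sum_{\alpha,l}\ch_{A_{\alpha,l}}=\sum_{\beta,m}\ch_{B_{\beta,m}}$ in $C_c(E^0,\mathbb{Z})^+$. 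Since $C_c(E^0,\mathbb{Z})^+$ is the positive cone of a dimension group, the Riesz refinement property produces compact open sets $C_{\alpha,l,\beta,m}$ with $A_{\alpha,l}=\bigsqcup_{\beta,m}C_{\alpha,l,\beta,m}$ and $B_{\beta,m}=\bigsqcup_{\alpha,l}C_{\alpha,l,\beta,m}$ (the refinement elements are characteristic functions because they are dominated by characteristic functions of compact open sets). Let $B:=\max_\beta b_\beta$. I then set
\[
F_{\alpha,l,\beta,m}:=\Theta^{a_\alpha}(\ch_{C_{\alpha,l,\beta,m}}),\qquad H_{\alpha,l,\beta,m}:=\Theta^{B-b_\beta}(\ch_{C_{\alpha,l,\beta,m}}).
\]
A direct check shows $\Theta^{B-b_\beta}(F_{\alpha,l,\beta,m})=\Theta^{a_\alpha}(H_{\alpha,l,\beta,m})$, so $F\sim H$ componentwise, while summing the defining equations gives $\sum F=\Theta^{p_1+p_2}(f)$ and $\sum H=\Theta^{q_2+q_1+B}(h)$, so $f\sim\sum F$ and $h\sim\sum H$. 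Hence $f\approx h$.

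The main obstacle is the mismatch of the ``shift exponents'' $r_i,s_i,a_\alpha,b_\beta$: these depend on the index and cannot be made uniform, so the inflation trick (bringing all $r_i$ up to a common maximum $R$) and the careful bookkeeping in the Riesz refinement step, which leverages the fact that refinements of characteristic functions in $C_c(E^0,\mathbb{Z})^+$ are again characteristic functions, are the crucial technical ingredients.
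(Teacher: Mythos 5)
Your proposal is correct and follows essentially the same route as the paper's proof: uniformize the shift exponents by taking a maximum to obtain the canonical form, then prove transitivity of $\approx$ by applying the canonical form to both $f\approx g$ and $g\approx h$, equating the two resulting expressions for a common $\Theta$-power of $g$, and invoking Riesz refinement in $C_c(E^0,\mathbb{Z})^+$ to reassemble matching pairs. The only differences are cosmetic bookkeeping (your explicit $s$-section partitions where the paper simply writes elements of $C_c(E^0,\mathbb{Z})^+$ as sums of characteristic functions, and your extra uniformizing exponent $B$), and all the computations check out.
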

\begin{proof}
It is clear that both $\sim$ and $\approx$ are symmetric and reflexive. To see that $\sim$ is transitive, if $f\sim g \sim h$, say $\Theta^m(f) = \Theta^n(g)$ and $\Theta^p(g) = \Theta^q(h)$, then
$\Theta^{m+p}(f) = \Theta^{n+p}(g) = \Theta^{n+q}(h)$, giving $f \sim h$.

To see that $\approx$ is transitive, we first prove the final statement of the lemma.
If $f \approx g$ there are finitely many pairs $(f_i, g_i)_{i=1}\in C_c(E^0, \mathbb{Z})^+ \times C_c(E^0, \mathbb{Z})^+$, and natural numbers $a,b,c,d,p_i,r_i$ in $\mathbb{N}$ satisfying
\[
\Theta^a(f)=\Theta^b\big(\sum_i f_i\big),\quad \Theta^c(g)=\Theta^d\big(\sum_i g_i\big),\quad\text{and}\quad \Theta^{p_i}(f_i)=\Theta^{r_i}(g_i)\quad\text{for each}\  i.\]
Set $P := \max_i p_i$ and $p := a + d + P$. Then
\[
\Theta^{p}(f)
    = \Theta^{P+d}\bigg(\Theta^b\big(\sum_i f_i\big)\bigg)
    = \sum_i \Theta^{P - p_i + b + d}(\Theta^{p_i}(f_i))
    = \sum_i \Theta^{P - p_i + r_i + b + d}(g_i).
\]
So putting $q_i := P - p_i + r_i$, $q = b+c$ and $h_i = \Theta^{b+d}(g_i)$ for each $i$, we have
\[
\Theta^p(f) = \sum_i \Theta^{q_i}(h_i)\quad\text{and}\quad\Theta^q(g) =\Theta^{b}(\Theta^c(f)) = \Theta^b\bigg(\Theta^d\big(\sum_i g_i\big)\bigg) = \sum_i h_i.
\]
Now writing each $h_i = \sum_{j=1}^{m_i} \ch_{U_j}$, we obtain
\[
\Theta^p(f) = \sum_i \sum_{j \le m_i} \Theta^{q_i}(\ch_{U_i})
    \quad\text{and}\quad
\Theta^q(g) = \sum_i \sum_{j \le m_i} \ch_{U_i},
\]
so reindexing gives the final statement of the lemma.

Now suppose that $f \approx g \approx h$. By the preceding paragraph, we can choose
finitely many characteristic functions $g_1, \dots, g_M$ and $h_1, \dots, h_N$ of compact
open subsets of $E^0$, and natural numbers $p, q, a, b, q_i$ and $a_j$ such that
$\Theta^p(f) = \sum_i \Theta^{q_i}(g_i)$, $\Theta^q(g) = \sum_i g_i$, $\Theta^b(h) =
\sum_j \Theta^{a_j}(h_j)$ and $\Theta^a(g) = \sum_j h_j$.

We therefore have $\sum_i \Theta^a(g_i) = \Theta^{a+q}(g) = \sum_j \Theta^q(h_j)$. So
replacing each $g_i$ by $\Theta^a(g_i)$, $p$ by $p + a$, $q$ by $q + a$, each $h_j$ by
$\Theta^q(h_j)$ and $b$ by $b+q$, we obtain
\[
\Theta^p(f) = \sum_i \Theta^{q_i}(g_i),\quad
    \Theta^b(h) = \sum_j \Theta^{a_j}(h_j),\quad\text{ and }\quad
    \sum_i g_i = \sum_j h_j.
\]
Applying the refinement property to the equality $\sum_i g_i=\sum_j h_j$ we get
characteristic functions $\{k_{i,j}\}_{i,j}$ satisfying
\[g_i=\sum_j k_{i,j},\quad h_j=\sum_i k_{i,j}\]
for every $i$ and $j$. Consequently
\[\Theta^p(f) = \sum_i \Theta^{q_i}(g_i)= \sum_{i,j} \Theta^{q_i}(k_{i,j}),\qquad \Theta^b(h) =\sum_j \Theta^{a_j}(h_j)=\sum_{i,j} \Theta^{a_j}(k_{i,j}).\]
Now for every pair $(i,j)$ we have $\Theta^{q_i}(k_{i,j})\sim\Theta^{a_j}(k_{i,j})$, so by definition $f\approx h$.
%For $x \in E^0$, let $I_x = \{i : g_i(x) = 1\}$ and $J_x = \{j : h_j(x) = 1\}$. For each
%$I \subseteq \{1, \dots, M\}$ and each $J \subseteq \{1, \dots, N\}$, the set $W_{I,J} :=
%\{x : I_x = I\text{ and }J_x = J\}$ is compact and open because the $g_i$ and $h_i$ are
%locally constant. Since the $g_i$ an $h_j$ are characteristic functions, each $|I_x| =
%\sum_i g_i(x) = \sum_j h_j(x) = |J_x|$. Hence $|I| = |J|$ whenever $W_{I,J}$ is nonempty.
%We can therefore fix bijections
%\[\{\tau_{I,J} : I
%\to J \mid W_{I,J} \not= \emptyset\}.\]
%Now for $I,J$ with $W_{I,J}$ nonempty and for $i
%\in I$, we have $g_i|_{W_{I,J}} = \ch_{W_{I,J}} = h_{\tau_{I, J}(i)}|_{W_{I, J}}$. Thus
%\[
%\sum_i g_i
 %   = \sum_{i, I, J} g_i|_{W_{I, J}}
%    = \sum_{i, I, J} h_{\tau_{I,J}(i)}|_{W_{I, J}}
%    = \sum_j h_j.
%\]
%Consequently
%\[
%f \sim \sum_i \Theta^{q_i}(g_i)
%    = \sum_{i, I, J} \Theta^{q_i}(g_i|_{W_{I, J}}),
%\]
%and likewise $h \sim \sum_{i, I, J} \Theta^{a_i}(h_{\tau_{I, J}(i)}|_{W_{I, J}})$. Since each $g_i|_{W_{I, J}} = h_{\tau_{I, J}(i)}|_{W_{I, J}}$, we deduce that each
%$g_i|_{W_{I,J}} \sim h_{\tau_{I, J}(i)}|_{W_{I, J}}$ and may conclude that $f \approx h$.
\end{proof}

\begin{definition}
Let $E$ be a totally disconnected topological graph whose range map is proper and surjective. We define the \emph{type semigroup of $E$} as \begin{equation}
S(E):=C_c(E^0,\mathbb{Z})^{+}/{\approx}
\end{equation}\label{eq:top graph S}
and write $[f]_{E}$ for the equivalence class with representative $f\in C_c(E^0,\mathbb{Z})^{+}$.
\end{definition}

Since the relation $\approx$ clearly respects addition in $C_c(E^0, \mathbb{Z})^+$, we see that $S(E)$ is indeed an abelian monoid under the operation $[f]_E + [g]_E = [f+g]_E$. As usual we give $S(E)$ the algebraic ordering.

Our next goal is to prove that this $S(E)$ coincides with the semigroup $S(G_{E})$ constructed from the ample groupoid $G_E$ associated to $E$.

\begin{proposition}\label{prp:SE=SGE}
Let $E$ be a totally disconnected topological graph whose range map is proper and
surjective. There is an isomorphism of monoids $\tau : S(E) \to S(G_E)$ such that
$\tau([\ch_U]_E) = [\ch_{Z(U)}]_{G_{E}}$ for every compact open $U \subseteq E^0$.
\end{proposition}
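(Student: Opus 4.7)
The plan is to define $\tau$ at the level of representatives via $\tilde\tau : C_c(E^0, \mathbb{Z})^+ \to C_c(E^\infty, \mathbb{Z})^+$, $\tilde\tau(f) := f \circ r$. Because $r : E^1 \to E^0$ is proper and surjective, its extension $r : E^\infty \to E^0$ is continuous and proper, so $\tilde\tau$ is well-defined and additive, and satisfies $\tilde\tau(\ch_U) = \ch_{Z(U)}$ for every compact open $U \subseteq E^0$. I would then show that composing $\tilde\tau$ with the quotient map $C_c(E^\infty, \mathbb{Z})^+ \to S(G_E)$ factors through $\approx$ to yield the desired isomorphism $\tau : S(E) \to S(G_E)$.

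The workhorse for both well-definedness and surjectivity is the observation that for any compact open $s$-section $W \subseteq E^n$ the set $Z(W, s(W))$ is a compact open bisection in $G_E$ with $r(Z(W, s(W))) = Z(W)$ and $s(Z(W, s(W))) = Z(s(W))$, so that $\ch_{Z(W)} \sim_{G_E} \tilde\tau(\ch_{s(W)})$. Partitioning $UE^n$ into compact open $s$-sections $\{W_j\}$ for a compact open $U \subseteq E^0$ yields $Z(U) = \bigsqcup_j Z(W_j)$ and $\Theta^n(\ch_U) = \sum_j \ch_{s(W_j)}$, and summing the above relation over $j$ gives $\tilde\tau(\ch_U) \sim_{G_E} \tilde\tau(\Theta^n(\ch_U))$. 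Extending by linearity shows $\tilde\tau(f) \sim_{G_E} \tilde\tau(\Theta^n(f))$ for every $f$ and every $n$, so $\tilde\tau$ carries the generating relation $\sim$ on $C_c(E^0, \mathbb{Z})^+$ into $\sim_{G_E}$; the sum-of-pairs definition of $\approx$ together with additivity of $\sim_{G_E}$ then give preservation of $\approx$, so $\tau$ descends. Surjectivity is immediate from the same observation: any compact open $A \subseteq E^\infty$ decomposes as $\bigsqcup_j Z(A_j)$ with $A_j \subseteq E^{p_j}$ compact open $s$-sections, and then $[\ch_A]_{G_E} = \tau([\sum_j \ch_{s(A_j)}]_E)$.

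The main obstacle will be injectivity. Suppose $\tilde\tau(f) \sim_{G_E} \tilde\tau(g)$ via compact open bisections $F_1, \dots, F_n$ with $f \circ r = \sum_i \ch_{s(F_i)}$ and $g \circ r = \sum_i \ch_{r(F_i)}$. Using that the cocycle $c : G_E \to \mathbb{Z}$ has clopen level sets and that the basic bisections $Z(U, V)$, with $U \subseteq E^p$ and $V \subseteq E^q$ compact open $s$-sections satisfying $s(U) = s(V)$, form a basis for the topology on $G_E$, I would refine so that each $F_i = Z(U_i, V_i)$ with $U_i \subseteq E^{p_i}$ and $V_i \subseteq E^{q_i}$. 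Set $q := \max_i q_i$. Evaluating $f \circ r = \sum_i \ch_{Z(V_i)}$ on any infinite extension of $\alpha \in E^q$ yields $\sum_i \ch_{V_i E^{q-q_i}}(\alpha) = f(r(\alpha))$, and summing over $\alpha \in E^q v$ for fixed $v \in E^0$ gives $\Theta^q(f) = \sum_i \Theta^{q - q_i}(\ch_{s(V_i)})$; a symmetric computation with $p := \max_i p_i$ yields $\Theta^p(g) = \sum_i \Theta^{p - p_i}(\ch_{s(U_i)})$. Since $\Theta^{q - q_i}(\ch_{s(V_i)}) \sim \ch_{s(V_i)}$ and $\Theta^q(f) \sim f$, and similarly for the $U_i$ and $p$, the sum-of-pairs formulation of $\approx$ delivers $f \approx \sum_i \ch_{s(V_i)}$ and $g \approx \sum_i \ch_{s(U_i)}$. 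The equality $s(U_i) = s(V_i)$ built into the shape of each $Z(U_i, V_i)$ then forces $\sum_i \ch_{s(V_i)} = \sum_i \ch_{s(U_i)}$, so $f \approx g$ by transitivity, completing the argument.
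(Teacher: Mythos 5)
Your proposal is correct, and its overall skeleton matches the paper's proof: the same map $\tilde\tau(f)=f\circ r$, the same relation $\tilde\tau(f)\sim_{G_E}\tilde\tau(\Theta^n(f))$ obtained from the bisections $Z(W,s(W))$, and the same surjectivity argument via decomposing compact open subsets of $E^\infty$ into disjoint cylinders. Where you genuinely diverge is injectivity. The paper first normalizes the refined bisections so that all range-side sections sit in a single $E^p$, uses a multiplicity count ($|\{i:\lambda\in V_i\}|=f(r(\lambda))$ for $\lambda\in\supp(f)E^p$) to identify $\sum_i\ch_{s(V_i)}$ with $\Theta^p(f)$, and then runs a second, separate refinement on the source side to reach $\Theta^q(g)$. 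You instead prove the single identity $\Theta^q(f)=\sum_i\Theta^{q-q_i}(\ch_{s(V_i)})$ directly, without equalizing degrees, apply it symmetrically to $f$ and to $g$, and then link the two sides through the constraint $s(U_i)=s(V_i)$ built into the basic bisections $Z(U_i,V_i)$ (a hypothesis you should state explicitly when you refine the $F_i$, since it is what makes $r(Z(U_i,V_i))=Z(U_i)$ and $s(Z(U_i,V_i))=Z(V_i)$ hold); transitivity of $\approx$ from Lemma~\ref{lem:E equiv} then closes the loop. I checked your key identity: for $\alpha\in E^q v$ one has $f(r(\alpha))=\sum_i\ch_{V_iE^{q-q_i}}(\alpha)$, and summing over $\alpha\in E^q v$ and using that each $V_i$ is an $s$-section gives exactly $\Theta^{q-q_i}(\ch_{s(V_i)})(v)$ for the $i$th term. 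Your route is shorter and more symmetric than the paper's two-stage argument; the only hypotheses it quietly uses are that every finite path admits an infinite extension (which follows from surjectivity of $r$, so that evaluating $f\circ r$ on extensions of $\alpha$ is legitimate) and that compact open subsets of $G_E$ decompose into finitely many disjoint basic bisections, both of which the paper also takes for granted.
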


\begin{proof}
Since $r: E^\infty\to E^0$ is proper and surjective, the map $\tilde\tau : C_c(E^0, \mathbb{Z})^+ \to C_c(G_E^{(0)}, \mathbb{Z})^+$ defined by $\tilde\tau(f)=f\circ r$ is a well-defined, injective monoid homomorphism. One immediately observes that $\tilde\tau(\ch_U)=\ch_{U}\circ r=\ch_{Z(U)}$ for every compact open $U\subseteq E^0$.

To see that $\tilde\tau$ descends to $\tau : S(E) \to S(G_E)$, we must show that if $f
\approx g$, then $\tilde\tau(f) \sim_{G_E} \tilde\tau(g)$. We first show that $\tilde\tau(f)
\sim_{G_E} \tilde\tau(\Theta^n(f))$ for any $f \in C_c(E^0, \mathbb{Z})^+$ and $n \in
\mathbb{N}$. Since $\Theta^n$ and $\tilde\tau$ are additive, and $\sim_{G_E}$ respects addition, it suffices to consider $f=\ch_{U}$ for a compact open subset $U\subseteq E^0$. We write $UE^n=\bigsqcup_{j}^{k}U_j$ where the $U_j\subseteq E^n$ are compact open $s$-sections. Also set as bisections in $G_E$, $F_j:=Z(U_j, s(U_j))$. We then have
\begin{align*}
\tilde\tau(\Theta^n(\ch_U))&=\tilde\tau\bigg(\sum_{j=1}^{k}\ch_{s(U_j)}\bigg)=\sum_{j=1}^{k}\tilde\tau(\ch_{s(U_j)})=\sum_{j=1}^{k}\ch_{Z(s(U_j))}\\&=\sum_{j=1}^{k}\ch_{s(F_j)}\sim_{G_E}\sum_{j=1}^{k}\ch_{r(F_j)}=\sum_{j=1}^{k}\ch_{Z(U_j)}=\ch_{\sqcup_{j}^{k}Z(U_j)}=\ch_{Z(U)}=\tilde\tau(\ch_{U})
\end{align*}
%Indeed, given $f$ and $p$, for each of the finitely many values $n$ in the
%image of $f$, partition $f^{-1}(n)E^p$ into finitely many pairwise disjoint compact open
%$s$-sections $V_{n,1}, \dots, V_{n, k_n}$. Let $V_1, \dots, V_l$ be a listing of the sets
%$\{V_{n,i} : n \in \operatorname{image}(f), i \le k_n\}$ with each set $V_{n,i}$ listed
%$n$ times. Then $f = \sum_{j=1}^l \ch_{r(V_j)}$, and $\Theta^p(f) = \sum_j \ch_{s(V_j)}$.
%Hence $\tilde\tau(f) = \sum^l_{j=1} \ch_{r(Z(V_j, s(V_j)))}$ and $\tilde\tau(\Theta^p(f))
%= \sum^l_{j=1} \ch_{s(Z(V_j, s(V_j)))}$, giving $\tilde\tau(f) \sim
%\tilde\tau(\Theta^p(f))$.
Since $\sim_{G_E}$ is an equivalence relation on $C_c(G_E^{(0)}, \mathbb{Z})^+$, we deduce that
if $f \sim g$ in $C_c(E^0, \mathbb{Z})^+$ then $\tilde\tau(f) \sim_{G_E} \tilde\tau(g)$. Now
suppose that $f \approx g \in C_c(E^0, \mathbb{Z})^+$, then there exist finitely many pairs
$(f_i, g_i)_{i=1}^{n}$ in $C_c(E^0, \mathbb{Z})^+ \times C_c(E^0, \mathbb{Z})^+$ satisfying
\[f \sim\sum_i^n f_i,\quad g \sim \sum_i^n g_i\quad\text{and}\quad  f_i \sim g_i\quad\text{for each $i$}.\]
Then
\[\tilde\tau(f)\sim_{G_E}\tilde\tau\bigg(\sum_i f_i\bigg)=\sum_i \tilde\tau(f_i)\sim_{G_E}\sum_i \tilde\tau(g_i)=\tilde\tau\bigg(\sum_i g_i\bigg)\sim_{G_E}\tilde\tau(g).\]
%The final statement of
%Lemma~\ref{lem:E equiv} gives finitely many $h_i \in C_c(E^0, \mathbb{Z})_+$, and natural
%numbers $p, q$ and $q_i$ such that $\Theta^p(f) = \sum_i \Theta^{q_i}(h_i)$ and such that
%$\Theta^q(g) = \sum_i h_i$. By the preceding paragraph, we have $\tilde\tau(f) \sim
%\sum_i \tilde\tau(\Theta^{q_i}(h_i))$ and $\tilde\tau(g) \sim \sum_i h_i$. So since
%$\sim$ is an equivalence relation, we just have to show that $\sum_i
%\tilde\tau(\Theta^{q_i}(h_i)) \sim \tilde\tau(g) \sim \sum_i h_i$. The preceding
%paragraph also shows that each $\tilde\tau(\Theta^{q_i}(h_i)) \sim \tilde\tau(h_i)$, so
%we can choose compact open bisections $U_{i,1}, \dots U_{i,k_i}$ such that
%$\tilde\tau(\Theta^{q_i}(h_i)) = \sum_{j} \ch_{r(U_{i,j})}$ and $\tilde\tau(h_i) = \sum_j
%\ch_{s(U_{i,j})}$. Hence
%\[
%\sum_i \tilde\tau(\Theta^{q_i}(h_i))
 %   = \sum_{i,j} \ch_{r(U_{i,j})}
 %   \equiv \sum_{i,j} \ch_{s(U_{i,j})}
  %  = \sum_i \tilde\tau(h_i).
%\]
So $\tilde\tau$ induces a homomorphism $\tau:S(E) \to S(G_E)$ satisfying $\tau([\ch_U]_E)
= [\ch_{Z(U)}]_{G_E}$.

To show surjectivity, it suffices to verify that $[\ch_{A}]_{G_E}$ is in the image of $\tau$ for any compact open $A\subseteq E^\infty$. Since any such $A$ can be written as a finite disjoint union $A=\sqcup_j Z(B_j)$ where the $B_j\subseteq E^{p_i}$ are compact open $s$-sections, we need only show that such $[Z(B)]_{G_E}\in\im(\tau)$ for such an $s$-section $B$. To that end
\[ [\ch_{Z(B)}]_{G_E} = [\ch_{r(Z(B, s(B)))}]_{G_E} = [\ch_{s(Z(B, s(B)))}]_{G_E} = [\ch_{Z(s(B))}]_{G_E} = \tau([\ch_{s(B)}]_E),\]
so $\tau$ is indeed surjective.

Lastly we show that $\tau$ is injective. Suppose $f,g\in C_c(E^0,\mathbb{Z})^+$ and that $\tilde\tau(f) \sim_{G_E} \tilde\tau(g)$. We need to show that $f\approx g$. Choose compact open bisections
$U_i\subset G_E$ such that
\[\tilde\tau(f) = \sum_i \ch_{r(U_i)},\quad\text{and}\quad \tilde\tau(g) = \sum_i\ch_{s(U_i)}.\]
By definition of the topology on $G_E$, we can decompose each $U_i$ as
$U_i = \bigsqcup_{j=1}^{n_i} Z(V_j, W_j)$ where $V_j\subset E^{p_j}$, $W_j\subseteq E^{q_j}$ are compact open $s$-sections for some $p_j, q_j\in\mathbb{N}$, and $s(V_j) =s(W_j)$. By relabeling, we can assume that each $U_i$ is equal to such a $Z(V_i, W_i)$. By taking $p = \max_i p_i$, covering each $s(V_i)E^{p - p_i}$ with mutually
disjoint compact open $s$-sections $\{Y_{i,j} : j \le m_i\}$ and then replacing each
$Z(V_i, W_i)$ with $\{Z(V_i Y_{i,j}, W_i Y_{i,j}) : j \le m_i\}$, we can moreover assume that the $p_i$ are all equal.

Since $\tilde\tau(f)(\lambda x) = f(r(\lambda)) = \tilde\tau(f)(\lambda' y)$ whenever
$\lambda,\lambda' \in E^p$ satisfy $r(\lambda) = r(\lambda')$, we see that for each
$\lambda \in \operatorname{supp}(f)E^p$, we have $|\{i : \lambda \in V_i\}| = f(x)$. It
follows that $\tilde\tau(\ch_{s(V_i)}) = \sum_i \ch_{Z(s(V_i))} =
\tilde\tau(\Theta^p(f))$. We have $s(V_i) = s(W_i)$ for all $i$, and so $\sum_i
\ch_{Z(s(W_i))} = \tilde\tau(\Theta^p(f))$ as well. Each $\ch_{Z(s(W_i))} =
\tilde\tau(\ch_{s(W_i)})$, so it suffices to show that $g \approx \sum_i \ch_{s(W_i)}$.
So, putting $q = \max_i q_i$, it suffices to show that $g \approx \sum_i \Theta^{q -
q_i}(\ch_{s(W_i)})$.

By covering each $s(W_i)E^{q - q_i}$ by mutually disjoint compact open $s$-sections
$Y_{i,j}$, we can write $\sum_i \tilde\tau(\Theta^{q - q_i}(\ch_{s(W_i)})) = \sum_{i,j}
\ch_{s(W_i Y_{i,j})}$. Now the bisections $B_{i,j} = Z(W_i Y_{i,j}, s(Y_{i,j}))$ satisfy
$\tilde\tau\Big(\sum_i \Theta^{q - q_i}(\ch_{s(W_i)})\Big) = \sum_{i,j} \ch_{s(B_{i,j})}$
and $\tilde\tau(g) = \sum_{i,j} \ch_{r(B_{i,j})}$. Since each $W_i Y_{i,j} \subseteq
E^q$, the argument of the preceding paragraph shows that
\[
\tilde\tau\Big(\sum_i \Theta^{q - q_i}(\ch_{s(W_i)})\Big)
    = \sum_{i,j} \ch_{Z(s(B_{i,j}))}
    = \tilde\tau(\Theta^q(g)).
\]
Since $\tilde\tau$ is injective, it follows that $\Theta^q(g) = \sum_i \Theta^{q -
q_i}(\ch_{s(W_i)})$, and so $g \approx \sum_i \Theta^{q - q_i}(\ch_{s(W_i)})$ as
required.
\end{proof}

We conclude with a dichotomy result for totally disconnected topological graphs.

\begin{corollary}
Let $E$ be a totally disconnected topological graph whose range map is proper and surjective. If $\cstar(E)$ is simple and the semigroup $S(E)$ of~\eqref{eq:top graph S} is almost unperforated, then $\cstar(E)$ is either purely infinite or quasidiagonal.
\end{corollary}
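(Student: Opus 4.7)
The plan is to push the problem off to Theorem~\ref{dichotomy}(ii) applied to the infinite-path groupoid $G_E$. By Yeend's result, $\cstar(E)\cong \cstar(G_E)$, and because $G_E$ is a Deaconu--Renault-type groupoid of a topological graph it is amenable, so $\cstar(G_E)=\cstar_r(G_E)$. Since $E^{0}$ is totally disconnected and $r:E^{1}\to E^{0}$ is proper and surjective, the infinite-path space $E^{\infty}$ is totally disconnected and locally compact (as discussed before Proposition~\ref{prp:SE=SGE}), so $G_E$ is an \'etale, ample, Hausdorff groupoid whose unit space $G_E^{(0)}\cong E^{\infty}$ is totally disconnected. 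By Proposition~\ref{prp:SE=SGE}, $S(G_E)\cong S(E)$, and hence $S(G_E)$ is almost unperforated.

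The remaining input for Theorem~\ref{dichotomy} is that $G_E$ is minimal and topologically principal. This is where I would do the real work: I would invoke the standard fact that for an amenable, second-countable, \'etale, Hausdorff groupoid $G$, simplicity of $\cstar_r(G)$ forces $G$ to be both minimal and effective. Minimality is straightforward --- any nontrivial open invariant subset $U\subseteq G_E^{(0)}$ yields a nontrivial ideal of $\cstar_r(G_E)$. Effectiveness (equivalently, topological principality, since $G_E$ is second countable) follows from the uniqueness theorem for groupoid $\cstar$-algebras: if $\mathrm{Iso}(G_E)^{\circ}\neq G_E^{(0)}$ one constructs a proper ideal via a non-canonical character on the isotropy, contradicting simplicity.

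With minimality and topological principality in hand, all hypotheses of Theorem~\ref{dichotomy}(ii) are satisfied (amenability ensures the quasidiagonality conclusion, as in the proof of Corollary~\ref{corollarysfcnp} via \cite{TWW}), and we conclude that $\cstar_r(G_E)\cong \cstar(E)$ is either purely infinite or quasidiagonal, as required.

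The main obstacle, and the only nontrivial point, is the deduction of minimality and topological principality of $G_E$ from simplicity of $\cstar(E)$; everything else is assembly of results already in the paper. A clean way to finish would be to cite the Deaconu--Renault literature (for instance the treatments of simplicity of topological-graph and Deaconu--Renault $\cstar$-algebras) where this implication is recorded, rather than reprove it from scratch.
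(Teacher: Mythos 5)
Your proposal is correct and follows essentially the same route as the paper: transfer almost unperforation via Proposition~\ref{prp:SE=SGE}, deduce minimality and topological principality of $G_E$ from simplicity of $\cstar(G_E)\cong\cstar(E)$ (the paper cites \cite[Theorem~5.1]{BCFS} for exactly this implication), and then apply Theorem~\ref{dichotomy}. The one "nontrivial point" you flag is indeed handled in the paper by citation rather than by a fresh argument, just as you suggest.
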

\begin{proof}
Since $\cstar(G_E) \cong \cstar(E)$ is simple, \cite[Theorem~5.1]{BCFS} shows that $G_E$ is minimal and topologically principal. Proposition~\ref{prp:SE=SGE} shows that $S(G_E)$ is almost unperforated. The result now follows from Theorem~\ref{dichotomy}.
\end{proof}

\end{document}